\documentclass[oneside,english,12pt,reqno]{amsart}
\usepackage[colorlinks=true,linkcolor=cyan,citecolor=magenta]{hyperref}
\usepackage{comment}

\allowdisplaybreaks


\usepackage{units}
\usepackage{mathrsfs}
\usepackage{amstext}
\usepackage{amsthm}
\usepackage{esint}
\usepackage{amsmath}
\usepackage{amssymb}
\usepackage{soul}
\usepackage{cancel}
\newcommand{\R}{\mathbb{R}}
\newcommand{\pa}{\partial}
\newcommand{\eps}{\varepsilon}
\usepackage{a4wide}
\usepackage{enumitem}
\newcommand{\bra}[1]{\left(#1\right)}
\newcommand{\sbra}[1]{\left[#1\right]}

\newcommand{\pt}{\varphi_\tau}
\newcommand{\na}{\nabla}

\newcommand{\lam}{\lambda}

\newcommand{\F}{\mathscr{F}}

\newcommand{\red}{\textcolor{red}}
\newcommand{\blue}{\textcolor{black}}
\newcommand{\wh}{\widehat}

\newcommand{\intO}{\int_{\Omega}}

\newcommand{\intQT}{\int_0^T\int_{\Omega}}

\renewcommand{\L}{\mathscr{L}}
\renewcommand{\H}{\mathscr{H}}
\newcommand{\E}{\mathscr{E}}

\newcommand{\vt}{\vartheta}
\newcommand{\ue}{u^{\eps}}

\newcommand{\sumi}{\sum_{i=1}^{m}}
\newcommand{\sumj}{\sum_{j=1}^{m}}
\newcommand{\sumk}{\sum_{k=1}^{m}}

\newcommand{\LO}[1]{L^{#1}(\Omega)}

\newcommand{\LQ}[1]{L^{#1}(Q_T)}

\setlength\parindent{20pt}

\makeatletter
\numberwithin{equation}{section}
\numberwithin{figure}{section}

\newtheorem{theorem}{Theorem}[section]
\newtheorem{lemma}{Lemma}[section]
\newtheorem{remark}{Remark}[section]
\newtheorem{definition}{Definition}[section]
  
\makeatother

\usepackage{babel}

\newtheorem{proposition}{Proposition}[section]

\author[W.E. Fitzgibbon]{William E Fitzgibbon}
\address{William E Fitzgibbon \hfill\break
	Department of Mathematics, 
	University of Houston, Houston, Texas 77204, USA}
\email{fitz@uh.edu}

\author[J. Morgan]{Jeff Morgan}
\address{Jeff Morgan \hfill\break
	Department of Mathematics, 
	University of Houston, Houston, Texas 77204, USA}
\email{jmorgan@math.uh.edu}

\author[B.Q. Tang]{Bao Quoc Tang}
\address{Bao Quoc Tang \hfill\break
	Institute of Mathematics and Scientific Computing, University of Graz, 
	Heinrichstrasse 36, 8010 Graz, Austria}
\email{quoc.tang@uni-graz.at, baotangquoc@gmail.com}

\author[H.-M. Yin]{Hong-Ming Yin}
\address{Hong Ming Yin\hfill\break
	Department of Mathematics, Washington State University, Pullman, WA, 99164, USA}
\email{hyin@wsu.edu}

\title[Reaction-Diffusion-Advection Systems with Non-smooth Diffusion]{Reaction-Diffusion-Advection Systems with Discontinuous Diffusion and Mass Control}

\begin{document}
	\subjclass[2010]{35A01, 35K57, 35K58, 35Q92}
	\keywords{Reaction-diffusion-advection systems; Non-smooth diffusion coefficients; Mass control; Global existence; $L^p$-energy methods}
\begin{abstract}
	In this paper, we study unique, globally defined uniformly bounded weak solutions for a class of semilinear reaction-diffusion-advection systems. The coefficients of the differential operators and the initial data are only required to be measurable and uniformly bounded. The nonlinearities are quasi-positive and satisfy a commonly called mass control or dissipation of mass property. Moreover, we assume the intermediate sum condition of a certain order. The key feature of this work is the surprising discovery that quasi-positive systems that satisfy an intermediate sum condition automatically give rise to a new class of $L^p$-energy type functionals that allow us to obtain requisite uniform a priori bounds. Our methods are sufficiently robust to extend to different boundary conditions, or to certain quasi-linear systems. We also show that in case of mass dissipation, the solution is bounded in sup-norm uniformly in time. We illustrate the applicability of results by showing global existence and large time behavior of models arising from a spatio-temporal spread of infectious disease.
\end{abstract}

\maketitle
\tableofcontents

\section{Introduction}
\subsection{Problem setting}
Let $1\leq n\in \mathbb N$, and $\Omega\subset\R^n$ be a bounded domain with Lipschitz boundary $\pa\Omega$ such that $\Omega$ lies locally on one side of $\pa\Omega$. \textit{This condition is assumed throughout the current paper}.
Let $2\leq m\in \mathbb N$. We consider the following reaction-diffusion system for vector of concentrations $u = (u_1, u_2, \ldots, u_m)$: for any $i\in \{1,\ldots, m\}$,
\begin{equation}\label{Sys}
\begin{cases}
	\pa_t u_i - \na\cdot(D_i(x,t)\na u_i) + \na\cdot(B_i(x,t) u_i) = F_i(x,t,u), &x\in\Omega, t>0,\\
	u_i(x,t) = 0, &x\in\pa\Omega, t>0,\\
	u_i(x,0) = u_{i,0}(x), &x\in\Omega,
\end{cases}
\end{equation}
where the initial data $u_{i,0}$ is bounded and non-negative, the diffusion matrix $D_i: \Omega\times [0,\infty) \to \R^{n\times n}$ satisfies
\begin{equation}\label{ellipticity}
	\lambda |\xi|^2 \leq \xi^{\top}D_i(x,t)\xi \quad \forall (x,t)\in \Omega\times[0,\infty), \; \forall \xi \in \R^n, \; \forall i=1,\ldots, m,
\end{equation}
for some $\lam>0$, and for each $T>0$,
\begin{equation}\label{bounded_diffusion}
	D_i \in L^{\infty}(\Omega\times(0,T))  \quad\forall i=1,\ldots, m,
\end{equation}
and the drift $B_i: \Omega\times[0,\infty) \to \R^n$ is bounded, i.e.
\begin{equation}\label{bounded_drift}
	{\rm{ess}\sup}_{x,t}|B_i| \leq \Gamma \quad \forall i=1,\ldots, m,
\end{equation}
for some $\Gamma>0$. The nonlinearities $F_i:\Omega\times\R_+\times\R_+^m\to \R$ satisfy the following conditions:
\begin{enumerate}[label=(F\theenumi),ref=F\theenumi]
	\item\label{F1} for any $i=1,\ldots, m$ and any $(x,t)\in\Omega\times\R_+$, $F_i(x,t,\cdot):\R^m \to \R$ is locally Lipschitz continuous uniformly in $(x,t)\in \Omega\times(0,T)$ for any $T>0$;
	\item\label{F2} for any $i=1,\ldots, m$, any $(x,t)\in \Omega\times\R_+$, $F_i(x,t,\cdot)$ is quasi-positive, i.e. $F_i(x,t,u)\ge 0$ for all $u\in\R_+^m$ with $u_i =0$, for all $i=1,\ldots, m$;
	\item\label{F3} there exist $c_1,\ldots, c_m>0$ and $K_1, K_2\in \R$ such that
	\begin{equation*}
		\sumi c_iF_i(x,t,u) \leq K_1\sumi u_i + K_2 \quad \forall (x,t,u)\in \Omega\times\R_+\times \R_+^m;
	\end{equation*}
	\item\label{F4} there exist $K_3>0$, $r>0$, and a lower triangular matrix $A = (a_{ij})$ with positive diagonal entries, and non-negative entries otherwise, such that, for any $i=1,\ldots, m$,
	\begin{equation*}
		\sum_{j=1}^i a_{ij}F_j(x,t,u) \leq K_3\bra{1+\sumi u_i^{r}} \quad \forall (x,t,u)\in \Omega\times\R_+\times\R_+^m
	\end{equation*}
	(we call this assumption {\it intermediate sum of order $r$});
	\item\label{F5} the nonlinearities are bounded above by a polynomial, i.e. there exist $\ell>0$ and $K_4>0$ such that
	\begin{equation*}
		F_i(x,t,u) \leq K_4\bra{1+\sumi u_i^{\ell}}, \quad \forall (x,t,u)\in \Omega\times\R_+\times\R_+^m, \; \forall i=1,\ldots, m.
	\end{equation*}
\end{enumerate}
	The quasi-positivity of nonlinearities in assumption \eqref{F2} has a simple physical interpretation. If the concentration $u_i = 0$, it cannot be consumed in the reaction. Mathematically, this assumption implies that if the initial data is non-negative, then so is the solution, as long as it exists. At first glance, we note that \eqref{F5} implies \eqref{F4} with $r = \ell$. However, our results only place restrictions on the size of $r$ in \eqref{F4}, \textit{while $\ell$ in \eqref{F5} can be arbitrarily large}. In addition, we note that \eqref{F4} does not imply that the vector field grows at most as a power of $r$. It simply implies that $F_1$ is bounded above by a polynomial of degree $r$, and that higher order terms in subsequent $F_j$ have a ``canceling effect". \blue{This assumption holds naturally in many systems arising from chemistry or biology, see e.g. \cite{morgan2020boundedness,morgan2021global}. Roughly speaking, for any reaction/interaction, a gain term for a species stems from a loss term from another species, which results in the same terms in the nonlinearities for these species \textit{but with opposite signs}. This leads to the desired ``canceling effect" described in assumption \eqref{F4}.}

	\medskip
	Our focus is the global existence and uniform boundedness of solutions to \eqref{Sys} and its variance under the previous assumptions\footnote{Note that except for \eqref{F1}, \eqref{F2} and \eqref{F5}, some of our results do not assume \eqref{F3} and \eqref{F4}. Precise assumptions will be explicitly stated in each lemma or theorem.}. The main results, together with their proof methods and our key new ideas are detailed in the following section.

\subsection{State of the art and Motivation}
Reaction-diffusion systems with \eqref{F1}, quasi-positivity condition \eqref{F2} and control of mass \eqref{F3} appear frequently in physical, chemical or biological models, and the study of global well-posedness for such systems has produced an extensive literature in the last four decades, see e.g. \cite{rothe1984global,hollis1987global,fitzgibbon1997stability,morgan1990boundedness,pierre2010global} and references therein. In the spatially homogeneous case, i.e. the diffusion and the advection in \eqref{Sys} are neglected, these three conditions immediately imply the global existence, uniqueness and uniform-in-time bound (in case $K_1=K_2=0$ or $K_1<0$ in \eqref{F3}) of solutions. In case of spatially inhomogeneous systems where the diffusion is present, the situation is much more challenging. In fact, it was shown in \cite{pierre2000blowup} that \eqref{F2} and \eqref{F3} are not enough to prevent solutions to \eqref{Sys} from blowing up (in sup-norm) in finite time. A lot of effort has subsequently been spent on systems satisfying \eqref{F2} and \eqref{F3}, and nonlinearities having polynomial growth, i.e.
\begin{equation*}
	|f_i(u)| \lesssim 1 + |u|^{r} \quad \forall i=1,\ldots, m,
\end{equation*}
for some $r\geq 1$. In \cite{goudon2010regularity}, it was shown under an additional assumption called {\it entropy inequality} that if $r = 3$ and $n=1$, or $r=2$ and $n=2$, then the local bounded solution exists globally. This was later extended for strictly sub-quadratic growth $r<2$ in all dimensions in \cite{morgan1989global,caputo2009global}. Without assuming the entropy inequality, \cite{morgan2004global,canizo2014improved} showed that systems with quadratic nonlinearities, i.e. $r=2$, possess global bounded solutions when $n\leq 2$. This was latter shown also in \cite{pierre2017dissipative}, and improved in \cite{morgan2020boundedness} where the growth condition was replaced by a weaker intermediate sum condition \eqref{F4}. The global existence of quadratic systems in higher dimensions had been open until recently when it was settled in three parallel works \cite{souplet2018global,caputo2019solutions,fellner2020global}, in the first two works the entropy condition was still imposed, and it was removed completely in the last work. There is also work concerning weak solutions. The reader is referred to \cite{pierre2010global} for an extensive survey.

\medskip
We emphasize that most, if not all, of the existing literature consider the case of constant or smooth diffusion coefficients. There is a technical reason. For constant or smooth diffusion coefficients, one can utilize the duality method (see e.g. \cite{pierre2010global,morgan2020boundedness}) to first obtain initial a-priori estimates in $L^p(\Omega\times(0,T))$ and to extend these estimates from one component to another, and then the regularizing effect of the heat operator helps to initiate a bootstrap argument which ultimately leads to boundedness in $L^{\infty}(\Omega\times(0,T))$, and hence global existence. The case of inhomogeneous diffusion coefficients has been studied much less frequently, see e.g. \cite{desvillettes2007global} or \cite{bothe2017global}. Moreover, this work requires smoothness such as continuity or even differentiability of the diffusion coefficients. This motivates the study of the current paper where we investigate the global well-posedness of \eqref{Sys} without assuming any regularity of diffusion coefficients $D_i(x,t)$ other than their ellipticity and boundedness. We also remark that system \eqref{Sys} includes advection, which has been frequently neglected in the literature.

\medskip
In many cases, advection, diffusion or reaction processes take place in highly heterogeneous domains. If the terms in the differential operators and/or the reaction terms have discontinuity, then we cannot expect well-posedness in the classical sense,  but rather weak solutions in $L^p$ spaces. Concerning spatial inhomogeneity, the classical book \cite{LSU68} considered diffractive differential operators, which are piecewise smooth on sub regions of $\Omega$ and satisfy certain compatibility conditions that guarantee continuity of the state variables and their flux but not their gradients.  Strong solutions to systems with diffractive differential operators modelling the spatio-temporal spread of diseases among animal populations distributed in heterogeneous environments are obtained in \cite{fitzgibbon2001mathematical,fitzgibbon2004reaction}. What distinguishes the work at hand from previous studies is that we do not place smoothness, piecewise smoothness, or even continuity conditions on the advection and diffusion coefficients. This high degree of heterogeneity precludes strong $L^p$ solutions and forces us to consider weak solutions instead. 
The lack of regularity renders the aforementioned duality method non-applicable. In this paper, \textit{we overcome this issue by introducing a new family of $L^p$-energy functions} which, in combination with the intermediate sum condition, provides suitable a-priori estimates to obtain global existence and uniform-in-time boundedness of solutions to \eqref{Sys}. Note that the uniform-in-time bound of solutions can be applied to determine the large time behavior of the corresponding system. We believe that our results are applicable in a variety of scenarios. To illustrate, we apply our results to an infectious disease model.
\subsection{Main results and key ideas}
We will introduce the notion of weak solutions which is defined in the following.
\begin{definition}\label{def_weak}
	A vector of non-negative state variables $u = (u_1,\ldots, u_m)$ is called a weak solution to \eqref{Sys} on $(0,T)$ if
	\begin{equation*}
		u_i \in C([0,T];\LO{2})\cap L^2(0,T;H_0^1(\Omega)), \quad F_i(u) \in L^2(0,T;\LO{2}),
	\end{equation*}
	with $u_i(\cdot,0) = u_{i,0}(\cdot)$ for all $i=1,\ldots, m$, and for any test function $\varphi\in L^2(0,T;H_0^1(\Omega))$ with $\pa_t\varphi \in L^2(0,T;H^{-1}(\Omega))$, one has
	\begin{align*}
		&\intO u_i(x,t)\varphi(x,t)dx\bigg|_{t=0}^{t=T} - \intQT u_i\pa_t\varphi dxdt + \intQT D_i(x,t)\na u_i\cdot \na \varphi dxdt\\
		&=  \intQT u_iB_i(x,t)\cdot \na\varphi dxdt + \intQT F_i(u)\varphi dxdt.
	\end{align*}
\end{definition}

Our first main result is the global existence and uniform boundedness of system \eqref{Sys} under, among others, the assumption on control of mass \eqref{F3}.
\begin{theorem}\label{thm1}
	Assume \eqref{ellipticity}, \eqref{bounded_diffusion}, \eqref{bounded_drift}, \eqref{F1}, \eqref{F2}, \eqref{F3}, \eqref{F4} and \eqref{F5}. Assume moreover that
	\begin{equation}\label{growth}
		0\leq r < 1 + \frac{2}{n}.
	\end{equation}
	Then for any non-negative, bounded initial data $u_0\in \LO{\infty}^m$, there exists a unique global weak solution to \eqref{Sys} with $u_i\in L^\infty_{\rm{loc}}(0,\infty;\LO{\infty})$ for all $i=1,\ldots, m$. Moreover, if $K_1< 0$ or $K_1= K_2 = 0$, then the solution is bounded uniformly in time, i.e. 
	\begin{equation}\label{Linftybound}
		{\rm{ess}\sup}_{t\geq 0}\|u_i(t)\|_{\LO{\infty}} < +\infty, \quad \forall i=1,\ldots, m.
	\end{equation}
\end{theorem}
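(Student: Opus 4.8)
The plan is to follow the standard three-step philosophy for quasi-positive systems with mass control—local existence, then a priori $L^p(\Omega\times(0,T))$ bounds, then bootstrap to $L^\infty$—but with the duality method replaced throughout by the new $L^p$-energy functionals advertised in the introduction. First I would establish local existence and uniqueness of a non-negative weak solution on a maximal interval $(0,T_{\max})$. Since the $D_i$ are merely measurable, bounded and elliptic, the scalar operators $\pa_t - \na\cdot(D_i\na\,\cdot\,) + \na\cdot(B_i\,\cdot\,)$ generate well-behaved evolution problems in $L^2$ with the usual parabolic energy estimate; a fixed-point argument in $C([0,\tau];\LO{2}^m)$ using \eqref{F1} gives a local solution, and non-negativity follows from quasi-positivity \eqref{F2} by the standard truncation/Stampacchia argument applied componentwise. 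The continuation criterion is that $T_{\max}<\infty$ forces $\sum_i\|u_i(t)\|_{\LO\infty}\to\infty$ as $t\uparrow T_{\max}$; equivalently, a uniform $L^\infty$ bound on every finite interval yields global existence.

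Next comes the core a priori estimate. Using the control-of-mass assumption \eqref{F3} together with the Dirichlet boundary condition (the advection term integrates to something controlled by $\Gamma$ and $\lam$ via Young's inequality), testing the $i$-th equation with $c_i$ and summing gives an $L^1(\Omega)$-in-space, and then—by the standard duality-free argument using ellipticity—an $L^2(\Omega\times(0,T))$ a priori bound on $\sum_i u_i$; in the dissipative case $K_1<0$ or $K_1=K_2=0$ this bound is uniform in $T$. Then I would invoke the intermediate sum condition \eqref{F4}: applying the lower-triangular matrix $A$, the combination $w_i:=\sum_{j\le i}a_{ij}u_j$ satisfies a differential inequality whose right-hand side is controlled by $K_3(1+\sum_i u_i^r)$. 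Testing a suitable power $w_i^{p-1}$ against this inequality produces the $L^p$-energy functional $\int_\Omega w_i^p\,dx$ (plus a good gradient term $\int_\Omega |\na w_i^{p/2}|^2$ coming from ellipticity), and the growth restriction \eqref{growth}, $r<1+2/n$, is exactly what makes the nonlinear term absorbable via the Gagliardo–Nirenberg–Sobolev inequality applied to $w_i^{p/2}$, bootstrapping the $L^2$ bound up to $L^p$ for every finite $p$, inductively in $i$ through the triangular structure. The polynomial upper bound \eqref{F5} then lets one write $F_i(u)$ in $L^q(\Omega\times(0,T))$ for $q$ large, and a De Giorgi–Nash–Moser / parabolic iteration (valid precisely because it needs only measurability and ellipticity of $D_i$) upgrades $u_i\in L^p$ for all $p$ to $u_i\in L^\infty_{\rm loc}(0,\infty;\LO\infty)$, closing the continuation criterion; in the dissipative case all constants are $T$-independent, giving \eqref{Linftybound}.

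The main obstacle—and the place where the new idea does the real work—is the second step: without the duality method one must construct the $L^p$-energy functionals so that (a) the gradient term genuinely dominates, using only the lower bound \eqref{ellipticity} and not any structure or smoothness of $D_i$, and (b) the induction on $i$ using \eqref{F4} actually closes, i.e. the term $\sum_i u_i^r$ on the right of the intermediate-sum inequality is controlled by the previously-established $L^p$ bounds on $u_1,\dots,u_{i-1}$ together with a small multiple of the top-order energy of $w_i$. Getting the exponents to line up—matching the Sobolev exponent $2(1+2/n)$ against $r$ and tracking how $p$ must grow at each stage—is the delicate bookkeeping; the advection term and the boundary condition, by contrast, contribute only lower-order perturbations absorbed by Young's inequality. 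I would also need to check that the formal testing with $w_i^{p-1}$ is justified at the level of weak solutions, which is handled by the usual truncation/regularization and the already-known local regularity of $u_i$.
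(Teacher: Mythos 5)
Your overall architecture (approximate/local solutions, $L^p$ a priori bounds from \eqref{F3}--\eqref{F4} with $r<1+2/n$, then $L^q$ bounds on $F_i$ via \eqref{F5} and a De Giorgi--Nash--Moser type smoothing that needs only measurable elliptic coefficients, plus a time-localization for \eqref{Linftybound}) matches the paper's strategy. But the step where you actually produce the $L^p$ estimate has a genuine gap, and it is precisely the point where the paper's new idea is needed. You propose to set $w_i=\sum_{j\le i}a_{ij}u_j$ and test the resulting differential inequality with $w_i^{p-1}$, claiming a ``good gradient term $\int_\Omega|\na w_i^{p/2}|^2$ coming from ellipticity.'' This fails when the diffusion matrices differ: the diffusion part of the inequality for $w_i$ is $\sum_{j\le i}a_{ij}\na\cdot(D_j\na u_j)$, which is not of the form $\na\cdot(D\na w_i)$, and testing with $w_i^{p-1}$ produces the quadratic expression $(p-1)\intO w_i^{p-2}\bigl(\sum_k a_{ik}\na u_k\bigr)\cdot\bigl(\sum_j a_{ij}D_j\na u_j\bigr)dx$, which is sign-indefinite. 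Already for $m=2$, $n=1$, $a_{i1}=a_{i2}=1$, $D_1=d_1>d_2=D_2>0$, the pointwise form $(\xi_1+\xi_2)(d_1\xi_1+d_2\xi_2)$ is negative for suitable $(\xi_1,\xi_2)$, so ellipticity of each $D_j$ separately gives you nothing. This is the classical obstruction to treating unequal diffusions by powers of linear combinations, and it is why the paper instead builds $\L_p[u]=\intO\sum_{|\beta|=p}\binom{p}{\beta}\theta^{\beta^2}u^\beta dx$ with the weights $\theta^{\beta^2}$: Lemma \ref{Hp-lem8} shows the diffusion contribution is a block quadratic form whose diagonal blocks carry the large factors $\theta_k^{4\beta_k+4}D_k$, so choosing the $\theta_k$ large makes the $mn\times mn$ form positive definite, while Lemma \ref{auxiliary_functions} guarantees the same $\theta$ keeps the weighted reaction sums $\sum_k\theta_k^{2\beta_k+1}F_k$ under the intermediate sum bound $K(1+\sum_i u_i^r)$. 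Your proposal is missing exactly this mechanism, so the induction ``in $i$ through the triangular structure'' does not close as written.

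Two secondary points. First, the intermediate claim that a ``standard duality-free argument using ellipticity'' yields an $L^2(\Omega\times(0,T))$ bound on $\sum_i u_i$ from \eqref{F3} is unsubstantiated: the known route from mass control to $L^2(Q_T)$ is Pierre's duality estimate, which is exactly what the non-smooth divergence-form coefficients obstruct here. Fortunately it is also unnecessary: the paper goes directly from the $L^\infty(0,T;\LO{1})$ bound (uniform in time when $K_1<0$ or $K_1=K_2=0$) to $L^p$ via Lemma \ref{GN} with $a=1$, which is where the threshold $r<1+2/n$ enters. Second, a fixed-point local theory in $C([0,\tau];\LO{2}^m)$ with only locally Lipschitz $F_i$ is not immediate (local Lipschitzness is an $L^\infty$-type hypothesis); the paper sidesteps this by truncating the nonlinearities as in \eqref{Feps}, obtaining global approximate solutions, proving $\eps$-uniform bounds, and passing to the limit by Aubin--Lions, with uniqueness then following from the $L^\infty$ bound and \eqref{F1}.
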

\begin{remark}\hfill
\begin{itemize}
	\item \blue{When $K_1 = K_2 = 0$, the assumption \eqref{F3} becomes the well known {\normalfont mass dissipation}, which has been considered frequently in the literature. The case $K_1<0$  can occur in chemical reactions when there is some continuous source of reactants. An example is the Gray-Scott system.}
	\item The uniform-in-time bound \eqref{Linftybound} is shown by using the fact that the $L^1(\Omega)$-norm of the solution is bounded uniformly in time, which is inferred from the assumption $K_1<0$ or $K_1 = K_2 = 0$. If the $L^1(\Omega)$-norm can be shown to be bounded uniformly in time by some other way, e.g. using special structures of the system at hand, then the condition $K_1<0$ or $K_1 = K_2 = 0$ can be relaxed.
\end{itemize}
\end{remark}
The proof of Theorem \ref{thm1} is based on a new $L^p$-energy approach. Traditionally, one seeks an energy function of \eqref{Sys} of the form
\begin{equation*}
	\E[u] = \sumi \intO h_i(u_i)dx
\end{equation*}
which is decreasing or at least bounded in time. When $h_i(z) \sim z^p$, it yields an $L^p$-estimate of the solution, which, for $p$ large enough, would eventually lead to bounds in $L^\infty$-norm. Such an approach is, unfortunately, very likely to fail under the general assumptions \eqref{F3}-\eqref{F4}, except for some very special cases. There is another approach called the {\it duality method} (see \cite{pierre2010global,canizo2014improved,morgan2020boundedness}) which has proved very efficient when dealing with systems with {\it constant or smooth diffusion coefficients}. Using this method, one gets from the mass control condition \eqref{F3} an $L^{2+\eps}(\Omega\times(0,T))$-estimate. This initial estimate and another duality argument are sufficient to bootstrap the regularity of the solution, using the intermediate sum condition \eqref{F4} and the growth assumption \eqref{growth}, to eventually obtain bounds in $L^\infty(\Omega\times(0,T))$ which ensure global existence. We refer the reader to \cite{morgan2020boundedness} for more details. However, this method seems not extendable to the case of merely bounded measurable diffusion coefficients, unless some additional regularity assumptions are imposed (see \cite{desvillettes2007global,bothe2017global}).

\medskip
The main idea of the $L^p$-approach in this paper is to look for an energy function consisting of {\it mixed polynomials} of order  $p\in \mathbb N$, with well chosen coefficients, namely
\begin{equation*}
	\L_p[u] = \intO\sum_{\text{deg}(Q) = p} \theta_QQ[u] dx,
\end{equation*}
where $\theta_Q>0$ depends on the \textit{monomial} $Q[u]$. Thanks to the non-negativity of the solution and the choice of $\theta_Q$, $(\L_p[u])^{1/p}$ is an equivalent $L^p$-norm. Of course, the main challenge is to find an algorithm for choosing the coefficients $\theta_Q$ in a manner that it is {\it compatible} to both diffusion and reactions in the sense that the evolution of $\L_p[u]$ is well behaved. In the case at hand, we show that the intermediate sum condition \eqref{F4} allows us to choose monomials of the form
\begin{equation*}
	Q[u] = \prod_{i=1}^{m}u_i^{\beta_i} \text{ for } \beta_i\in \mathbb N_0 \text{ satisfying } \sum_{i=1}^{m}\beta_i = p,
\end{equation*}
and coefficients of the form
\begin{equation*}
	\theta_Q = \frac{p!}{\beta_1!\ldots \beta_m!}\prod_{i=1}^{m}\theta_i^{\beta_i^2}
\end{equation*}
with appropriately chosen $\theta_1, \ldots, \theta_m>0$. We note that preliminary ideas of this $L^p$-energy approach have been used previously, see \cite{malham1998global,kouachi2001existence}. A recent work \cite{morgan2021global} by the second and third authors has also used this method in the context of volume-surface systems with constant diffusion coefficients. \blue{The novelty of the present work is to significantly extend this $L^p$-approach to the case of nonsmooth diffusion (and advection) coefficients. This, in particular, requires non-trivial extensions in studying the properties of $\L_p[u]$ (see Lemmas \ref{Lp-bound} and \ref{Hp-lem8}). Moreover, we show that this method is sufficiently robust to model variants and different boundary conditions.} It remains as an interesting open issue if the choice of the coefficients $\theta_Q$ is purely mathematical or due to deeper structure of the system under consideration.

\medskip
It is worth noting that other conditions can also lead to a priori estimates. For example, the so-called entropy condition (see e.g. \cite{goudon2010regularity,souplet2018global,fischer2015global}) has been employed in a number of papers. In this case, one requires the existence of scalars $\mu_i\in\mathbb{R}$ so that 
\[
\sumi F_i(x,t,u)\left(\log(u_i)+\mu_i\right)\le 0\text{ for all }(x,t,u)\in\Omega\times\mathbb{R}_+\times\mathbb{R}_+^m.
\]
This condition guarantees an $L_1(\Omega)$ a priori estimate for $H(u(\cdot,t))$ where
\[
H(u)=\sum_{i=1}^m u_i\bra{\log u_i - 1 + \mu_i}\quad \text{ for all }\quad u_i\geq 0.
\]
In this case of the entropy condition, we could obtain a priori estimates by multiplying the PDE for $u_i$ in (\ref{Sys}) by $\log(u_i)+\mu_i$, integrating over $\Omega$ and summing the results. Note that this takes advantage of the positivity of the second derivative of $u_i(\log u_i -1 + \mu_i)$. More generally, we could assume the existence of a set $M=\prod_{k=1}^m(\alpha_i,\beta_i)$ where $\alpha_i,\beta_i$ are extended real numbers such that $\alpha_i<\beta_i$ for each $i=1,\ldots, m$, and solutions to (\ref{Sys}) remain in $M$ whenever initial data lies in $M$, a function $H:M\to \mathbb{R}_+$ that is $C^2$ and has the form
\[
H(u)=\sumi h_i(u_i)
\]
where $h_i:(\alpha_i,\beta_i)\to\mathbb{R}_+$ satisfies
\begin{equation}\label{H1}\tag{H1}
\begin{gathered}
h_i''(z) \ge 0 \quad \text{ for all } z\in (\alpha_i,\beta_i),\\
h_i(z) \text{ is bounded implies } z \text{ is bounded},\\
\nabla H(u)\cdot F(x,t,u)\le K_5\sumi h_i(u_i)+K_6\text{ for all }(x,t,u)\in\Omega\times\mathbb{R}_+\times\mathbb{R}_+^m,
\end{gathered}
\end{equation}
for some $K_5, K_6>0$.
Then, analogous to above, we would obtain an $L_1(\Omega)$ a priori estimate for $H(u(\cdot,t))$. In addition, intermediate sum conditions could also be written in the form 
\begin{equation}\label{H2}\tag{H2}
A\begin{pmatrix}h_1'(u_1)F_1(x,t,u)\\\vdots\\h_m'(u_1)F_m(x,t,u)\end{pmatrix}\le K_7\vec{1}\left(\sumi h_i(u_i)+1\right)^r\text{ for all }(x,t,u)\in\Omega\times\mathbb{R}_+\times\mathbb{R}_+^m,
\end{equation}
that would lead to results in the same manner as we obtain from (\ref{F3}) and (\ref{F4}) above. For more information, see \cite{morgan1989global,morgan1990boundedness} in the case of constant diffusion coefficients.

\medskip
\noindent Our next main result is the following theorem.
\begin{theorem}\label{thm2}
	Assume \eqref{ellipticity}, \eqref{bounded_diffusion}, \eqref{bounded_drift}, \eqref{F1}, \eqref{F2}, \eqref{F5}, and \eqref{H1}, \eqref{H2}. Assume moreover that
	\begin{equation*}
		0\leq r < 1 + \frac{2}{n}.
	\end{equation*}
	Then for any non-negative, bounded initial data $u_0\in \LO{\infty}^m$, there exists a unique global weak solution to \eqref{Sys} with $u_i\in L^\infty_{\rm{loc}}(0,\infty;\LO{\infty})$ for all $i=1,\ldots, m$. Moreover, if $K_5< 0$ or $K_5 = K_6= 0$, then the solution is bounded uniformly in time, i.e. 
	\begin{equation*}
	{\rm{ess}\sup}_{t\geq 0}\|u_i(t)\|_{\LO{\infty}} < +\infty, \quad \forall i=1,\ldots, m.
	\end{equation*}
\end{theorem}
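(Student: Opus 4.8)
The plan is to run the $L^p$-energy scheme used for Theorem \ref{thm1}, reading throughout the convex functions $h_i(u_i)$ in place of the powers $u_i$, the sum $H(u)=\sumi h_i(u_i)$ in place of $\sumi c_i u_i$, and $h_i'(u_i)F_i(x,t,u)$ in place of $F_i(x,t,u)$; note that \eqref{F3}--\eqref{F4} are exactly \eqref{H1}--\eqref{H2} in the special case $h_i(z)=z$, so the two arguments are structurally identical. First I would produce, as in Theorem \ref{thm1} and using only \eqref{F1}, \eqref{F2}, \eqref{F5} together with the ellipticity/boundedness hypotheses \eqref{ellipticity}--\eqref{bounded_drift}, a local-in-time weak solution that is unique and non-negative, obtained as a limit of regularized problems on which $u$ remains in $M$ (so that the $C^2$ functions $h_i$ and their derivatives are only ever evaluated at bounded arguments). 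The substantive task is then to establish, for every finite $p$, the a priori bound $\sup_{t\in(0,T)}\|h_i(u_i(\cdot,t))\|_{\LO{p}}<\infty$; granting this, the second line of \eqref{H1} (boundedness of $h_i$ forces boundedness of $u_i$) and the polynomial growth \eqref{F5} let the same bootstrap as in Theorem \ref{thm1} upgrade it to $u_i\in L^\infty_{\mathrm{loc}}(0,\infty;\LO{\infty})$, hence to global existence.

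For the $L^1$ layer I would test the $i$-th equation of \eqref{Sys} by $h_i'(u_i)$ and sum over $i$: convexity $h_i''\ge0$ and ellipticity \eqref{ellipticity} render the diffusion contribution non-positive, the advection terms are absorbed into the diffusive dissipation via an $\eps$-Young splitting using \eqref{bounded_drift}, and the third line of \eqref{H1} yields
\[
\frac{d}{dt}\intO H(u)\,dx \le K_5\intO H(u)\,dx + K_6|\Omega|,
\]
so $\sup_{t\le T}\|H(u(t))\|_{\LO{1}}<\infty$, and this bound is uniform in $t\ge0$ when $K_5<0$ or $K_5=K_6=0$. For the $L^p$ layer I would introduce the mixed-polynomial energy
\[
\L_p[u]=\intO\sum_{\deg(Q)=p}\theta_Q\,Q\bigl[h_1(u_1),\ldots,h_m(u_m)\bigr]\,dx,\qquad \theta_Q=\frac{p!}{\beta_1!\cdots\beta_m!}\prod_{i=1}^m\theta_i^{\beta_i^2},
\]
with the same combinatorial weights and free parameters $\theta_1,\dots,\theta_m>0$ as in Theorem \ref{thm1}; by non-negativity of $u$ and of the $h_i$, $(\L_p[u])^{1/p}$ is comparable to $\|H(u(\cdot,t))\|_{\LO{p}}$, and the analogues of Lemmas \ref{Lp-bound} and \ref{Hp-lem8} must be re-derived in this setting. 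Differentiating $\L_p[u]$, substituting the PDEs and integrating by parts, the diffusion part produces a non-positive dissipation built from $h_i'(u_i)^2\,\nabla u_i^\top D_i\nabla u_i$ and $h_i''(u_i)\,\nabla u_i^\top D_i\nabla u_i$, plus indefinite cross terms proportional to $h_i'(u_i)h_j'(u_j)\,\nabla u_i^\top D_i\nabla u_j$; the advection terms are again absorbed by $\eps$-Young; and the reaction part is organized by induction on $i$ using the lower-triangular bound \eqref{H2}, exactly as \eqref{F4} is used for Theorem \ref{thm1}, so that $\sum_{j\le i}a_{ij}h_j'(u_j)F_j$ is dominated by a constant times $(1+H(u))^r$. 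The upshot is a differential inequality $\frac{d}{dt}\L_p[u]+c\,(\text{diffusive dissipation})\le C_p\intO(1+H(u))^{\,p-1+r}\,dx$, which the growth restriction $r<1+\tfrac2n$ closes through a Gagliardo--Nirenberg--Sobolev interpolation against the $L^1$ bound on $H(u)$, giving $\sup_{t\le T}\L_p[u]<\infty$ (uniformly in $t$ in the dissipative cases).

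The main obstacle is exactly where the non-smoothness of $D_i$ bites: the indefinite cross-gradient terms $h_i'(u_i)h_j'(u_j)\,\nabla u_i^\top D_i\nabla u_j$ in $\frac{d}{dt}\L_p[u]$ cannot be tamed by any regularity of $D_i$ --- only ellipticity and the $L^\infty$ bound are at hand --- so one must check that the algebraic identities behind the weights $\theta_Q$, which are the content of Lemmas \ref{Lp-bound} and \ref{Hp-lem8}, still deliver the required absorption after $u_i$ is replaced by $h_i(u_i)$. This should succeed because the natural diffusive dissipation controlling these cross terms has the shape $\sum_i(\cdots)\,h_i'(u_i)^2\,\nabla u_i^\top D_i\nabla u_i$, so a Cauchy--Schwarz pairing of $h_i'(u_i)\nabla u_i$ against $h_j'(u_j)\nabla u_j$ (weighted by $D_i$) is precisely what is needed, while the convexity $h_i''\ge0$ only adds further non-positive terms. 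A secondary technical subtlety is that $h_i$ is merely $C^2$ on $(\alpha_i,\beta_i)$, so $h_i'(u_i)$ and $h_i''(u_i)$ need not be globally bounded a priori; this is dealt with by performing all of the above estimates on the regularized problems --- on which $u$ is bounded and the box $M$ is invariant --- with constants independent of the regularization, and then passing to the limit.
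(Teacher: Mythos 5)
Your proposal is correct and follows essentially the same route as the paper: the paper simply packages your substitution as the change of variables $v_i=h_i(u_i)$, observes via convexity that $\partial_t v_i-\nabla\cdot(D_i\nabla v_i)\le h_i'(u_i)F_i(x,t,u)$, with \eqref{H1}--\eqref{H2} supplying the mass-control and intermediate-sum conditions for these right-hand sides, and then runs the machinery of Theorem \ref{thm1} verbatim on $v$ (non-negativity of $v$ letting the differential inequality replace equality). In particular your planned re-derivation of the analogues of Lemmas \ref{Lp-bound} and \ref{Hp-lem8} is not needed, since $h_i'(u_i)\nabla u_i=\nabla v_i$ shows the cross-gradient terms you worry about are exactly those already absorbed by the positive-definiteness argument with large $\theta$ in Lemma \ref{Lp-bound}.
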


The following theorem gives a conditional result in the sense that if, by using the specific structure of the system, one can get better a-priori estimates, then the exponent $r$ in the intermediate sum condition \eqref{F4} can be enlarged.
\begin{theorem}\label{thm3}
	Assume \eqref{ellipticity}, \eqref{bounded_diffusion}, \eqref{bounded_drift}, \eqref{F1}, \eqref{F2}, \eqref{F4}, \eqref{F5}. Suppose that there exists either a constant $a\geq 1$ such that
	\begin{equation}\label{La}
		\|u_i\|_{L^{\infty}(0,T;\LO{a})} \leq \F(T), \quad \forall i=1,\ldots, m,
	\end{equation}
	or a constant $b\geq 1$ such that
	\begin{equation}\label{Lb}
		\|u_i\|_{L^{b}(0,T;L^b(\Omega))} \leq \F(T), \quad \forall i=1,\ldots, m,
	\end{equation}
	where $\F: [0,\infty) \to \R_+$ is a continuous function. Assume additionally that
	\begin{equation}\label{assump_r}
		0\leq r < \begin{cases}
			1 + 2a/n, &\text{ in case of } (\ref{La}),\\
			1 + 2b/(n+2), &\text{ in case of } (\ref{Lb}).
		\end{cases}
	\end{equation}
	Then for any non-negative, bounded initial data $u_0\in \LO{\infty}^m$, there exists a unique global weak solution to \eqref{Sys} with $u_i\in L^\infty_{\rm{loc}}(0,\infty;\LO{\infty})$ for all $i=1,\ldots, m$. Moreover, if $\sup_{T\geq 0}\F(T) < +\infty$, then the solution is bounded uniformly in time, i.e. 
	\begin{equation*}
	{\rm{ess}\sup}_{t\geq 0}\|u_i(t)\|_{\LO{\infty}} < +\infty, \quad \forall i=1,\ldots, m.
	\end{equation*}
\end{theorem}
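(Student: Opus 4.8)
The plan is to reduce Theorem \ref{thm3} to the machinery already built for Theorem \ref{thm1} (the $L^p$-energy functional $\L_p[u]$), with the a priori bound \eqref{La} or \eqref{Lb} playing the role that the mass-control estimate coming from \eqref{F3} plays there. First I would establish local well-posedness: under \eqref{ellipticity}, \eqref{bounded_diffusion}, \eqref{bounded_drift}, \eqref{F1}, \eqref{F2} one obtains a unique non-negative local weak solution on a maximal interval $(0,T_{\max})$, with the standard blow-up alternative that if $T_{\max}<\infty$ then $\limsup_{t\to T_{\max}}\sum_i\|u_i(t)\|_{\LO{\infty}}=\infty$. This part is independent of \eqref{F3}, \eqref{F4} and is presumably already recorded in the paper. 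Hence it suffices to show that on any finite interval $[0,T]$ with $T<T_{\max}$ the solution stays bounded in $L^\infty(\Omega\times(0,T))$ by a constant depending only on $T$ (and on $\F$), which rules out finite-time blow-up and, when $\sup_T\F(T)<\infty$, is uniform in $T$.

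The core step is a bootstrap. Starting from the hypothesis, I would interpret the given datum as an \emph{initial} space-time integrability: \eqref{La} gives $u_i\in L^\infty(0,T;\LO{a})$, which by interpolation with $L^2(0,T;H_0^1)$ and the Gagliardo–Nirenberg–Sobolev inequality yields $u_i\in L^{q_0}(Q_T)$ for some $q_0>a$ (concretely $q_0 = a(n+2)/n$ in the parabolic scaling); \eqref{Lb} gives $u_i\in L^b(Q_T)$ directly, so set $q_0=b$. Then I would run the $L^p$-energy estimate: apply the evolution of $\L_p[u]$ exactly as in the proof of Theorem \ref{thm1}, using the intermediate-sum structure \eqref{F4} to absorb the reaction terms and the ellipticity \eqref{ellipticity} plus boundedness of $B_i$ \eqref{bounded_drift} to control the diffusion–advection part (the drift contributes a lower-order term handled by Young's inequality, since $B_i$ is merely bounded this is harmless). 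The mechanism of \eqref{F4} is that $A\,F(u)$ is bounded above componentwise by $K_3(1+\sum u_i^r)$; after differentiating $\L_p[u]$ and organizing terms by monomials, the "bad" contributions are of the form $\int_\Omega (\text{monomial of degree } p-1+r)$, while the parabolic regularization supplies $\int_\Omega |\nabla (u^{p/2})|^2$, i.e. control of $\|u\|_{L^{p(n+2)/n}(Q_T)}$-type norms. The gain per iteration is therefore: from $L^{q_k}(Q_T)$ to $L^{q_{k+1}}(Q_T)$ with $q_{k+1}=\frac{n+2}{n}\big(q_k-(r-1)\big)$ (one must track that $q_k - (r-1)>0$ and that the map $q\mapsto \frac{n+2}{n}(q-(r-1))$ has its fixed point below the target, or rather that iteration escapes to $\infty$). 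This is exactly where the arithmetic condition \eqref{assump_r} enters: $r<1+2a/n$ in case \eqref{La} (equivalently $q_0=a(n+2)/n > \frac{n+2}{n}(r-1)$ with the right slack) and $r<1+2b/(n+2)$ in case \eqref{Lb} guarantee that the sequence $q_k$ is strictly increasing and $q_k\to\infty$, so after finitely many steps $u_i\in L^q(Q_T)$ for $q$ as large as we like; once $q$ exceeds the threshold in \eqref{F5} (namely $q>\frac{n+2}{2}\,\ell$ or so), a final De Giorgi / Moser-type argument or a heat-semigroup $L^p$–$L^\infty$ smoothing estimate — which for merely bounded measurable coefficients is available via the Aronson Gaussian bounds / Nash–Moser theory for $\nabla\cdot(D\nabla\cdot)$ — upgrades this to $u_i\in L^\infty(\Omega\times(0,T))$.

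The main obstacle I expect is \emph{not} the abstract bootstrap but making the $L^p$-energy step work with non-smooth coefficients while keeping track of all constants so that the uniform-in-time conclusion survives when $\sup_T\F(T)<\infty$. Two technical points need care: (i) the test-function $u_i^{p-1}$ (and more generally the derivatives of $\L_p$) must be admissible in Definition \ref{def_weak}, which requires an approximation/truncation argument since a priori $u_i^{p-1}$ need not lie in $L^2(0,T;H_0^1)$ before the estimate is closed — one runs the argument on truncations $T_M(u_i)$ and passes to the limit using monotone/dominated convergence, as is standard for this kind of nonlinear energy estimate; (ii) for the uniform-in-time bound one replaces the estimate on $(0,T)$ by estimates on unit time intervals $(\tau,\tau+1)$ with constants independent of $\tau$ — this is the role of the machinery suggested by the $\iint_{Q_{\tau,\tau+1}}$ notation in the preamble — and combines them with the uniform-in-time control of $\|u_i\|_{\LO{a}}$ (resp. the bound on $\|u_i\|_{L^b(Q_{\tau,\tau+1})}$ uniform in $\tau$) guaranteed by $\sup_T\F(T)<\infty$. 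The De Giorgi-type final step must likewise be run on $(\tau,\tau+2)\to(\tau+1,\tau+2)$ to get a bound independent of $\tau$. Apart from these bookkeeping matters, the proof is a faithful transcription of the proof of Theorem \ref{thm1}, with \eqref{La}/\eqref{Lb} substituted for the $L^{2+\eps}$ estimate extracted there from \eqref{F3}, and \eqref{assump_r} substituted for \eqref{growth}.
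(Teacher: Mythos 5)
Your case \eqref{Lb} argument is essentially workable (the bootstrap $q_{k+1}=\tfrac{n+2}{n}(q_k-(r-1))$ starting from $q_0=b$ diverges exactly when $r<1+\tfrac{2b}{n+2}$, which reproduces \eqref{assump_r}), although the paper proceeds differently: for each fixed $p$ it closes the $\L_p$-estimate in a single step, interpolating $\intQT u_k^{p-1+r}$ between the given $L^b(Q_T)$ bound and an $L^\vartheta(Q_T)$ norm of $u_k^{p/2}$ controlled by parabolic Gagliardo--Nirenberg, and then absorbing by Young's inequality; no iteration over integrability exponents is needed. Also note that the paper does not use a local-existence/blow-up-alternative framework at all: existence comes from the $\eps$-truncated system of Subsection \ref{subsec1} with uniform-in-$\eps$ estimates and an Aubin--Lions passage to the limit, and admissibility of power test functions is automatic there because the approximate solutions are bounded, so your truncation $T_M(u_i)$ device is replaced by the regularization of $F_i$.

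There is, however, a genuine gap in your treatment of case \eqref{La}. You convert $u_i\in L^\infty(0,T;L^a(\Omega))$ into an initial space-time integrability $q_0=a(n+2)/n$ by ``interpolation with $L^2(0,T;H_0^1(\Omega))$'', but that exponent would require $u_i^{a/2}\in L^\infty(0,T;L^2(\Omega))\cap L^2(0,T;H^1(\Omega))$, i.e. control of $\na(u_i^{a/2})$ in $L^2(Q_T)$, which is not available a priori (the weak solution only gives $\na u_i\in L^2(Q_T)$). What the stated interpolation actually yields is roughly $u_i\in L^{2+2a/n}(Q_T)$, and with that starting value the affine bootstrap map need not be expansive: its fixed point is $\tfrac{n+2}{2}(r-1)$, which exceeds $2+2a/n$ precisely when $a>2$ and $r$ is close to the allowed threshold $1+2a/n$, so the iteration can stall in exactly the regime the theorem is meant to cover. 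The repair is to use the $L^\infty(0,T;L^a(\Omega))$ bound inside every $L^p$-energy step rather than only once at the start: this is the role of Lemma \ref{GN}, which for $s=r-1<2a/n$ gives $\intO u_k^{p-1+r}dx\leq \kappa\intO(|\na u_k^{p/2}|^2+u_k^p)dx+C_\kappa$ uniformly in $p\geq 2a$, so that \eqref{e7} immediately yields $\L_p'+\delta\L_p\leq C_{p,T}$ and Gronwall closes the argument for each $p$ separately, with constants uniform in time whenever $\sup_{T\ge0}\F(T)<\infty$. With that substitution your plan aligns with the paper's proof; the remaining ingredients you describe (the $L^p\to L^\infty$ smoothing for the non-smooth operator via \cite{nittka2014inhomogeneous} as in Proposition \ref{pro1}, and the time-shifted cut-offs $\psi_\tau$ on $(\tau,\tau+2)$ for the uniform-in-time bound) are indeed the ones used.
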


\medskip
Our approach is sufficiently robust to extend to other boundary conditions such as Neumann or Robin type, or to certain quasilinear systems. The precise results are stated in the following theorems.
\begin{theorem}\label{thm3_1}
	Consider the system
	\begin{equation}\label{Sys_other_bc}
	\begin{cases}
		\pa_t u_i - \na\cdot(D_i(x,t)\na u_i) = F_i(x,t,u), &x\in\Omega, t>0,\\
		D_i(x,t)\na u_i(x,t)\cdot \nu(x) + \alpha_i u_i(x,t) = 0, &x\in\pa\Omega, t>0,\\
		u_{i}(x,0) = u_{i,0}(x), &x\in\Omega,
	\end{cases}
	\end{equation}
	where $\nu(x)$ is the outward unit normal vector on $\pa\Omega$, and $\alpha_i\geq 0$ for all $i=1,\ldots, m$.
	
	\medskip
	Assume \eqref{ellipticity}, \eqref{bounded_diffusion}, \eqref{F1}, \eqref{F2}, \eqref{F4} and \eqref{F5}. Moreover, assume either \eqref{F3} or \eqref{La} or \eqref{Lb} with
	\begin{equation*}
		0\leq r< \begin{cases}
			1+2/n, &\text{ in case of } \eqref{F3},\\
			1 + 2a/n, &\text{ in case of } \eqref{La},\\
			1 + 2b/(n+2), &\text{ in case of } \eqref{Lb}.
		\end{cases}
	\end{equation*}
	Then for any non-negative, bounded initial data $u_0\in \LO{\infty}^m$, there exists a unique global weak solution to \eqref{Sys_other_bc} (see Definition \ref{def_diff_bc}) with $u_i\in L^\infty_{\rm{loc}}(0,\infty;\LO{\infty})$ for all $i=1,\ldots, m$. Moreover, if $K_1< 0$ or $K_1= K_2 = 0$ in case of \eqref{F3}, or $\sup_{T\geq 0}\F(T)<+\infty$ in case of \eqref{La} or \eqref{Lb}, then the solution is bounded uniformly in time, i.e. 
	\begin{equation*}
	{\rm{ess}\sup}_{t\geq 0}\|u_i(t)\|_{\LO{\infty}} < +\infty, \quad \forall i=1,\ldots, m.
	\end{equation*}
\end{theorem}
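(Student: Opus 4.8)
The plan is to reuse, almost verbatim, the $L^p$-energy machinery developed for Theorems \ref{thm1}--\ref{thm3}; the only structural change is that the Dirichlet condition is replaced by the Robin (or, when some $\alpha_i=0$, Neumann) condition, while the absence of the drift $B_i$ in \eqref{Sys_other_bc} only makes the estimates shorter. As in the proof of Theorem \ref{thm1}, one first passes to a regularized problem: mollify $D_i$ to a smooth uniformly elliptic matrix $D_i^\eps$ satisfying \eqref{ellipticity}, \eqref{bounded_diffusion} with the same constants, and truncate $F$ to a globally Lipschitz, quasi-positive vector field $F^\eps$. Classical parabolic theory yields a nonnegative classical solution $u^\eps$ on a maximal interval, so it suffices to produce $\eps$-uniform a priori bounds and let $\eps\to0$. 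The weak formulation of \eqref{Sys_other_bc} in Definition \ref{def_diff_bc} differs from Definition \ref{def_weak} only by testing against $\varphi\in L^2(0,T;H^1(\Omega))$ and by the extra boundary term $\sum_i\alpha_i\int_0^T\!\!\int_{\pa\Omega}u_i\varphi\,d\sigma\,dt$ on the left-hand side.

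The heart of the argument is the $L^p$-energy estimate for $\L_p[u^\eps]=\intO\sum_{\mathrm{deg}(Q)=p}\theta_Q\,Q[u^\eps]\,dx$ with the coefficients $\theta_Q$ chosen as in the introduction. Differentiating in time and using the equation, the diffusion term produces precisely the quadratic form in $\na u^\eps$ treated in Lemmas \ref{Lp-bound} and \ref{Hp-lem8}, \emph{plus} a boundary contribution $-\sum_i\alpha_i\int_{\pa\Omega}\bra{\partial_{u_i}\L_p}u_i^\eps\,d\sigma$; since $\partial_{u_i}\L_p$ is itself a nonnegative polynomial in the nonnegative vector $u^\eps$ and $\alpha_i\ge0$, this term is $\le0$ and is simply discarded. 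Because no Poincar\'e inequality is available here, the dissipation is controlled by the \emph{full} $H^1$-norm together with Gagliardo--Nirenberg interpolation, exactly as in the interior part of Lemmas \ref{Lp-bound} and \ref{Hp-lem8}, so nothing in those estimates needs to change. The reaction terms are then handled word for word via \eqref{F4}, \eqref{F5} and the stated restriction on $r$, and, combined with a low-order a priori estimate, give an $\eps$-uniform bound $\|u_i^\eps\|_{L^\infty_{\rm loc}(0,\infty;\LO{\infty})}\le C$, uniform in time under the additional hypotheses.

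The required low-order estimate comes in two flavors. Under \eqref{F3}: integrating the $i$-th equation over $\Omega$, the diffusion term integrates to $-\alpha_i\int_{\pa\Omega}u_i^\eps\,d\sigma\le0$ (or to $0$ in the pure Neumann case), so $\frac{d}{dt}\sum_i c_i\|u_i^\eps(t)\|_{\LO 1}\le K_1\sum_i c_i\|u_i^\eps(t)\|_{\LO 1}+K_2|\Omega|$, and Gronwall gives a bound on $[0,T]$ which is uniform in time when $K_1<0$ or $K_1=K_2=0$. Under \eqref{La} or \eqref{Lb} the estimate is assumed outright, with constant $\F(T)$, uniform in time when $\sup_{T\ge0}\F(T)<\infty$. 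Feeding this into the $L^p$-energy iteration above yields the claimed $L^\infty$-bounds. Finally, the $\eps$-uniform bounds --- $u_i^\eps$ bounded in $L^\infty(\Omega\times(0,T))\cap L^2(0,T;H^1(\Omega))$ and $\pa_t u_i^\eps$ bounded in $L^2(0,T;(H^1(\Omega))^{*})$ --- permit passing to the limit by Aubin--Lions compactness and continuity of $F$, producing a weak solution of \eqref{Sys_other_bc}; uniqueness follows from \eqref{F1} and the $L^\infty$-bound by a standard Gronwall estimate on the difference of two solutions, where the boundary term again carries the favorable sign.

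I expect the main obstacle to be exactly the one already present in the Dirichlet case: rigorously justifying the $L^p$-energy computation --- that is, using powers of $u^\eps$ as admissible test functions and controlling $\L_p[u^\eps]$ through the quadratic forms of Lemmas \ref{Lp-bound} and \ref{Hp-lem8} --- which is why the whole argument is carried out at the smooth, regularized level and only afterwards passed to the limit in merely bounded measurable coefficients. The single genuinely new ingredient, the boundary integral generated by the Robin/Neumann condition, turns out to be harmless precisely because $\alpha_i\ge0$ forces it to have the dissipative sign in each of these estimates.
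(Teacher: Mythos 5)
Your proposal follows essentially the same route as the paper: Theorem \ref{thm3_1} is proved there by rerunning the machinery of Theorems \ref{thm1} and \ref{thm3}, with exactly the two observations you make, namely (i) the Robin/Neumann boundary terms generated in the $L^p$-energy computation have the favorable sign because $\alpha_i\ge 0$ and the integrand derivative $\pa_{u_i}\H_p[u]$ is a nonnegative polynomial in the nonnegative solution (your writing $\pa_{u_i}\L_p$ instead of $\pa_{u_i}\H_p$ is a harmless slip), and (ii) the low-order bound is now \emph{easier} than in the Dirichlet case, since $\varphi\equiv 1$ is an admissible test function in Definition \ref{def_diff_bc}; the paper tests with $\varphi\equiv 1$, drops the nonnegative term $\sumi c_i\alpha_i\int_0^T\int_{\pa\Omega}u_i\,d\H^{n-1}dt$, and applies Gronwall, which is precisely your $L^1$-estimate. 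Your remark that Lemma \ref{GN} only requires $w^{p/2}\in H^1(\Omega)$, so that no Poincar\'e inequality is needed, is also the reason the interior estimates of Lemma \ref{Lp-bound} carry over unchanged.

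The one step that does not survive as stated is your approximation scheme: you mollify the $D_i$ and invoke ``classical parabolic theory'' to obtain classical solutions of the truncated Robin problem, but $\pa\Omega$ is only assumed Lipschitz and the initial data are merely $L^\infty$, so classical solvability up to the boundary is not available in this generality. The paper avoids this entirely: it truncates only the nonlinearities as in \eqref{Feps}, keeps $D_i$ bounded measurable, obtains bounded nonnegative \emph{weak} solutions of the truncated system by the Galerkin method (as in Lemma \ref{lem:existence_truncated}), and performs the $L^p$-energy computation at that level, where it is justified because the truncated solutions are bounded and lie in $L^2(0,T;H^1(\Omega))$, so the polynomial test functions are admissible; all constants are uniform in $\eps$ and Aubin--Lions passes to the limit. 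Mollifying $D_i$ is unnecessary (and would add a manageable but superfluous argument to pass to the limit in $D_i^\eps\na u_i^\eps$); replacing ``classical solutions of the mollified problem'' by ``bounded weak solutions of the $F$-truncated problem'' repairs this point and makes your argument coincide with the paper's.
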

\begin{remark}\hfill
	\begin{itemize}
	\item A direct corollary of Theorem \ref{thm3_1} is the global existence and boundedness of systems with Lipschitz nonlinearities. 
	\item 
	The uniform in time bound of solutions in Theorem \ref{thm3_1}, and previous theorems, is also important in numerical analysis. This can allow one to obtain convergence of the numerical scheme uniformly in time, see e.g. \cite{egger2018analysis}.
	\item Theorem \ref{thm3_1} can also be extended to the case of nonlinear boundary conditions, i.e.
	\begin{equation*}
		D_i(x,t)\na u_i(x,t)\cdot \nu(x) + \alpha_i u_i(x,t) = G_i(u), \quad x\in\pa\Omega,
	\end{equation*}
	where the nonlinearities $G_i$ also satisfy an intermediate sum condition. The details are left for the interested reader. We refer to \cite{morgan2021global,sharma2021global} for a related work dealing with constant diffusion coefficients.
	\end{itemize}
\end{remark}

\begin{theorem}\label{thm3_2}
	Consider the system
	\begin{equation}\label{Sys_quasilinear}
		\begin{cases}
		\pa_t u_i - \na\cdot(A_i(x,t,u)\na u_i) = F_i(x,t,u), &x\in\Omega, t>0,\\
		u_i(x,t) = 0, &x\in\pa\Omega, t>0,\\
		u_{i}(x,0) = u_{i,0}(x), &x\in\Omega,
		\end{cases}
	\end{equation}
	where, for each $i=1,\ldots, m$, $A_i:\Omega\times (0,\infty)\times \R_+^n \to \R^{n\times n}$ satisfies the following conditions:
	\begin{itemize}
		\item[(i)] For a.e. $(x,t)\in\Omega\times\R_+$, the map $\R^m\ni \omega \mapsto A_i(x,t,\omega)$ is continuous, for all $i=1,\ldots, m$.
		\item[(ii)] There exists $\widehat{\lam}>0$ such that 
		\begin{equation*}
			\widehat{\lam}|\xi|^2 \leq \xi^\top A_i(x,t,\omega)\xi \quad \forall (x,t,\omega)\in \Omega\times \R_+ \times \R_+^m, \; \forall \xi\in\R^n, \; \forall i=1,\ldots, m.
		\end{equation*}
		\item[(iii)] For each $T>0$,
		\begin{equation*}
		\underset{(x,t,\omega)\in\Omega\times(0,T)\times \R_+^m}{\text{\normalfont ess sup}}|A_i(x,t,\omega)| < +\infty.
		\end{equation*}
	\end{itemize}
	Assume \eqref{F1}, \eqref{F2}, \eqref{F4} and \eqref{F5}. Moreover, assume either \eqref{F3} or \eqref{La} or \eqref{Lb} with
	\begin{equation*}
	0\leq r< \begin{cases}
	1+2/n, &\text{ in case of } \eqref{F3},\\
	1 + 2a/n, &\text{ in case of } \eqref{La},\\
	1 + 2b/(n+2), &\text{ in case of } \eqref{Lb}.
	\end{cases}
	\end{equation*}
	Then for any non-negative, bounded initial data $u_0\in \LO{\infty}^m$, there exists a global weak solution to \eqref{Sys_quasilinear} with $u_i\in L^\infty_{\rm{loc}}(0,\infty;\LO{\infty})$ for all $i=1,\ldots, m$. Moreover, if $K_1< 0$ or $K_1= K_2 = 0$ in case of \eqref{F3}, or $\sup_{T\geq 0}\F(T)<+\infty$ in case of \eqref{La} or \eqref{Lb}, then this solution is bounded uniformly in time, i.e. 
	\begin{equation*}
	{\rm{ess}\sup}_{t\geq 0}\|u_i(t)\|_{\LO{\infty}} < +\infty, \quad \forall i=1,\ldots, m.
	\end{equation*}
	In addition, if the diffusion coefficients satisfy:
	\begin{itemize}
		\item[(iv)] for any $T>0$, the mapping $\overline{\Omega}\times [0,T]\ni (x,t)\mapsto A_i(\cdot,\cdot,\omega)$ is H\"older continuous in each component,
	\end{itemize}
	then the weak solution obtained above is unique.
\end{theorem}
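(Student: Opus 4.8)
\medskip
\noindent\textbf{Proof proposal for Theorem \ref{thm3_2}.} The plan is to reduce \eqref{Sys_quasilinear} to the semilinear theory of Theorems \ref{thm1} and \ref{thm3} (specialized to the absence of advection) by \emph{freezing the diffusion coefficient}, the decisive observation being that all a priori estimates furnished by the $L^p$-energy functional $\L_p[u]$ and the ensuing bootstrap to $L^\infty_{\rm{loc}}$ — i.e. Lemmas \ref{Lp-bound} and \ref{Hp-lem8} together with the arguments in the proofs of Theorems \ref{thm1} and \ref{thm3} — use the diffusion matrices \emph{only} through the ellipticity constant $\widehat{\lam}$ in (ii) and the $L^\infty$-bound in (iii); no regularity of $A_i$ in $(x,t)$ ever enters. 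First I would truncate the reaction: for $k\in\mathbb N$ replace each $F_i$ by a bounded, globally Lipschitz, quasi-positive modification $F_i^{(k)}$ that agrees with $F_i$ on $\{|u|\le k\}$ and retains the relevant subset of \eqref{F3}, \eqref{F4}, \eqref{F5} with constants uniform in $k$. On a fixed interval $(0,T)$ I would set up the map $\Phi:v\mapsto u$ on $L^2(0,T;\LO2)^m$, where $u_i$ is the weak solution of the \emph{decoupled linear} equation $\pa_t u_i-\na\cdot(A_i(x,t,v^+)\na u_i)+L_k u_i=F_i^{(k)}(x,t,v^+)+L_k v_i^+$ with the boundary and initial data of \eqref{Sys_quasilinear} (here $v^+$ is the positive part, $L_k$ the Lipschitz constant of $F_i^{(k)}$, so the right-hand side is non-negative on the cone and a comparison argument gives $u_i\ge0$); this $u$ exists and is unique by the standard linear parabolic theory for bounded measurable coefficients and satisfies energy bounds in $L^2(0,T;H_0^1(\Omega))^m\cap H^1(0,T;H^{-1}(\Omega))^m$ depending only on $\widehat{\lam}$, $T$, $k$ and the data, so a large enough ball is invariant.

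Next I would check the hypotheses of Schauder's fixed point theorem. Aubin--Lions makes $\Phi$ compact into $L^2(Q_T)^m$; for continuity, if $v_n\to v$ in $L^2(Q_T)^m$ with a.e. convergence along a subsequence, then by (i) $A_i(x,t,v_n^+)\to A_i(x,t,v^+)$ a.e., hence — being uniformly bounded by (iii) — strongly in every $L^q(Q_T)$, $q<\infty$; combined with $\na\Phi(v_n)_i\rightharpoonup\na\Phi(v)_i$ weakly in $L^2(Q_T)$ and the strong convergence of $\Phi(v_n)$, this lets me pass to the limit in the weak formulation and identify the limit with $\Phi(v)$. A fixed point $u^{(k)}$ is then a non-negative weak solution of \eqref{Sys_quasilinear} with $F_i$ replaced by $F_i^{(k)}$ on $(0,T)$. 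The crucial step is then to feed $u^{(k)}$ into Lemmas \ref{Lp-bound} and \ref{Hp-lem8} with diffusion coefficient $x\mapsto A_i(x,t,u^{(k)}(x,t))$ — bounded and uniformly elliptic with the same constants $\widehat{\lam}$ and $\sup|A_i|$ — and run the bootstrap of Theorems \ref{thm1}/\ref{thm3} using \eqref{F3} (resp. \eqref{La} or \eqref{Lb}), \eqref{F4}, \eqref{F5} and the restriction on $r$; since the resulting constants depend only on $\widehat{\lam}$ and the bound in (iii), this gives $\|u^{(k)}_i\|_{L^\infty(Q_T)}\le C(T)$ \emph{independently of $k$}. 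Hence for $k\ge C(T)$ the truncation is inactive, $u^{(k)}$ solves \eqref{Sys_quasilinear} on $(0,T)$, and since $C(T)$ does not blow up at finite $T$, carrying out the construction on an exhausting sequence of intervals and extracting a diagonal subsequence (again using strong $L^2_{\rm{loc}}$ compactness to pass to the limit in the principal part) produces a global weak solution with $u_i\in L^\infty_{\rm{loc}}(0,\infty;\LO{\infty})$. The uniform-in-time bound under $K_1<0$ or $K_1=K_2=0$ (resp. $\sup_{T\ge0}\F(T)<\infty$) follows exactly as in those theorems, again because only the ellipticity of $A_i$ is used.

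For uniqueness under (iv) I would first upgrade regularity: since the bounded solution $u_i$ is H\"older continuous (De Giorgi--Nash--Moser applied to the frozen-coefficient equation), the coefficient $A_i(x,t,u(x,t))$ is H\"older in $(x,t)$ by (iv), so parabolic Schauder estimates (or the quasilinear theory of Ladyzhenskaya--Solonnikov--Ural'tseva) give $\na u_i\in L^\infty_{\rm{loc}}$. Given two bounded solutions $u,\widetilde u$ with the same data, set $w_i=u_i-\widetilde u_i$, subtract the equations and test with $w_i$: the reaction difference is controlled by the local Lipschitz bound \eqref{F1} (both solutions being bounded), the principal-part difference $\na\cdot\big((A_i(x,t,u)-A_i(x,t,\widetilde u))\na u_i\big)$ is estimated via the local modulus of continuity of $A_i$ in $\omega$ together with the $L^\infty_{\rm{loc}}$-bound on $\na u_i$, and the coercive term $\widehat{\lam}|\na w_i|^2$ coming from $\int A_i(x,t,\widetilde u)\na w_i\cdot\na w_i$ absorbs the cross terms; Gronwall applied to $\sum_i\|w_i(t)\|_{\LO2}^2$ then forces $w\equiv0$.

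I expect the main obstacle to be the passage to the limit in the quasilinear principal part in the fixed-point step: $A_i$ is only continuous in $\omega$ and merely bounded measurable in $(x,t)$, so it can be neither differentiated nor linearized, and one must instead combine the strong $L^2$ (hence a.e.) compactness of the iterates with the dominated-convergence upgrade $A_i(x,t,v_n^+)\to A_i(x,t,v^+)$ in $L^q$ and the weak $L^2$ convergence of the gradients. A second, lesser point is that (iv) is genuinely needed for uniqueness, since the gradient regularity used in the Gronwall step is unavailable for merely bounded measurable $(x,t)$-dependence; everything else is an adaptation of the established semilinear arguments, which is possible precisely because those arguments never exploited any smoothness of the diffusion coefficients.
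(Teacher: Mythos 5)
Your existence argument is correct but follows a genuinely different route from the paper's. The paper does not use a fixed-point theorem at all: it regularizes the coefficients themselves, replacing $A_i$ by matrices $A_i^\eps(x,t,\omega)$ that are H\"older continuous in all variables, uniformly elliptic with constant $\widehat\lam/2$, and converging a.e.\ to $A_i$, while truncating the reactions by $F_i^\eps = F_i\bigl[1+\eps\sum_j|F_j|\bigr]^{-1}$ as in \eqref{Feps}; existence and uniqueness for each approximate quasilinear problem are then quoted from \cite{palagachev2011quasilinear} and \cite{nittka2013quasilinear}, and the uniqueness of the approximations is what the paper uses to prove non-negativity (as in Lemma \ref{lem:existence_truncated}). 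You instead build the approximate solutions by hand via Schauder's theorem with frozen coefficients, needing only linear theory for bounded measurable coefficients, and you enforce non-negativity structurally through the $v^+$ and $+L_k$ modification of the map, which neatly avoids any appeal to uniqueness of the approximate problem. Both routes then exploit the same key observation, which you state correctly: Lemmas \ref{Lp-bound} and \ref{Hp-lem8} and the bootstrap of Theorems \ref{thm1} and \ref{thm3} see the diffusion only through the ellipticity constant and the $L^\infty$ bound, so the $L^\infty(Q_T)$ estimates are uniform in the approximation parameter, and the limit passage works by combining a.e.\ convergence of the (bounded) coefficients with weak $L^2$ convergence of the gradients --- exactly the paper's limit argument. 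Your cut-off truncation $F_i^{(k)}$ preserves \eqref{F2}--\eqref{F5} with uniform constants, so this part is sound; the paper's truncation buys nothing you lose.

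The uniqueness part, however, has a genuine gap as written. Your Gronwall step requires the bound $|A_i(x,t,u)-A_i(x,t,\widetilde u)|\lesssim |u-\widetilde u|$ (at least locally, on the range of the bounded solutions), but hypotheses (i) and (iv) only give continuity of $A_i$ in $\omega$ and H\"older continuity in $(x,t)$; with a mere modulus of continuity the term $\int \omega(|u-\widetilde u|)^2\,dx$ cannot be absorbed into $\|u-\widetilde u\|_{\LO{2}}^2$ and the differential inequality does not close. Similarly, your regularity upgrade needs the composed coefficient $(x,t)\mapsto A_i(x,t,u(x,t))$ to be H\"older continuous before Schauder theory yields $\na u_i\in L^\infty_{\rm loc}$, and continuity of $A_i$ in $\omega$ composed with H\"older continuity of $u$ only gives continuity, not H\"olderianity, of that coefficient. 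To make your scheme rigorous one needs (local) Lipschitz or H\"older dependence of $A_i$ on $\omega$, or an Osgood-type condition. To be fair, the paper itself is very terse here: it asserts that under (iv) the gradient is bounded in sup-norm and ``the uniqueness follows,'' deferring all details to \cite{nittka2013quasilinear}, whose uniqueness results carry their own structural hypotheses on the dependence of the coefficients on the unknown; so your sketch reproduces the intended mechanism (gradient bound plus an energy/Gronwall argument) but, like the paper, leaves the dependence-on-$\omega$ issue unaddressed, and you should state explicitly what continuity in $\omega$ you are actually using in that step.
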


\subsection{Structure of the paper}
In the next section, we provide the proofs of our main results. In subsection \ref{subs1}, we prove Theorem \ref{thm1} by first considering an approximate system where we regularize the nonlinearities to obtain global approximate weak solutions, then we derive uniform estimates, applying the key idea of the $L^p$-energy functions, and pass to the limit to obtain global existence and uniqueness of \eqref{Sys}. Subsection \ref{subs2} provides the proof of the generalized results in Theorems \ref{thm2} and \ref{thm3}. The last subsection \ref{subs3} gives the proof of Theorem \ref{thm4} concerning different types of boundary conditions or quasilinear systems. Section \ref{sec:applications} is devoted to the application of our results to an infectious disease without lifetime immunity. Finally, in Section \ref{sec:technical} we provide technical proofs concerning the construction of the $L^p$-energy functions that give rise to our uniform bounds.

\section{Proofs}
\subsection{Systems with control of mass: Proof of Theorem \ref{thm1}}\label{subs1}
To show the global existence of \eqref{Sys}, we start with a truncated system where the nonlinearities are regularized to be bounded. It is important that the regularization preserves the properties \eqref{F1}, \eqref{F2}, \eqref{F3}, \eqref{F4} and \eqref{F5}. The next crucial step is to derive uniform estimates for truncated systems where the bound is independent of the truncation. The main idea behind our approach is the development of an $L_{p}$-energy function for $p\ge2$. This allows us to obtain $L^p$-estimates for the approximate solutions for all $2\leq p < +\infty$, which is enough to conclude the boundedness thanks to the polynomial growth \eqref{F5}. The uniform-in-time bound is shown by examining the system on each cylinder $\Omega\times(\tau,\tau+2)$ for $\tau\in \mathbb N$. The final step of passing to the limit is straightforward because of our obtained a-priori estimates.


\subsubsection{Truncated systems}\label{subsec1}
For any $\eps>0$, we consider the truncated system: for all $i=1,\ldots, m$,
\begin{equation}\label{Sys_truncated}
\begin{cases}
	\pa_t \ue_i - \na\cdot(D_i\na \ue_i) + \na\cdot(B_i\ue_i) = F_i^\eps(\ue), &x\in\Omega, t>0,\\
	\ue_i(x,t) = 0, &x\in\pa\Omega, t>0,\\
	\ue_i(x,0) = \ue_{i,0}(x), &x\in\Omega,
\end{cases}
\end{equation}
where
\begin{equation}\label{Feps}
	F_i^\eps(\ue):=  F_i(\ue)\sbra{1+\eps\sumj|F_j(\ue)|}^{-1},
\end{equation}
and $\ue_{i,0}\in \LO{\infty}$ such that $\|\ue_{i,0} - u_{i,0}\|_{\LO{\infty}} \xrightarrow{\eps \to 0} 0$. Note that since $u_{i,0}\in \LO{\infty}$ one can take $\ue_{i,0} = u_{i,0}$ to obtain the global existence of global existence. The approximation $\ue_{i,0}$ we consider here can be relevant for numerical analysis, where the initial data is discretized and approximated by e.g. finite dimensional data.

\begin{lemma}\label{lem:existence_truncated}
	For any fixed $\eps>0$, there exists a global bounded, non-negative weak solution to \eqref{Sys_truncated}.
\end{lemma}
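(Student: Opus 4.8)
The plan is to solve \eqref{Sys_truncated} by a fixed-point argument, exploiting the fact that after truncation the nonlinearities $F_i^\eps$ are globally bounded and (by \eqref{F1}) globally Lipschitz in $u$. First I would fix $T>0$ and set up the Banach space $X_T = C([0,T];\LO{2})^m$ (or the closed subset of non-negative vectors). For a given $w\in X_T$ define $\Phi(w) = u$ componentwise by letting $u_i$ be the unique weak solution of the \emph{linear} scalar problem
\begin{equation*}
	\pa_t u_i - \na\cdot(D_i\na u_i) + \na\cdot(B_i u_i) = F_i^\eps(w), \quad u_i|_{\pa\Omega}=0, \quad u_i(\cdot,0)=\ue_{i,0}.
\end{equation*}
Existence, uniqueness and the estimate $u_i\in C([0,T];\LO2)\cap L^2(0,T;H_0^1(\Omega))$ for this linear equation with merely bounded measurable $D_i$ satisfying \eqref{ellipticity}, bounded $B_i$, and right-hand side in $L^\infty(Q_T)\subset L^2(Q_T)$ is the classical Lions--Ladyzhenskaya theory of linear parabolic equations in divergence form (e.g.\ \cite{LSU68}); I would just cite it. Since $\|F_i^\eps(w)\|_{L^\infty(Q_T)}\le \eps^{-1}$ uniformly in $w$, the energy estimate gives a bound on $\Phi(w)$ in $X_T$ that is independent of $w$, so $\Phi$ maps a fixed ball into itself.

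Next I would prove $\Phi$ is a contraction on $X_{T'}$ for $T'$ small. Given $w,\tilde w$, the difference $u_i-\tilde u_i$ solves the same linear equation with zero initial data and right-hand side $F_i^\eps(w)-F_i^\eps(\tilde w)$; the Lipschitz bound on $F_i^\eps$ (with constant depending on $\eps$) together with the linear energy estimate yields $\|\Phi(w)-\Phi(\tilde w)\|_{X_{T'}} \le C(\eps)\sqrt{T'}\,\|w-\tilde w\|_{X_{T'}}$, a contraction for $T'$ small. Banach's fixed point theorem gives a unique weak solution on $[0,T']$, and since the $X$-bound above does not shrink the existence interval, the solution extends to all of $[0,T]$ and then, $T$ being arbitrary, globally in time. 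Uniqueness of the full (coupled) weak solution follows from the same Gronwall/energy argument applied directly to a difference of two solutions.

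It remains to establish the two qualitative properties: \textbf{non-negativity} and \textbf{boundedness}. For non-negativity I would test the equation for $u_i$ with the negative part $-u_i^- = -(u_i)_-\in L^2(0,T;H_0^1(\Omega))$; using \eqref{ellipticity}, integrating the drift term by parts, and using quasi-positivity \eqref{F2} of $F_i^\eps$ (which truncation preserves, since $F_i^\eps(u)$ has the same sign as $F_i(u)$ and vanishes where $F_i$ does) one gets $\tfrac{d}{dt}\tfrac12\|u_i^-(t)\|_{\LO2}^2 \le C\|u_i^-(t)\|_{\LO2}^2$, whence $u_i^-\equiv 0$ by Gronwall. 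This requires the standard approximation/truncation-function justification that $u_i^-$ is an admissible test function for a weak solution, which I would invoke without detail. For boundedness, note $|F_i^\eps|\le \eps^{-1}$, so each $u_i$ is a weak solution of a linear divergence-form parabolic equation with bounded coefficients and bounded right-hand side; the De Giorgi--Nash--Moser $L^\infty$ bound (again \cite{LSU68}) gives $u_i\in L^\infty(Q_T)$ with a bound depending on $\eps,T,\Omega,\lam,\Gamma$ and $\|\ue_{i,0}\|_{\LO\infty}$. The main (really only) subtlety is the rigorous justification that $u_i^\pm$ are legitimate test functions and the clean bookkeeping of the drift term under that test; everything else is a direct appeal to classical linear parabolic theory, since the truncation has removed all the genuine difficulty of the problem.
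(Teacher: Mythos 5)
Your overall route (freeze the nonlinearity, solve the linear divergence-form problem, contract in $C([0,T];\LO{2})^m$) is a legitimate alternative to the paper's Galerkin construction, but your non-negativity step has a genuine gap. You test the equation for $u_i$ with $-(u_i)_-$ and invoke quasi-positivity of $F_i^\eps$ to get a sign on the reaction term. Quasi-positivity \eqref{F2} only says $F_i(x,t,u)\ge 0$ for $u\in\R_+^m$ with $u_i=0$; it gives no information on the set $\{u_i<0\}$, nor when other components are negative, and in fact $F_i$ (hence $F_i^\eps$) is only defined on $\Omega\times\R_+\times\R_+^m$, so $F_i^\eps(u)$ is not even defined for a sign-changing candidate solution (the same definedness problem already afflicts your map $\Phi(w)$ for $w\in C([0,T];\LO{2})^m$). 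The paper's proof handles exactly this point by the standard modification: it considers the auxiliary system with right-hand side $F_i^\eps(\ue_+)$, where $\ue_+$ is the componentwise positive part. For that system the test with the negative part works, because on $\{\ue_i<0\}$ the $i$-th argument of $F_i^\eps$ is $0$ and all arguments are non-negative, so \eqref{F2} applies and yields $\ue_{i,-}\equiv 0$ by Gronwall; then, since $\ue=\ue_+$, the auxiliary solution solves \eqref{Sys_truncated} and coincides with your fixed point by uniqueness. Without this detour (or an equivalent extension of $F_i^\eps$ off the positive orthant), your Gronwall inequality for $\|u_i^-\|_{\LO{2}}^2$ is not justified.

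A secondary point: the contraction estimate needs $F_i^\eps$ to be Lipschitz on the range of the iterates, but \eqref{F1} only gives \emph{local} Lipschitz continuity, and the truncation \eqref{Feps} makes $F_i^\eps$ bounded by $1/\eps$ without making it globally Lipschitz (its gradient can blow up even though the function is bounded). This is repairable: since the right-hand side is bounded by $1/\eps$, the linear theory gives an a priori $L^\infty(Q_T)$ bound $K=K(\eps,T)$ for $\Phi(w)$, so you should run the fixed point on the set $\{w:\ 0\le w_i\le K\}$ (or further truncate $F_i^\eps$ outside a ball), where the local Lipschitz constant suffices; as written, the claim ``globally Lipschitz by \eqref{F1}'' is not correct.
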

\begin{proof}
	\blue{Since for fixed $\eps>0$, the nonlinearities $F_i^\eps(u^\eps)$ are Lipschitz continuous and are bounded above by $1/\eps$ uniformly in $\ue$, the global existence of a unique weak solution follows from the standard Galerkin method, see e.g. \cite{LSU68}. We leave the details to the reader. It remains to show the non-negativity of $\ue$. Denote by $\ue_{i,+}:= \max\{\ue_i,0\}$ and $\ue_{i,-}:= \min\{\ue_i, 0\}$. We consider the auxiliary system of \eqref{Sys_truncated}
	\begin{equation*}
	\pa_t\ue_i - \na\cdot(D_i\na\ue_i) + \na\cdot(B_i\ue_i) = F_i^{\eps}(\ue_+)
	\end{equation*}
	where $\ue_+ = (\ue_{i,+})_{i=1,\ldots, m}$. Thanks to the uniqueness, it is sufficient to show the non-negativity for the solution of this system. By multiplying this auxiliary system by $\ue_{i,-}$ and using the quasi-positivity assumption \eqref{F2} (recall that this property also holds for $F_i^\eps$), we obtain
	\begin{equation*}
	\frac 12\intO|\ue_{i,-}|^2dx + \lam \intO|\na \ue_{i,-}|^2dx \leq \Gamma\intO|\ue_{i,-}||\na \ue_{i,-}| \leq \frac{\lam}{2}\intO |\na\ue_{i,-}|^2dx + C\int|\ue_{i,-}|^2dx.
	\end{equation*}
	By applying the Gronwall lemma, we get $\ue_{i,-} = 0$ a.e. in $Q_T$, since $\ue_{i,0,-} = 0$. This shows the desired non-negativity.}
\end{proof}

\subsubsection{Uniform-in-$\eps$ estimates}\label{subsec2}
In this subsection, we prove crucial uniform-in-$\eps$ estimates for the solution to the truncated system \eqref{Sys_truncated}. Moreover, we want to emphasize that \textit{all the constants in this subsection are {\it independent of $\eps$}}.

\medskip
We start off with the estimate in $L^\infty(0,T;L^1(\Omega))$.
\begin{lemma}\label{L1-bound}
	Assume \eqref{F1}, \eqref{F2} and \eqref{F3}. Then for any $T>0$, there exists a constant $M_T$ depending on $T, \Omega, \|u_{i,0}\|_{\LO{1}}$ and $c_1,\ldots, c_m, K_1, K_2$ in \eqref{F3} such that
	\begin{equation}\label{desired_L1}
		\sup_{t\in (0,T)}\|\ue_i(t)\|_{\LO{1}} \leq M_T \quad \forall i=1,\ldots, m.
	\end{equation}
\end{lemma}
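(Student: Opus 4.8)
This is the classical mass--control estimate, the only genuinely new point being that $D_i$ is merely bounded and measurable. I would obtain \eqref{desired_L1} by testing the weak formulation of \eqref{Sys_truncated} against (an approximation of) the constant function $1$, weighted by the constants $c_1,\dots,c_m$ from \eqref{F3}, and then applying Gronwall's lemma. First I would record that the truncated nonlinearities still satisfy a mass--control bound: writing $F_i^\eps=F_i(\ue)\,g^\eps$ with $g^\eps:=\bigl(1+\eps\sumj|F_j(\ue)|\bigr)^{-1}\in(0,1]$ and distinguishing the cases $\sumi c_iF_i(\ue)\le0$ and $\sumi c_iF_i(\ue)>0$, one checks that
\begin{equation*}
\sumi c_iF_i^\eps(\ue)=g^\eps\sumi c_iF_i(\ue)\ \le\ K_1^{+}\sumi\ue_i+K_2^{+}\quad\text{on }\Omega\times\R_+,\qquad K_1^{+}:=\max\{K_1,0\},\ K_2^{+}:=\max\{K_2,0\},
\end{equation*}
so \eqref{F3} is preserved (with non-negative constants), which is all the proof needs.

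\textbf{Key steps.} For small $\delta>0$ let $\psi_\delta(x):=\min\{\mathrm{dist}(x,\pa\Omega)/\delta,1\}\in H_0^1(\Omega)\cap L^\infty(\Omega)$, so that $0\le\psi_\delta\le1$, $\psi_\delta\uparrow1$ in $\Omega$, $\na\psi_\delta$ is supported in the boundary strip $S_\delta:=\{0<\mathrm{dist}(\cdot,\pa\Omega)<\delta\}$, and $|\na\psi_\delta|\le\delta^{-1}$. Using $c_i\psi_\delta$ (time independent) as test function for the $i$-th equation of \eqref{Sys_truncated} and summing over $i$ with weights $c_i$ gives, for every $t\in(0,T)$,
\begin{align*}
\sumi c_i\intO\ue_i(t)\psi_\delta\,dx&=\sumi c_i\intO\ue_{i,0}\psi_\delta\,dx-\sumi c_i\int_0^t\!\intO D_i\na\ue_i\cdot\na\psi_\delta\,dx\,ds\\
&\quad+\sumi c_i\int_0^t\!\intO\ue_iB_i\cdot\na\psi_\delta\,dx\,ds+\int_0^t\!\intO\Bigl(\sumi c_iF_i^\eps(\ue)\Bigr)\psi_\delta\,dx\,ds .
\end{align*}
Now I would let $\delta\to0$: the first, second and last terms converge (monotone/dominated convergence, all integrands being in $L^1$) to the same expressions with $\psi_\delta$ replaced by $1$; since $\ue_i\in L^2(0,T;H_0^1(\Omega))$ has zero trace, $\delta^{-1}\!\int_{S_\delta}\ue_i\,dx\to0$ in $L^1(0,T)$ so the advection term vanishes; and the diffusion term converges to a limit $\ell(t)\ge0$, since $\na\psi_\delta$ points into $\Omega$ while $\ue_i\ge0$ (Lemma \ref{lem:existence_truncated}) vanishes on $\pa\Omega$, so that $\ell(t)=-\sumi c_i\int_0^t\!\int_{\pa\Omega}D_i\na\ue_i\cdot\nu\,d\sigma\,ds$ is a non-negative outflow through the Dirichlet boundary. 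Passing to the limit and using $\sumi\ue_i\le(\min_jc_j)^{-1}\sumi c_i\ue_i$ yields, with $y(t):=\sumi c_i\|\ue_i(t)\|_{\LO1}$,
\begin{equation*}
y(t)\ \le\ y(0)+\frac{K_1^{+}}{\min_jc_j}\int_0^ty(s)\,ds+K_2^{+}|\Omega|\,t .
\end{equation*}
Gronwall's lemma then gives $y(t)\le e^{K_1^{+}T/\min_jc_j}\bigl(y(0)+K_2^{+}|\Omega|\,T\bigr)$ on $[0,T]$; since $\|\ue_{i,0}-u_{i,0}\|_{\LO\infty}\to0$ we have $y(0)\le\sumi c_i\|u_{i,0}\|_{\LO1}+1$ for small $\eps$, and $\|\ue_i(t)\|_{\LO1}\le y(t)/\min_jc_j$ gives \eqref{desired_L1} with $M_T$ depending only on $T,\Omega,\|u_{i,0}\|_{\LO1},c_1,\dots,c_m,K_1,K_2$ — in particular independent of $\eps$.

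\textbf{Main obstacle.} Everything above is a direct computation plus Gronwall; the one delicate point is the rigorous verification that $\ell(t)\ge0$ when $D_i$ is only bounded measurable and $\pa\Omega$ is only Lipschitz. I would establish it by a routine approximation: mollify $D_i,B_i$ (preserving \eqref{ellipticity}, \eqref{bounded_diffusion}, \eqref{bounded_drift} with the same $\lam,\Gamma$), smooth $\ue_{i,0}\ge0$, and (if needed) exhaust $\Omega$ by smooth subdomains; for the resulting solutions the conormal flux $D_i\na\ue_i\cdot\nu$ is classically meaningful and $\le0$ on $\pa\Omega$ because $\na\ue_i=(\pa_\nu\ue_i)\nu$ there (tangential derivatives vanish), $\pa_\nu\ue_i\le0$, and $\nu^{\top}D_i\nu\ge\lam>0$; hence $\tfrac{d}{dt}\sumi c_i\intO\ue_i\le\intO\sumi c_iF_i^\eps(\ue)$ for the regularized solutions, and one passes to the limit using the $\eps$-independent (and regularization-independent) bound just derived. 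Alternatively one may invoke the normal-trace theory for the $L^2$ vector field $D_i\na\ue_i$, whose distributional divergence lies in $L^2(0,T;H^{-1}(\Omega))$ by the equation.
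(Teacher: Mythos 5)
Your overall skeleton (truncation preserves \eqref{F3} with $K_1^+,K_2^+$ because the damping factor $g^\eps$ is common to all components; weighted sum; Gronwall; Hardy-type vanishing of the drift term on the strip) is fine, but the decisive step is not proved: the nonnegativity of the limiting diffusion term $\ell(t)$. This is exactly the point the paper flags as the obstruction — it notes that one ``expects'' the flux $D_i\na\ue_i\cdot\nu\le 0$, but that for weak solutions with merely bounded measurable $D_i$ on a Lipschitz domain this is not justified, and $\varphi\equiv 1$ is not an admissible test function. Your two proposed fixes do not close this. The normal-trace route fails as stated: the vector field $D_i\na\ue_i-B_i\ue_i$ has distributional divergence $\pa_t\ue_i-F_i^\eps(\ue)$, which lies only in $L^2(0,T;H^{-1}(\Omega))$, not in a space of measures, so divergence-measure/normal-trace theory does not apply; and even where a generalized normal trace can be defined in a negative-order space on $\pa\Omega$, extracting its \emph{sign} from $\ue_i\ge 0$, $\ue_i|_{\pa\Omega}=0$ is precisely the unproven assertion. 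The regularization route is only sketched and has real gaps: since $\pa\Omega$ is merely Lipschitz, even mollified coefficients do not give $C^1$-up-to-the-boundary solutions, so you must pass to smooth exhausting subdomains $\Omega_k$ — but imposing zero Dirichlet data on $\pa\Omega_k$ changes the problem, and you would have to prove convergence of these solutions to $\ue$ (Mosco-type convergence of $H^1_0(\Omega_k)$ plus uniqueness for the truncated system), handle compatibility of the smoothed initial data at $\pa\Omega_k$, and justify passing the $L^1$ inequality to the limit. None of this is carried out, so as written the proof has a genuine gap at its crucial step.

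For comparison, the paper's proof avoids the boundary entirely by testing the $i$-th equation with $h_\delta(\ue_i)$, where $h_\delta$ is the piecewise-linear approximation of the sign of the positive part. Then the diffusion term equals $\delta^{-1}\int\!\!\int \chi_{\{0<\ue_i<\delta\}} D_i\na\ue_i\cdot\na\ue_i\ge 0$ pointwise by ellipticity \eqref{ellipticity} (no flux or trace is ever needed), the drift term is $O\bigl(\int\!\!\int\chi_{\{0<\ue_i\le\delta\}}|\na\ue_i|\bigr)\to 0$ by dominated convergence, and letting $\delta\to 0$ gives $\intO\ue_i(T)\,dx\le\intO\ue_i(0)\,dx+\int_0^T\!\!\intO F_i^\eps(\ue)\,dx\,dt$ for each $i$, after which the weighted sum, \eqref{F3} and Gronwall conclude as in your last step. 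If you want to keep your route, you would essentially have to supply the missing regularization/normal-trace analysis; the paper's choice of test function is the cheaper way around it.
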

\begin{proof}
	Formally, the desired $L^1$-bound can expected by first multiplying the equation of $u_i$ by $c_i$, summing the resultants, and then integrating the sum in $\Omega\times(0,t)$. Noticing that the solution is non-negative and vanishes on the boundary, we expect the flux $D_i\na \ue_i \cdot \nu \leq 0$. Unfortunately, since we are dealing with weak solutions, this formal procedure is not justified. On the other hand, $\varphi \equiv 1$ is not admissible as a test function since it does not belong to $L^2(0,T;H_0^1(\Omega))$. We therefore have to resort to a different strategy. Let $\delta>0$. We define the continuous (and piecewise smooth) function $h_\delta:\R\to\R$ as
	\begin{equation*}
		h_\delta(s) = \begin{cases}
			1 &\text{ if } s\geq \delta,\\
			s/\delta &\text{ if } 0<s<\delta,\\
			0 &\text{ if } s\leq 0.
		\end{cases}
	\end{equation*}
	We also use the following primitive of $h_\delta$ given by
	\begin{equation*}
		H_{\delta}(s) = 
		\begin{cases}
			s - \delta/2 & \text{ if } s\geq \delta,\\
			s^2/(2\delta) &\text{ if } 0<s<\delta,\\
			0 &\text{ if } s \leq 0.
		\end{cases}
	\end{equation*}
	Taking $h_{\delta}(\ue_i)\in L^2(0,T;H_0^1(\Omega))$ as a test function in \eqref{Sys_truncated} yields
	\begin{equation}\label{h1}
	\begin{aligned}
		\int_0^T\langle \pa_t \ue_i, h_\delta(\ue_i)\rangle dt + \intQT D_i\na \ue_i\cdot \na(h_\delta(\ue_i))dxdt - \intQT B_i\ue_i\cdot\na(h_\delta(\ue_i))dxdt\\
		= \intQT F_i^{\eps}(\ue)h_{\delta}(\ue_i)dxdt.
	\end{aligned}
	\end{equation}
	Since $\pa_t \ue_i \in L^2(0,T;H^{-1}(\Omega))$ and $h_{\delta}(\ue_i)\in L^2(0,T;H_0^1(\Omega))$ we have
	\begin{equation*}
		\int_0^T\langle \pa_t\ue_i, h_\delta(\ue_i)\rangle dt = \intO H_\delta(\ue_i(\cdot,T))dx - \intO H_{\delta}(\ue_i(\cdot,0))dx.
	\end{equation*}
	Due to $\na(h_\delta(\ue_i)) = \chi_{\{|\ue_i|\leq \delta\}}\delta^{-1}\na \ue_i$ and \eqref{ellipticity} we have
	\begin{equation*}
		\intQT D_i\na \ue_i \cdot \na(h_\delta(\ue_i))dxdt \geq \lam\intQT \chi_{\{|\ue_i|\leq \delta\}}\delta^{-1}|\na \ue_i|^2dxdt \geq 0,
	\end{equation*}
	and, using \eqref{bounded_drift},
	\begin{align*}
		\left|\intQT B_i\ue_i \cdot \na(h_\delta(\ue_i))dxdt\right| &\leq \delta^{-1}\intQT |B_i||\ue_i|\chi_{\{|\ue_i|\leq \delta\}}|\na \ue_i| dxdt\\
		&\leq \Gamma\intQT \chi_{|\ue_i|\leq\delta}|\na \ue_i|dxdt.
	\end{align*}
	Therefore, it follows from \eqref{h1} that
	\begin{equation}\label{h1_1}
		\intO H_\delta(\ue_i(\cdot,T))dx \leq \intO H_\delta(\ue_i(\cdot,0))dx + \Gamma\intQT \chi_{|\ue_i|\leq \delta}|\na \ue_i|dxdt + \intQT F_i^\eps(\ue)h_\delta(\ue_i)dxdt.
	\end{equation}
	Letting $\delta\to 0$, and using $\ue_i \in L^2(0,T;H_0^1(\Omega))$ we get
	\begin{equation}\label{h1_2}
		\limsup_{\delta\to 0}\intQT \chi_{|\ue_i|\leq \delta}|\na \ue_i|dxdt =0.
	\end{equation}
	{\color{black}
	We will show that
	\begin{equation*}
		\lim_{\delta \to 0}\intQT F_i^\eps(\ue)h_{\delta}(\ue_i)dxdt \leq \intQT F_i^\eps(\ue)dxdt.
	\end{equation*}
	Indeed, we write \begin{equation}\label{h1_3}
	F_i^{\eps}(\ue)h_\delta(\ue_i) = F_i^{\eps}(\ue)\mathbf{1}_{\{\ue_i > \delta\}} + \frac{1}{\delta}F_i^{\eps}(\ue)\ue_i \mathbf{1}_{\{0\leq \ue_i \leq \delta\}} =: \psi^{\delta}(x,t) + \varphi^{\delta}(x,t).
	\end{equation}
	It is clear that $\lim_{\delta\to0}\varphi^{\delta}(x,t) = 0$ for a.e. $(x,t)\in\Omega\times(0,T)$. Moreover, $|\varphi^{\delta}(x,t)| \leq |F_i^{\eps}(\ue(x,t))| \leq 1/\eps$ a.e. $(x,t)$ due to \eqref{Feps}. Thus, it holds $$\lim_{\delta\to 0}\int_0^T\int_{\Omega}\varphi^{\delta}(x,t)dxdt = 0.$$ For $(x,t)$ such that $\ue_i(x,t) > 0$, we have $\lim_{\delta \to 0}\psi^{\delta}(x,t) = F_i^{\eps}(\ue)$, while if $\ue_i(x,t) = 0$, then $\lim_{\delta \to 0}\psi^{\delta}(x,t) = 0 \leq F_i^{\eps}(\ue(x,t))$ thanks to \eqref{F2} and \eqref{Feps}. Hence, $$\lim_{\delta\to 0}\int_0^T\int_{\Omega}\psi^{\delta}(x,t)dxdt \leq \int_0^T\int_{\Omega}F_i^\eps(\ue)dxdt.$$
	Therefore, we can let $\delta \to 0$ in \eqref{h1_1} to obtain
	}
	\begin{equation*}
		\intO \ue_i(\cdot,T)dx \leq \intO \ue_i(\cdot,0)dx + \intQT F_i^\eps(\ue)dxdt.
	\end{equation*}
	Now, by using \eqref{F3}, we obtain
	\begin{align}\label{ee1}
		\sumi c_i\intO \ue_i(\cdot,T)dx  \leq \sumi c_i\intO \ue_i(\cdot,0)dx + \intQT \bra{K_1 \sumi \ue_i + K_2}dxdt.
	\end{align}
	The Gronwall lemma gives the desired $L^1$-estimate.
\end{proof}

\noindent\textbf{Building $L^p$-energy functions}: To this end, we write $\mathbb{Z}_{+}^{m}$ as the set of all $m$-tuples of non-negative integers. Addition and
	scalar multiplication by non-negative integers of elements in $\mathbb{Z}_{+}^{m}$
	is understood in the usual manner. If $\beta=(\beta_{1},...,\beta_{m})\in \mathbb{Z}_{+}^{m}$ and $p\in \mathbb N\cup \{0\}$,
	then we define $\beta^{p}=((\beta_{1})^{p},...,(\beta_{m})^{p})$.
	Also, if $\alpha=(\alpha_{1},...,\alpha_{m})\in  \mathbb{Z}_{+}^{m}$, then
	we define $|\alpha|=\sum_{i=1}^{m}\alpha_{i}$. Finally, if $z=(z_{1},...,z_{m})\in \mathbb{R}_{+}^{m}$
	and $\alpha=(\alpha_{1},...,\alpha_{m})\in \mathbb{Z}_{+}^{m}$, then we define
	$z^{\alpha}=z_{1}^{\alpha_{1}}\cdot...\cdot z_{m}^{\alpha_{m}}$,
	where we interpret $0^{0}$ to be $1$. For $p\in \mathbb N\cup \{0\}$, we build our $L^p$-energy function of the form
	\begin{equation}\label{Lp}
	\L_p[\ue](t) = \intO \H_p[\ue](t)dx
	\end{equation}
	where
	\begin{equation}\label{Hp}
	\H_p[\ue](t) = \sum_{\beta\in \mathbb Z_+^{m}, |\beta| = p}\begin{pmatrix}
	p\\ \beta\end{pmatrix}\theta^{\beta^2}\ue(t)^{\beta},
	\end{equation}
	with
	\begin{equation}\label{our-def}
	\begin{pmatrix}
	p\\ \beta\end{pmatrix}=\frac{p!}{\beta_1!\cdots\beta_{m}!},
	\end{equation}
	and $\theta= (\theta_1,\ldots, \theta_{m})$ where $\theta_1,...,\theta_{m}$ are positive real numbers which will be determined later. For convenience, hereafter we drop the subscript $\beta\in \mathbb Z_+^{m}$ in the sum as it should be clear. \blue{Note that if $\theta = (1,\ldots, 1)$ in \eqref{Hp}, we have
	\begin{equation*}
		\H_p[\ue](t) = \sum_{|\beta| = p}\begin{pmatrix}p\\\beta \end{pmatrix}\ue(t)^{\beta} = \bra{\sumi \ue_i(t)}^p.
	\end{equation*}
	Therefore, for a given $2\le p\in \mathbb N$, $\H_p[\ue]$ is a generalization of a multinomial expansion of degree $p$ in $\ue$. For $p=0,1,2$, one can write these functions explicitly as
	$$\H_0[\ue](t)=1\text{ and }\H_1[\ue](t)=\sum_{j=1}^{m}\theta_j\ue_j(t)$$
	and
	\begin{equation*}
		\H_2[\ue](t) = \sumi \theta_i^4\ue_i(t)^2 + 2\sum_{i=1}^{m-1}\sum_{j=i+1}^m\theta_i\theta_j\ue_i(t)\ue_j(t).
	\end{equation*}
	Thanks to the non-negativity of the solution, we have
	$$\L_p[\ue](t) \sim \sumi \|\ue_i(t)\|_{\LO{p}}^p.$$
	Therefore, we are going to use $\L_p[\ue](t)$ to obtain a priori estimates on $\ue$ for each $2\le p\in \mathbb N$. We will need two technical lemmas, concerning the derivative in time of $\H_p$ and integration by parts, whose proofs are postponed to Section \ref{sec:technical} in order not to interrupt the train of thought.}

	\medskip
	Next, we need the following functional inequality, which was proved in \cite{morgan2021global}. 
\begin{lemma}\label{GN}\cite[Lemma 2.3]{morgan2021global}
	Suppose $\Omega\subset \mathbb{R}^n$ such that the Gagliardo-Nirenberg inequality is satisfied and basic trace theorems apply (for instance $\Omega$ has Lipschitz boundary). Let $a\ge 1$, $p\ge 2a$ and $w:\overline\Omega\to\mathbb{R}_+$ such that $w^{p/2}\in H^1(\Omega)$ and there exists $K\ge 0$ such that $\|w\|_{a,\Omega}\le K$. If $0\le s < 2a/n$ and $\kappa>0$, then there exists $C_{\kappa}\ge 0$ (depending on $p,\kappa,r,a,\Omega$, but independent of $w$) such that
	\begin{align}\label{youbetcha}
	\int_\Omega w^{p+s}dx\le \kappa\int_\Omega \left(w^{p-2}|\nabla w|^2+w^p\right) dx+C_{\kappa}.
	\end{align}
\end{lemma}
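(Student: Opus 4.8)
The plan is to reduce the claimed inequality to a standard Gagliardo--Nirenberg interpolation inequality through the substitution $v := w^{p/2}$. Since $w^{p/2}\in H^1(\Omega)$ by hypothesis and $p\ge 2a\ge 2$, the function $v$ is a legitimate element of $H^1(\Omega)$, and the chain rule gives $|\nabla v|^2 = \tfrac{p^2}{4}\,w^{p-2}|\nabla w|^2$ a.e.; hence $\int_\Omega w^{p-2}|\nabla w|^2\,dx = \tfrac{4}{p^2}\|\nabla v\|_{L^2(\Omega)}^2$, $\int_\Omega w^p\,dx = \|v\|_{L^2(\Omega)}^2$, and, crucially, $\int_\Omega w^{p+s}\,dx = \|v\|_{L^q(\Omega)}^q$ with $q := 2 + \tfrac{2s}{p}$. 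Moreover, choosing the auxiliary exponent $\rho := \tfrac{2a}{p}\in(0,1]$ one gets $\|v\|_{L^\rho(\Omega)}^\rho = \int_\Omega w^a\,dx = \|w\|_{a,\Omega}^a\le K^a$, i.e. $\|v\|_{L^\rho(\Omega)}\le K^{p/2}$.

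The second step is to apply the Gagliardo--Nirenberg inequality on the bounded Lipschitz domain $\Omega$ in the form
\begin{equation*}
\|v\|_{L^q(\Omega)}\le C\,\|v\|_{H^1(\Omega)}^{\vartheta}\,\|v\|_{L^\rho(\Omega)}^{1-\vartheta},
\end{equation*}
where $\vartheta\in[0,1]$ is fixed by the dimensional balance $\tfrac1q = \vartheta\bra{\tfrac12-\tfrac1n} + (1-\vartheta)\tfrac1\rho$ and $C=C(n,p,s,a,\Omega)$. One should check admissibility: since $p\ge 2a$ we have $\rho\le 1 < 2\le q$, and since $s<2a/n$ together with $p\ge 2a$ forces $q<2+\tfrac2n$, which sits strictly below the critical Sobolev exponent, so a valid $\vartheta\in(0,1)$ exists. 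Note that $\rho$ may be strictly less than $1$ (precisely when $p>2a$, which is the regime relevant for large $p$), so one needs the form of Gagliardo--Nirenberg carrying a possibly sub-$L^1$ norm on the right-hand side; this is exactly what the standing hypothesis that "the Gagliardo--Nirenberg inequality is satisfied" on $\Omega$ provides. The decisive computation is that, solving the balance relation, $\vartheta q = \dfrac{q/\rho-1}{1/\rho-(1/2-1/n)}$; since $1/\rho\ge 1 > 1/2-1/n$, the inequality $\vartheta q<2$ is equivalent to $(q-2)/\rho<2/n$, i.e. to $s/a<2/n$, which is precisely the assumed range $0\le s<2a/n$.

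Finally, I would raise the Gagliardo--Nirenberg estimate to the power $q$ and insert $\|v\|_{L^\rho(\Omega)}\le K^{p/2}$, obtaining
\begin{equation*}
\int_\Omega w^{p+s}\,dx = \|v\|_{L^q(\Omega)}^q \le C'\,K^{(p/2)(1-\vartheta)q}\,\bra{\|\nabla v\|_{L^2(\Omega)}^2+\|v\|_{L^2(\Omega)}^2}^{\vartheta q/2}.
\end{equation*}
Since $\vartheta q/2<1$ by the previous step, Young's inequality gives, for any $\eta>0$,
\begin{equation*}
C'\,K^{(p/2)(1-\vartheta)q}\,\bra{\|\nabla v\|_{L^2(\Omega)}^2+\|v\|_{L^2(\Omega)}^2}^{\vartheta q/2} \le \eta\bra{\|\nabla v\|_{L^2(\Omega)}^2+\|v\|_{L^2(\Omega)}^2}+C_\eta,
\end{equation*}
and translating back via $\|\nabla v\|_{L^2(\Omega)}^2 = \tfrac{p^2}{4}\int_\Omega w^{p-2}|\nabla w|^2\,dx$ and $\|v\|_{L^2(\Omega)}^2 = \int_\Omega w^p\,dx$, then choosing $\eta = \kappa\min\{4p^{-2},1\}$, yields exactly the claimed inequality with $C_\kappa := C_\eta$, which depends only on $p,\kappa,s,a,\Omega$ and the bound $K$.

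The proof is essentially bookkeeping once the substitution $v=w^{p/2}$ is in place, so I expect no serious obstacle; the one genuinely delicate point is to make sure the interpolation inequality invoked remains valid down to the (possibly sub-$L^1$) exponent $\rho=2a/p$ on a general bounded Lipschitz domain, i.e. to use the correct Gagliardo--Nirenberg inequality with the lower-order term, and then to check carefully that the threshold $\vartheta q=2$ coincides exactly with $s=2a/n$, which is what makes the hypothesis $s<2a/n$ sharp for the concluding Young-type absorption.
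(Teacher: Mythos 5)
Your proposal is correct, and since the paper itself gives no proof of Lemma \ref{GN} (it simply cites \cite[Lemma 2.3]{morgan2021global}), your argument — substituting $v=w^{p/2}$, applying the Gagliardo--Nirenberg interpolation with the possibly sub-$L^1$ exponent $\rho=2a/p$, verifying that $\vartheta q<2$ is exactly the condition $s<2a/n$, and concluding with Young's inequality — is precisely the standard route taken in that reference. The only point worth recording is that $C_\kappa$ also depends on the bound $K$, which you correctly note and which is consistent with the lemma's requirement of independence from $w$ itself.
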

The following lemma shows a consequence of the intermediate sum condition \eqref{F4} which is crucial to the construction of the $L^p$-energy function.
\begin{lemma}\label{auxiliary_functions}
	Assume \eqref{F4}. Then there exist componentwise increasing functions $g_i: \R^{m-i}\to\R_+$ for $i=1,\ldots, m-1$ such that: if $\theta = (\theta_1,\ldots, \theta_m)\in (0,\infty)^m$ satisfies $\theta_m>0$ and $\theta_{i} \ge g_i(\theta_{i+1},\ldots, \theta_m)$ for all $i=1,\ldots, m-1$, then 
	\begin{equation*}
	\sumi \theta_i F_i^{\eps}(x,t,\ue) \leq K_{\theta}\bra{1+\sumi (\ue_i)^{r}} \quad \forall (x,t,\ue)\in\Omega\times\R_+\times \R_+^m
	\end{equation*}
	for some constant $K_{\theta}$ depending on $\theta$, $g_i$, and $K_3$ in \eqref{F4}.
\end{lemma}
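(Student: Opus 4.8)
The plan is to realise the weight vector $\theta=(\theta_1,\dots,\theta_m)$ as a \emph{non-negative} linear combination of the rows of the lower triangular matrix $A=(a_{ij})$ appearing in \eqref{F4}. First I would record that $F^\eps$ inherits \eqref{F4} with the same $A$, $K_3$, $r$: writing $F_i^\eps=\rho_\eps F_i$ with $\rho_\eps:=[1+\eps\sum_{j=1}^m|F_j|]^{-1}\in(0,1]$, multiplying the $i$-th inequality of \eqref{F4} by $\rho_\eps$ only shrinks its (non-negative) right-hand side, so $\sum_{j=1}^i a_{ij}F_j^\eps\le K_3(1+\sumi u_i^r)$ as well; hence it suffices to argue for a generic $F$ satisfying \eqref{F4}. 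Now, suppose one can produce $c=(c_1,\dots,c_m)$ with $c_k\ge 0$ and $\theta^\top=c^\top A$, i.e.\ $\theta_j=\sum_{i=1}^m c_i a_{ij}$ for all $j$. Multiplying the $k$-th inequality of \eqref{F4} by $c_k$ and summing over $k$ then gives
\[
\sumi \theta_i F_i=\sum_{k=1}^m c_k\bra{\sum_{i=1}^k a_{ki}F_i}\le K_3\bra{\sum_{k=1}^m c_k}\bra{1+\sumi u_i^r},
\]
so the lemma holds with $K_\theta:=K_3\sum_{k=1}^m c_k$. Everything therefore reduces to constructing such a $c$ together with thresholds $g_i$ that force $c_k\ge 0$.

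Since $A$ is lower triangular with positive diagonal, I would solve $\theta^\top=c^\top A$ by back-substitution,
\[
c_m=\frac{\theta_m}{a_{mm}},\qquad c_j=\frac{1}{a_{jj}}\bra{\theta_j-\sum_{i=j+1}^m c_i a_{ij}},\quad j=m-1,\dots,1,
\]
and set
\[
g_j(\theta_{j+1},\dots,\theta_m):=\sum_{i=j+1}^m \frac{a_{ij}}{a_{ii}}\,\theta_i,\qquad j=1,\dots,m-1
\]
(one may replace each $\theta_i$ by $(\theta_i)_{+}$ to obtain a genuine map $\R^{m-j}\to\R_+$, which is irrelevant here since $\theta\in(0,\infty)^m$). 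Each $g_j$ is componentwise non-decreasing because $a_{ij}/a_{ii}\ge 0$. A downward induction on $j$ then shows that, under the hypotheses $\theta_i\ge g_i(\theta_{i+1},\dots,\theta_m)$ for $i=1,\dots,m-1$, one has $0\le c_j\le\theta_j/a_{jj}$ for every $j$: the base case $j=m$ is immediate, and in the inductive step the bound $c_i\le\theta_i/a_{ii}$ for $i>j$ yields $\sum_{i>j}c_i a_{ij}\le\sum_{i>j}(a_{ij}/a_{ii})\theta_i=g_j(\theta_{j+1},\dots,\theta_m)\le\theta_j$, whence $c_j\ge 0$, while $c_j\le\theta_j/a_{jj}$ holds because a non-negative quantity is subtracted. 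Plugging $c\ge 0$ into the reduction of the first paragraph completes the proof, with $K_\theta=K_3\sum_{k=1}^m c_k\le K_3\sum_{k=1}^m\theta_k/a_{kk}$, a constant determined by $\theta$, the matrix $A$ (equivalently the $g_i$), and $K_3$.

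The hard part is the monotonicity of the $g_i$. The exact condition making $c_j\ge 0$ is $\theta_j\ge\sum_{i>j}c_i a_{ij}$, but the $c_i$ with $i>j$ depend on $(\theta_{j+1},\dots,\theta_m)$ in a generally non-monotone way (increasing some $\theta_i$ can \emph{decrease} some $c_\ell$ further down the recursion), so this sharp threshold is not componentwise increasing and cannot serve as $g_j$. The remedy, used above, is simply to overestimate each $c_i$ by $\theta_i/a_{ii}$, which turns the sharp threshold into the manifestly increasing sufficient condition $\theta_j\ge g_j(\theta_{j+1},\dots,\theta_m)$ at no real cost. Apart from that, the argument is elementary back-substitution and index bookkeeping; the one thing to double-check carefully is the identity $\sumi\theta_iF_i=\sum_{k}c_k\sum_{i\le k}a_{ki}F_i$, which follows by interchanging the two summations in $\theta_i=\sum_{k\ge i}c_ka_{ki}$.
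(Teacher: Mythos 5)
Your argument is correct, and it takes a more self-contained route than the paper. The paper disposes of the $\eps$-truncation via the single observation $\sumi\theta_iF_i^{\eps}\le\max\{0,\sumi\theta_iF_i\}$ (valid because $F_i^{\eps}=\rho_\eps F_i$ with one common factor $\rho_\eps\in(0,1]$) and then simply cites Lemma 2.2 of \cite{morgan2021global} for the existence of the functions $g_i$; you instead check directly that \eqref{F4} itself survives the truncation and then build everything explicitly: solving $\theta^{\top}=c^{\top}A$ by back-substitution, taking $g_j(\theta_{j+1},\ldots,\theta_m)=\sum_{i>j}(a_{ij}/a_{ii})\theta_i$, and proving $0\le c_j\le\theta_j/a_{jj}$ by downward induction so that summing the $c_k$-weighted rows of \eqref{F4} yields the bound with $K_\theta=K_3\sum_k c_k\le K_3\sum_k\theta_k/a_{kk}$. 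Both reductions of the $\eps$-problem are sound; your version has the merit of making the thresholds $g_i$ and the constant $K_\theta$ completely explicit in terms of $A$ and $K_3$, which is precisely what is exploited later when $\theta_k$ is replaced by $\theta_k^{2\beta_k+1}$ in the proof of Lemma \ref{Lp-bound}. One cosmetic remark: your $g_j$ are componentwise non-decreasing (constant in arguments where $a_{ij}=0$), which is the property actually used; if one insists on strictly increasing maps, enlarging $g_j$ by any increasing function of $(\theta_{j+1},\ldots,\theta_m)$ only strengthens the hypothesis and leaves the conclusion intact.
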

\begin{proof}
	The proof follows exactly from \cite[Lemma 2.2]{morgan2021global} with the observation
	\begin{equation*}
		\sumi \theta_iF_i^{\eps}(x,t,\ue) \leq \max\left\{0;\sumi \theta_iF_i(x,t,\ue)\right\} \quad \forall (x,t,\ue)\in \Omega\times\R_+\times\R_+^m.
	\end{equation*}
\end{proof}

We are now ready to use the $L^p$-energy functions built in \eqref{Lp}-\eqref{Hp} to obtain the $L^p$-estimates of $\ue$.
\begin{lemma}\label{Lp-bound}
	Assume \eqref{F1}, \eqref{F2}, \eqref{F3} and \eqref{F4}. Then for any $1\leq p < \infty$ and any $T>0$, there exists a constant $C_{T,p}$ depending on $T$, $p$ and other parameters such that
	\begin{equation*}
		\sup_{t\in (0,T)}\|\ue_i(t)\|_{\LO{p}} \leq C_{T,p} \quad \forall i=1,\ldots, m.
	\end{equation*}
\end{lemma}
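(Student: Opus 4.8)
The plan is to estimate the $L^p$-energy $\L_p[\ue](t)$ introduced in \eqref{Lp}--\eqref{Hp} via a Gronwall argument, for each integer $p\ge 2$; the case $p=1$ is Lemma \ref{L1-bound}, and the remaining real exponents follow from H\"older's inequality on the bounded domain $\Omega$ (bound $\|\ue_i\|_{\LO{p}}$ by $\|\ue_i\|_{\LO{P}}$ for an integer $P\ge p$). First I would fix, once and for all, a vector $\theta=(\theta_1,\ldots,\theta_m)\in(0,\infty)^m$ meeting the requirements of Lemma \ref{auxiliary_functions} -- choosing $\theta_m,\theta_{m-1},\ldots,\theta_1$ in this order so that $\theta_i\ge g_i(\theta_{i+1},\ldots,\theta_m)$ -- and, if the technical lemmas of Section \ref{sec:technical} demand it, also large enough that the diffusion contribution to $\tfrac{d}{dt}\L_p[\ue]$ is diagonal-dominant. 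Crucially, the functions $g_i$, hence $\theta$, depend on neither $p$ nor $\eps$, so the same $\theta$ serves for all $p$.

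Next I would differentiate $\L_p[\ue]$ in time and integrate by parts, invoking the two technical lemmas postponed to Section \ref{sec:technical} (the time-derivative formula for $\H_p$ and the integration-by-parts identity, of which Lemma \ref{Hp-lem8} is one) -- these are exactly the steps requiring care, since $\ue$ is only a weak solution and $D_i$ is merely bounded and measurable. The outcome is an inequality of the schematic form
\[
\frac{d}{dt}\L_p[\ue](t) + c_\theta\lam\intO\sumi (\ue_i)^{p-2}|\na\ue_i|^2\,dx \le (\text{advection}) + (\text{reaction}),
\]
where the dissipation on the left comes from \eqref{ellipticity} and the structure of $\H_p$. For the advection term, \eqref{bounded_drift} and Young's inequality absorb half of the dissipation and leave a remainder $\lesssim \intO\bra{\sumi\ue_i}^p dx\lesssim \L_p[\ue](t)$. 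For the reaction term, the combinatorial identity for the time derivative of $\H_p$ together with the choice of $\theta$ and Lemma \ref{auxiliary_functions} (applied to $F_i^\eps$, so the bound is $\eps$-independent) yield
\[
(\text{reaction}) \lesssim \intO\Big(1+\sumi(\ue_i)^{r}\Big)\Big(\sumi\ue_i\Big)^{p-1}dx \lesssim \intO\Big(1+\Big(\sumi\ue_i\Big)^{p-1+r}\Big)dx.
\]

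Here the growth restriction \eqref{growth} enters. If $0\le r\le 1$, then $p-1+r\le p$ and the reaction term is pointwise $\lesssim 1+\bra{\sumi\ue_i}^p$, hence $\lesssim 1+\L_p[\ue](t)$. If $1<r<1+2/n$, I would apply Lemma \ref{GN} to each $w=\ue_i$ with $a=1$ (using the uniform-in-$\eps$ $L^1$ bound $M_T$ of Lemma \ref{L1-bound} as the constant $K$), with exponent $p\ge 2=2a$ and $s=r-1\in(0,2/n)=(0,2a/n)$; choosing the free parameter $\kappa$ small absorbs $\intO (\ue_i)^{p+s}dx$ into the dissipation plus a term $\lesssim\intO(\ue_i)^pdx\lesssim\L_p[\ue](t)$, leaving only an additive constant. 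Collecting everything gives $\frac{d}{dt}\L_p[\ue](t)\le C_{T,p}\bra{\L_p[\ue](t)+1}$, and since $\L_p[\ue](0)\lesssim\sumi\|\ue_{i,0}\|_{\LO{p}}^p$ is bounded uniformly in $\eps$, Gronwall's lemma together with the equivalence $\L_p[\ue]^{1/p}\sim\sumi\|\ue_i\|_{\LO{p}}$ completes the proof.

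I expect the principal obstacle to be the rigorous justification of differentiating $\L_p[\ue]$ and of the integration by parts -- in particular, showing that the diffusion contribution genuinely produces the nonnegative, dissipative quadratic form displayed above even though the $D_i$ are distinct nonsmooth matrices for different $i$. This is precisely what the technical lemmas of Section \ref{sec:technical} are built to supply, and the combinatorial bookkeeping that matches the $\theta^{\beta^2}$ weights across the monomials of $\H_p$ so that (i) the second-order terms remain diagonal-dominant and (ii) the first-order reaction terms reorganize into $\sumi\theta_iF_i^\eps$ times a degree-$(p-1)$ polynomial is the technical heart; once granted, the PDE estimate is a routine Young/Gagliardo--Nirenberg/Gronwall argument.
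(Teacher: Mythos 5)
Your overall route coincides with the paper's: differentiate $\L_p[\ue]$, invoke Lemma \ref{Hp-lem7} and Lemma \ref{Hp-lem8} together with integration by parts, extract dissipation from a positive definite quadratic form once $\theta$ is large, absorb the advection by Young's inequality, control the reaction through Lemma \ref{auxiliary_functions}, and conclude with Lemma \ref{GN} (with $a=1$, fed by the $L^1$ bound of Lemma \ref{L1-bound}) and Gronwall. There is, however, one genuine flaw: your claim that a single $\theta$, fixed independently of $p$ and chosen only so that $\theta_i\ge g_i(\theta_{i+1},\ldots,\theta_m)$, handles the reaction term. It does not. In the identity for $\frac{\partial}{\partial t}\H_p[\ue]$ the nonlinearity $F_k^{\eps}$ appears multiplied by $\theta_k^{2\beta_k+1}$, with $\beta$ ranging over all multi-indices with $|\beta|=p-1$, so what must be verified is $\sum_{k}\theta_k^{2\beta_k+1}F_k^{\eps}(\ue)\le K\bra{1+\sumi(\ue_i)^r}$ for \emph{every} such $\beta$. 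Applying Lemma \ref{auxiliary_functions} with $\gamma_k=\theta_k^{2\beta_k+1}$ requires $\theta_i^{2\beta_i+1}\ge g_i(\theta_{i+1}^{2\beta_{i+1}+1},\ldots,\theta_m^{2\beta_m+1})$; since $\beta_i$ may equal $0$ while the $\beta_j$ with $j>i$ may be as large as $p-1$, and the $g_i$ are componentwise increasing, a $p$-independent $\theta$ satisfying only $\theta_i\ge g_i(\theta_{i+1},\ldots,\theta_m)$ will in general violate this once $p$ is large. The paper therefore chooses $\theta$ depending on $p$, demanding $\theta_i\ge g_i(\theta_{i+1}^{2p-1},\ldots,\theta_m^{2p-1})$, which costs nothing because every constant in the conclusion ($\alpha_p$, $K_{\tilde\theta}$, $C_{T,p}$) is allowed to depend on $p$.

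With that correction the rest of your plan matches the paper's proof. Two smaller remarks: the positive definiteness of the diffusion quadratic form is not supplied by Section \ref{sec:technical} but argued inside the proof of Lemma \ref{Lp-bound} via the block matrices $B(\beta)$ and $\widetilde B(\beta)$ (and there the largeness condition on $\theta$ is indeed independent of $p$ and $\beta$, as you expected); and your explicit splitting of the cases $0\le r\le 1$ (no Gagliardo--Nirenberg needed) versus $1<r<1+2/n$ is a point the paper passes over silently, so spelling it out is a small improvement rather than a deviation.
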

\begin{proof}
	Let $\ue$ solve (\ref{Sys_truncated}), and $\L_p(t):= \L_p[\ue](t)$ be defined in \eqref{Lp}.
	Then 
	\begin{align*}
	\L_p'(t)&=\int_{\Omega}\sum_{|\beta|=p-1}\left(\begin{array}{c}
	p\\
	\beta
	\end{array}\right)\theta^{\beta^{2}}\ue(x,t)^{\beta}\sum_{k=1}^{m}\theta_{k}^{2\beta_{k}+1}\frac{\partial}{\partial t}\ue_{k}(x,t)dx\\
	&=\int_{\Omega}\sum_{|\beta|=p-1}\left(\begin{array}{c}
	p\\
	\beta
	\end{array}\right)\theta^{\beta^{2}}\ue(x,t)^{\beta}\sum_{k=1}^{m}\theta_{k}^{2\beta_{k}+1}\\
	&\qquad\qquad \times \biggl[\na\cdot(D_k(x,t)\na \ue_k(x,t)) - \na\cdot(B_k\ue_k)+F_{k}(\ue(x,t))\biggr]dx.
	\end{align*}
	If we apply Lemma \ref{Hp-lem8} and integration by parts, we have
	\[
	\int_{\Omega}\sum_{|\beta|=p-1}\left(\begin{array}{c}
	p\\
	\beta
	\end{array}\right)\theta^{\beta^{2}}\ue(x,t)^{\beta}\sum_{k=1}^{m}\theta_{k}^{2\beta_{k}+1}\na\cdot(D_k(x,t)\na \ue_k(x,t))dx=I,
	\]
	where 
	\[
	I=-\int_{\Omega}\sum_{|\beta|=p-2}\left(\begin{array}{c}
	p\\
	\beta
	\end{array}\right)\theta^{\beta^{2}}\ue(x,t)^{\beta}\sum_{k=1}^{m}\sum_{l=1}^{m}C_{k,r}(\beta)\left(D_k\nabla \ue_k\right)\cdot\nabla \ue_l dx
	\]
	with
	\[
	C_{k,l}(\beta)=\begin{cases}
	\begin{array}{cc}
	\theta_{k}^{2\beta_{k}+1}\theta_{l}^{2\beta_{l}+1}, & k\ne l,\\
	\theta_{k}^{4\beta_{k}+4}, & k=l.
	\end{array}\end{cases}
	\]
	For a given $\beta$ with $|\beta|=p-2$, create an $mn\times mn$
	matrix $B(\beta)$ made up of $m^{2}$ blocks $B_{k,l}(\beta)$, each
	of size $n\times n$, where 
	\[
	B_{k,l}(\beta)=\frac{1}{2}C_{k,l}(\beta)\left(D_{k}+D_{l}\right).
	\]
	Note that for each $k=1,...,m$, 
	\[
	B_{k,k}(\beta)=\theta_{k}^{4\beta_{k}+4}D_{k}.
	\]
	Also, 
	\[
	I=-\int_{\Omega}\sum_{|\beta|=p-2}\left(\begin{array}{c}
	p\\
	\beta
	\end{array}\right)\theta^{\beta^{2}}\ue(x,t)^{\beta}\nabla \ue(x,t)^{T}B(\beta)\nabla \ue(x,t)dx,
	\]
	where $\nabla \ue(x,t)$ is a column vector of size $mn\times1$, and
	for $j=1,...,m$, entries $n(j-1)+1\mbox{ to \ensuremath{nj}}$ of
	$\nabla \ue(x,t)$ are $\nabla \ue_{j}(x,t)$. 
	We claim that if all of the entries in $\theta$ are sufficiently large,
	then $B(\beta)$ is positive definite. In fact, it is a simple matter to show
	it is positive definite if and only if the $mn\times mn$ matrix $\widetilde{B}(\beta)$
	made up of $n\times n$ blocks 
	\[
	\widetilde{B}_{k,l}(\beta)=\begin{cases}
	\begin{array}{cc}
	\theta_{k}^{2}D_k, & k=l,\\
	\frac{1}{2}\left(D_{k}+D_{l}\right), & k\ne l,
	\end{array}\end{cases}
	\]
	is positive definite. However, if we recall the uniform positive definiteness
	of the matrices $D_k$, we can show that if $\theta_{i}$
	is sufficient large for each $i$, then we have what we need. Consequently, returning to above, we can show there exists $\alpha_p>0$ so that
	\begin{align}\label{nearly-there}
	\L_p'(t)&+\alpha_p\sum_{k=1}^m \int_\Omega |\nabla (\ue_k)^{p/2}(x,t)|^2 dx \nonumber\\
	&\le\int_{\Omega}\sum_{|\beta|=p-1}\left(\begin{array}{c}
	p\\
	\beta
	\end{array}\right)\theta^{\beta^{2}}\ue(x,t)^{\beta}\sum_{k=1}^{m}\theta_{k}^{2\beta_{k}+1}\biggl[-\na\cdot(B_k \ue_k) + F_{k}^{\eps}(\ue(x,t))\biggr]dx.
	\end{align}
	For the first term on the right hand side of \eqref{nearly-there} we can use integration by parts and H\"older's inequality to obtain
	\begin{equation}\label{e1}
	\begin{aligned}
		&-\int_{\Omega}\sum_{|\beta|=p-1}\left(\begin{array}{c}
		p\\
		\beta
		\end{array}\right)\theta^{\beta^{2}}\ue(x,t)^{\beta}\sum_{k=1}^{m}\theta_{k}^{2\beta_{k}+1}\na\cdot(B_k\ue_k)\\
		&\leq \frac{\alpha_p}{2}\sumk\intO |\na (\ue_k)^{p/2}(x,t)|^2dx + C_{\theta,p}\sumk \intO |\ue_k|^pdx.
	\end{aligned}
	\end{equation}
	We now look closely at the second term on the right hand side of (\ref{nearly-there}), and in particular the term
	\[
	\sum_{k=1}^{m}\theta_{k}^{2\beta_{k}+1}F_{k}^{\eps}(\ue).
	\]
	Note that from (\ref{F4}) and Lemma \ref{auxiliary_functions}, there exist componentwise increasing functions $g_i:\mathbb{R}^{m-i}\to\mathbb{R}_+$ for $i=1,...,m-1$ so that if $\gamma_m>0$ and $\gamma_i\ge g_i(\gamma_{i+1},...,\gamma_m)$ for $i=1,...,m-1$ then there exists $K_\gamma>0$ so that
	\[
	\sum_{k=1}^m \gamma_k F_k^{\eps}(x,t,\ue)\le K_\gamma\bra{1+\sumi (\ue_i)^{r}}\text{ for all }(x,t,\ue)\in \Omega\times\mathbb{R}_+\times\mathbb{R}_+^m.
	\]
	So, we choose $\theta$ so that its components are sufficiently large that the previous positive definiteness condition is satisfied, and 
	\[
	\theta_i\ge g_i(\theta_{i+1}^{2p-1},...,\theta_m^{2p-1})\text{ for }i=1,...,m-1,
	\] 
	where $g_i$ are functions constructed in Lemma \ref{auxiliary_functions}. Then there exists $K_{\tilde\theta}$ so that for all $\beta\in\mathbb{Z}_+$ with $|\beta|=p-1$, we have
	\[
	\sum_{k=1}^{m}\theta_{k}^{2\beta_{k}+1}F_{k}^{\eps}(\ue(x,t))\le K_{\tilde\theta}\bra{1+\sumi (\ue_i)^r}\text{ for all }(x,t,\ue)\in \Omega\times\mathbb{R}_+\times\mathbb{R}_+^m.
	\]
	It follows from this and \eqref{e1} that there exists $C_p>0$ so that (\ref{nearly-there}) implies
	\begin{align}\label{closer-still}
	\L_p'(t)+\frac{\alpha_p}{2}\sum_{k=1}^m \int_\Omega |\nabla (\ue_k)^{p/2}(x,t)|^2 dx
	&\le C_p\int_{\Omega}\sum_{k=1}^m\left(\ue_k(x,t)^{p-1+r} + \ue_k(x,t)^p+1\right) dx.
	\end{align}
	By adding both sides with $\frac{\alpha_p}{2}\sumk\intO (\ue_k)^pdx$ and using
	\begin{equation*}
		(\ue_k)^{p} \leq C\sbra{1+(\ue_k)^{p-1+r}} \quad \text{due to} \quad r\geq 1
	\end{equation*}
	we get
	\begin{equation}\label{e3}
		\L_p'(t)+\frac{\alpha_p}{2}\sumk\intO\bra{|\na (\ue_k)^{p/2}|^2 + |\ue_k|^p}dx \leq C_p\bra{1+\sumk\intO (\ue_k)^{p-1+r}dx}.
	\end{equation}
	Applying Lemma \ref{GN} with $a = 1$ and $1\leq r < 1 + 2/n$ to the right hand side above, implies there exists $C_{T,p}>0$, $C>0$ and $\delta>0$ so that
	\begin{align}\label{muchcloser-still}
	\L_p'(t) + C\int_{\Omega}\sum_{k=1}^m \ue_k(x,t)^p dx\le C_{T,p},
	\end{align}
	which implies
	\begin{align}\label{muchcloser-still_1}
	\L_p'(t) + \delta \L_p(t)\le C_{T,p},
	\end{align}
	for some $\delta>0$. Clearly, (\ref{muchcloser-still_1}) allows us to obtain the estimates
	\begin{equation*}
		\sup_{t\geq 0}\L_p(t) \le C_{T,p},
	\end{equation*}
	and these in turn allow us to obtain estimates for 
	\begin{equation*}
		\sup_{t\geq 0}\|\ue_k(t)\|_{\LO{p}} \leq C_{T,p} \quad \forall k=1,\ldots, m.
	\end{equation*}
\end{proof}
Combining the $L^p$-estimates and the polynomial growth of the nonlinearities in \eqref{F5} we ultimately obtain the boundedness in $L^\infty$.

\begin{proposition}\label{pro1}
	Assume \eqref{ellipticity}, \eqref{bounded_diffusion}, \eqref{bounded_drift}, \eqref{F1}, \eqref{F2}, \eqref{F3}, \eqref{F4} and \eqref{F5}. Then for any $T>0$ the solution to the truncated system \eqref{Sys_truncated} is bounded in $L^\infty$ locally in time, i.e.
	\begin{equation*}
		\|\ue_i\|_{\LQ{\infty}} \leq C_T \quad \forall i=1,\ldots,m.
	\end{equation*}
	for some constant $C_T$ depending on $T$ and independent of $\eps>0$.
\end{proposition}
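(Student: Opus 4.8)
\medskip
\noindent\emph{Proof idea.}
The plan is to bootstrap from the a priori bounds already in hand. By Lemma \ref{Lp-bound}, for every finite $p$ the quantity $\sup_{t\in(0,T)}\|\ue_i(t)\|_{\LO{p}}$ is bounded by a constant independent of $\eps$. From the definition \eqref{Feps} we have the pointwise inequality $F_i^\eps(\ue)\le\max\{0,F_i(\ue)\}$, so the polynomial growth \eqref{F5} gives $F_i^\eps(\ue)\le \overline f_i := K_4\bigl(1+\sumj(\ue_j)^\ell\bigr)$. Applying Lemma \ref{Lp-bound} with the exponent $p\ell$ then shows that $\overline f_i$ is bounded in $L^\infty(0,T;\LO{p})$, uniformly in $\eps$, for \emph{every} finite $p$; in particular I fix some $p>n/2$.

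Next I would compare $\ue_i$ with the solution $\overline u_i$ of the \emph{linear} Dirichlet problem $\pa_t\overline u_i-\na\cdot(D_i\na\overline u_i)+\na\cdot(B_i\overline u_i)=\overline f_i$ in $\Omega\times(0,T)$, $\overline u_i=0$ on $\pa\Omega$, $\overline u_i(\cdot,0)=\ue_{i,0}$ (which is well posed by the standard $L^2$ theory, as $\overline f_i\in L^2(Q_T)$ and $\ue_{i,0}\in\LO{\infty}$). Since $z_i:=\overline u_i-\ue_i$ solves the same linear equation with right-hand side $\overline f_i-F_i^\eps(\ue)\ge0$ and vanishing initial and boundary data, testing with $z_{i,-}:=\min\{z_i,0\}$ and using \eqref{ellipticity}, \eqref{bounded_drift}, Young's inequality and Gronwall's lemma exactly as in the proof of Lemma \ref{lem:existence_truncated} yields $z_{i,-}=0$, i.e.\ $\ue_i\le\overline u_i$; combined with the non-negativity $\ue_i\ge0$ (Lemma \ref{lem:existence_truncated}) this gives $0\le\ue_i\le\overline u_i$ a.e.\ in $Q_T$, hence $\|\ue_i\|_{\LQ{\infty}}\le\|\overline u_i\|_{\LQ{\infty}}$.

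It then remains to bound $\overline u_i$ in $\LQ{\infty}$, which I would obtain from the classical theory of scalar linear parabolic equations with bounded measurable coefficients: by the De Giorgi--Nash--Moser / Ladyzhenskaya--Solonnikov--Ural'tseva maximum-principle estimates (see \cite{LSU68}), equivalently from Aronson's Gaussian upper bound for the fundamental solution of $\pa_t-\na\cdot(D_i\na\,\cdot\,)+\na\cdot(B_i\,\cdot\,)$ together with Duhamel's formula, a solution with source in $L^\infty(0,T;\LO{p})$, $p>n/2$, and $L^\infty$ initial datum satisfies $\|\overline u_i\|_{\LQ{\infty}}\le C_T\bigl(\|\ue_{i,0}\|_{\LO{\infty}}+\|\overline f_i\|_{L^\infty(0,T;\LO{p})}\bigr)$, where $C_T$ depends only on $n,p,\Omega,T,\lam,\|D_i\|_{\LQ{\infty}}$ and $\Gamma$. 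Since $\|\ue_{i,0}\|_{\LO{\infty}}$ and $\|\overline f_i\|_{L^\infty(0,T;\LO{p})}$ are bounded uniformly in $\eps$, this closes the argument.

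\textbf{Main obstacle.} The only delicate point is the last step: one needs the linear $L^\infty$ estimate to hold under \emph{merely bounded measurable} diffusion $D_i$ (and divergence-form drift $B_i$), with a constant that does not depend on $\eps$. This is exactly where the absence of any regularity of the coefficients is absorbed, and it is the reason the argument is routed through the De Giorgi--Nash--Moser / Aronson machinery rather than through the parabolic $L^p$ maximal-regularity estimates that would require continuity of $D_i$. Everything else is routine once Lemma \ref{Lp-bound} is available.
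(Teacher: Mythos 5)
Your argument is correct, and its skeleton coincides with the paper's: both start from Lemma \ref{Lp-bound}, dominate $F_i^\eps(\ue)$ by the polynomial majorant $G_i(\ue)=K_4\bigl(1+\sum_j(\ue_j)^{\ell}\bigr)$ via \eqref{F5} and \eqref{Feps}, and then invoke linear parabolic $L^\infty$-regularity for divergence-form operators with merely bounded measurable coefficients. Where you differ is in how the one-sided nature of the bound on $F_i^\eps$ is absorbed. The paper applies the De Giorgi-type truncation argument of \cite[Proposition 3.1]{nittka2014inhomogeneous} (or \cite{LSU68}) directly to the equation for $\ue_i$, observing that the test functions $(\ue_i-k)_+$ are non-negative, so only the upper bound $F_i^\eps(\ue)\le G_i(\ue)$ is needed, with the source measured in $\LQ{p}$ for $p>(n+2)/2$. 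You instead introduce an auxiliary linear Dirichlet problem with the majorant source $\overline f_i$, prove $0\le \ue_i\le\overline u_i$ by testing the difference with its negative part (the same device as in Lemma \ref{lem:existence_truncated}, and legitimate at the weak level since $z_i\in L^2(0,T;H_0^1(\Omega))$ with $\pa_t z_i\in L^2(0,T;H^{-1}(\Omega))$), and then apply the classical $L^\infty$ bound for the linear problem with source in $L^\infty(0,T;\LO{p})$, $p>n/2$ (LSU Chapter III, or Aronson's Gaussian bounds plus Duhamel; note the drift $\na\cdot(B_i\,\cdot)$ and the Dirichlet condition are covered by these results with constants depending only on $n,p,\Omega,T,\lam,\|D_i\|_{\LQ{\infty}},\Gamma$, hence independent of $\eps$). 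Your comparison step buys a cleaner reduction to an off-the-shelf scalar linear estimate at the cost of an extra auxiliary solution; the paper's route avoids the comparison function but requires checking that Nittka's iteration goes through with the one-sided bound (its Eq.\ (A.15)). Both are valid, and both yield a constant $C_T$ uniform in $\eps$, as required.
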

\begin{proof}
	From \eqref{F5},
	\begin{equation*}
		F_i^{\eps}(\ue)\leq F_i(\ue) \leq G_i(\ue):= K_4\sbra{1+\sumi (\ue_i)^{\ell}}.
	\end{equation*}
	Lemma \ref{Lp-bound} implies that for any $1\leq p <\infty$ a constant $C_p>0$ exists depending on $p$ (but not on $T$) such that
	\begin{equation*} 
		\|G_i(\ue)\|_{\LQ{p}} \leq C_p \quad \forall i=1,\ldots, m.
	\end{equation*}
	By choosing $p> \frac{n+2}{2}$ and using the regularization of the parabolic operator $\pa_tv - \na\cdot(D_i\na v) + \na\cdot(B_iv)$ (see e.g. \cite{LSU68} or \cite[Proposition 3.1]{nittka2014inhomogeneous}) we obtain
	\begin{equation*}
		\|u_i\|_{\LQ{\infty}}\le C_T \quad \forall i=1,\ldots, m.
	\end{equation*}
	\blue{Indeed, this follows exactly the same as in the proof of Proposition 3.1 in \cite{nittka2014inhomogeneous} with a slight modification concerning (A.15) therein. More precisely, with $f(s) = F_i^\eps(\ue(s))$ and $u^{(k)}(s) = (\ue_i(s) - k)_+\ge 0$ in  \cite[Eq. (A.15)]{nittka2014inhomogeneous}, we can estimate
	\begin{equation*}
		\intO f(s)u^{(k)}(s)dx = \intO F_i^{\eps}(\ue(s))(\ue_i(s)-k)_+dx \leq \intO G_i(\ue(s))(\ue_i(s)-k)_+dx
	\end{equation*}
	and the rest of the proof of \cite[Proposition 3.1]{nittka2014inhomogeneous} goes through.}
	This completes the proof of Proposition \ref{pro1}.
\end{proof}
\subsubsection{Passing to the limit - Global existence}\label{subsec3}
	In this Subsection, we pass to the limit $\eps\to 0$ in \eqref{Sys_truncated} using the uniform estimates in Subsection \ref{subsec2}.
	\begin{proof}[Proof of Theorem \ref{thm1}: Global existence]
		From Subsection \ref{subsec2} we have the following uniform-in-time bound
		\begin{equation*}
			\|\ue_i\|_{\LQ{\infty}} \leq C_T \quad \forall i=1,\ldots, m.
		\end{equation*}
		Due to the polynomial growth \eqref{F5}, it follows that
		\begin{equation*}
			\|F_i(\ue)\|_{\LQ{\infty}} \leq C_T \quad \forall i=1,\ldots, m.
		\end{equation*}
		By multiplying \eqref{Sys_truncated} by $\ue_i$ then integrating on $Q_T$ gives
		\begin{align*}
			\frac 12 \|\ue_i(T)\|_{\LO{2}}^2 + \intQT (D_i\na\ue_i)\cdot \na \ue_i dxdt\\
			= \frac 12 \|\ue_{i,0}\|_{\LO{2}}^2 + \intQT \ue_iB_i\cdot \na \ue_i dxdt + \intQT F_i^{\eps}(\ue)\ue_i dxdt.
		\end{align*}
		The ellipticity \eqref{ellipticity} and H\"older's inequality give
		\begin{equation*}
			\intQT (D_i\na\ue_i)\cdot \na\ue_i dxdt \geq \lam\|\ue_i\|_{\LQ{2}}^2
		\end{equation*}
		and
		\begin{equation*}
			\intQT \ue_iB_i\cdot \na\ue_idxdt \leq \frac{\lam}{2}\|\na \ue_i\|_{\LQ{2}}^2 + \frac{\Gamma^2}{2\lam}\|\ue_i\|_{\LQ{2}}^2,
		\end{equation*}
		and consequently 
		\begin{equation*}
			\|\na\ue_i\|_{\LQ{2}}^2 \leq C_T \quad \forall i=1,\ldots, m.
		\end{equation*}
		Thus,
		\begin{equation*}
			\{\ue_i\}_{\eps>0} \quad \text{ is bounded uniformly in $\eps$ in } \quad \LQ{\infty}\cap L^2(0,T;H_0^1(\Omega)).
		\end{equation*}
		From this, it follows easily that
		\begin{equation*}
			\{\pa_t\ue_i\}_{\eps>0} \quad \text{ is bounded uniformly in $\eps$ in } \quad L^2(0,T;H^{-1}(\Omega)).
		\end{equation*}
		The classical Aubin-Lions lemma gives the strong convergence (up to a subsequence)
		\begin{equation*}
			\ue_i \xrightarrow{\eps\to 0} u_i \quad \text{ strongly in } \quad \LQ{2}.
		\end{equation*}
		Consequently, for any $1\leq p<\infty$,
		\begin{equation*}
			\ue_i \xrightarrow{\eps\to 0} u_i \quad \text{ strongly in } \quad \LQ{p},
		\end{equation*}
		thanks to the uniform $L^\infty$-bound of $\ue_i$. This is enough to pass to the limit in the weak formulation of \eqref{Sys_truncated}
		\begin{align*}
		-\intO \varphi(\cdot,0)\ue_{i,0}dx &-\intQT \ue_i \pa_t\varphi dxdt + \intQT D_i\na\ue_i \cdot \na\varphi dxdt\\
		&-\intQT \ue_i B_i\cdot \na\varphi dxdt = \intQT F_i^{\eps}(\ue)\varphi dxdt,
		\end{align*}
		to obtain that $u = (u_1,\ldots, u_m)$ is a global weak solution to \eqref{Sys} and additionally
		\begin{equation*}
			\|u_i\|_{\LQ{\infty}} \le C_T \quad \forall i=1,\ldots, m.
		\end{equation*}
	\end{proof}
\subsubsection{Uniform-in-time estimates}\label{subsec4}

\begin{lemma}\label{L1-bound-uniform}
	Assume \eqref{F1}, \eqref{F2} and \eqref{F3} with either $K_1 < 0$ or $K_1 = K_2 = 0$. Then, there exists a constant $M$ independent of time such that
	\begin{equation}\label{desire-L1}
		\sup_{t\geq 0}\|u_i(t)\|_{\LO{1}} \leq M \quad \forall i=1,\ldots, m.
	\end{equation}
\end{lemma}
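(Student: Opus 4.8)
The plan is to rerun the argument of Lemma~\ref{L1-bound}, with two changes: apply it directly to the limit solution $u$ (legitimate, since $u$ is itself a weak solution of \eqref{Sys} with $F_i(u)\in L^2(0,T;\LO{2})$, so every step of that proof — the truncations $h_\delta,H_\delta$, the nonnegativity of the diffusion term, the drift estimate, and the $\delta\to0$ passage, now using $|\varphi^\delta|\le|F_i(u)|\in L^1$ for dominated convergence — carries over verbatim with $F_i(u)$ in place of $F_i^\eps(\ue)$), and integrate over an arbitrary time window $[t_1,t]$ rather than $[0,t]$. This produces, for all $0\le t_1\le t$ and all $i$,
\[
\intO u_i(\cdot,t)\,dx\le\intO u_i(\cdot,t_1)\,dx+\int_{t_1}^t\intO F_i(u)\,dx\,ds .
\]
Multiplying by $c_i$, summing over $i$, and using the nonnegativity of $u$ together with \eqref{F3}, we obtain for $Z(t):=\sumi c_i\|u_i(t)\|_{\LO{1}}$ that
\[
Z(t)\le Z(t_1)+\int_{t_1}^t\Bigl(K_1\intO\sumi u_i\,dx+K_2|\Omega|\Bigr)\,ds,\qquad 0\le t_1\le t .
\]

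If $K_1=K_2=0$ the integrand is non-positive, so $Z$ is non-increasing and $Z(t)\le Z(0)$ for all $t\ge0$. If $K_1<0$, put $c^{*}:=\max_i c_i$ and use $\sumi u_i\ge(c^{*})^{-1}\sumi c_i u_i$ pointwise to get $K_1\intO\sumi u_i\,dx\le-\alpha Z(s)$ with $\alpha:=-K_1/c^{*}>0$, hence $Z(t)\le Z(t_1)+\int_{t_1}^t\bigl(-\alpha Z(s)+K_2|\Omega|\bigr)\,ds$ for all $0\le t_1\le t$. Since $t\mapsto Z(t)$ is continuous (because $u_i\in C([0,T];\LO{2})$), letting $t_1\uparrow t$ gives the differential inequality $D^+Z(t)\le-\alpha Z(t)+K_2|\Omega|$ for every $t\ge0$, and comparison with the solution $\zeta(t)=K_2|\Omega|/\alpha+\bigl(Z(0)-K_2|\Omega|/\alpha\bigr)e^{-\alpha t}$ of the corresponding linear ODE yields $Z(t)\le\zeta(t)\le\max\{Z(0),\,K_2|\Omega|/\alpha\}$ for all $t\ge0$. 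In both cases $Z$ is bounded by a time-independent constant, and since $c_i\|u_i(t)\|_{\LO{1}}\le Z(t)$, the bound \eqref{desire-L1} follows with $M:=(\min_i c_i)^{-1}\max\{Z(0),\,K_2|\Omega|/\alpha\}$ when $K_1<0$, and $M:=(\min_i c_i)^{-1}Z(0)$ when $K_1=K_2=0$.

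The only genuinely delicate point is the passage from the \emph{integral} inequality to a bound uniform in time: the inequality written merely from $t_1=0$ does not by itself preclude linear growth of $Z$, which is why it must be recorded from every initial time $t_1\ge0$ — only then is the Dini-derivative comparison available. Everything else (rederiving the $L^1$ inequality for $u$, the elementary algebra with the constants $c_i$, and solving the scalar linear ODE) is routine. Alternatively, one may carry the whole argument out at the level of the truncated solutions $\ue$ (where $|F_i^\eps(\ue)|\le\eps^{-1}$ is at hand), obtain the bound on $Z^\eps(t):=\sumi c_i\|\ue_i(t)\|_{\LO{1}}$ uniformly in $\eps$ and in $t$, and then pass to the limit using the strong $\LQ{p}$ convergence established in Subsection~\ref{subsec3}; the resulting constant $M$ is the same.
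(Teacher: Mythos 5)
Your proposal is correct and takes essentially the paper's approach: the paper likewise rederives the $L^1$ inequality of Lemma \ref{L1-bound} on arbitrary time windows $[s,t]$ (cf.\ \eqref{ee1}), settles the case $K_1=K_2=0$ by monotonicity, and for $K_1<0$ extracts the uniform bound by a Gronwall-type manipulation of the same windowed inequality — there it integrates $(e^{\sigma s}\varphi(s))'$ with $\varphi(s)=\int_s^t\sumi c_i\intO u_i(\cdot,r)\,dx\,dr$ in the lower limit $s$, whereas you pass to a Dini-derivative comparison using the continuity of $Z$; both hinge, as you correctly stress, on having the inequality from every initial time. One small correction: the bound on $D^+Z(t)$ should be obtained by applying your windowed inequality on $[t,t+h]$ and letting $h\downarrow 0$ (letting $t_1\uparrow t$ only controls the left difference quotient), which is immediate since you have the inequality for all windows.
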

\begin{proof}
	{\color{black}
	By the same arguments in Lemma \ref{L1-bound}, we have, similarly to \eqref{ee1},
	\begin{equation*}
		\sumi c_i\intO u_i(\cdot,t)dx \leq \sumi c_i\intO u_i(\cdot,s)dx + \int_s^t\intO K_1\sumi u_i(\cdot,r)dxdr + K_2|\Omega|(t-s)
	\end{equation*}
	for all $t> s\geq 0$. If $K_1 = K_2 = 0$, the desired bound \eqref{desire-L1} follows immediately. If $K_1<0$, we get for some constant $\sigma>0$, and for all $t>s \ge 0$
	\begin{equation}\label{d1}
		\sumi c_i\intO u_i(t)dx + \sigma \int_s^t\bra{\sumi c_i\intO u_i(r)dx}dr\leq \sumi c_i\intO u_{i}(s)dx + K_2|\Omega|(t-s).
	\end{equation}
	Define $\psi(t) = \sumi c_i\intO u_i(\cdot,t)dx$ and $\varphi(s) = \int_s^t\bra{\sumi c_i\intO u_i(\cdot,r)dx}dr$. It follows from \eqref{d1} that
	\begin{equation*}
		\varphi'(s) = -\sumi c_i\intO u_i(\cdot,s)dx \leq -\psi(t) - \sigma \varphi(s) + K_2|\Omega|(t-s),
	\end{equation*}
	which leads to
	\begin{equation*}
		(e^{\sigma s}\varphi(s))' + e^{\sigma s}\psi(t) \leq K_2|\Omega|e^{\sigma s}(t-s).
	\end{equation*}
	Integrating with respect to $s$ on $(0,t)$, and using $\varphi(t) = 0$, we have
	\begin{equation*}
		-\varphi(0) + \psi(t)\frac{e^{\sigma t} - 1}{\sigma} \leq \frac{K_2|\Omega|}{\sigma}\bra{-t + \frac{e^{\sigma t}-1}{\sigma}}.
	\end{equation*}
	Since $\sigma \varphi(0) - K_2|\Omega|t \leq \sumi c_i\intO u_{i0}(x)dx$ (see \eqref{d1}) it follows that
	\begin{equation*}
		\psi(t) \leq (e^{\sigma t}-1)^{-1}\sumi c_i\intO u_{i,0}(x)dx + K_2|\Omega|\sigma^{-1},
	\end{equation*}
	which finishes the proof of Lemma \ref{L1-bound-uniform}.
	}
\end{proof}
With the $L^1$-bound (uniformly in time), we are ready to show the uniform-in-time boundedness of the solution.
\begin{proof}[Proof of Theorem \ref{thm1}: Uniform-in-time boundedness]
	We first show that for any $1\leq p<\infty$, there exists a constant $C_p>0$ such that
	\begin{equation}\label{e4}
		\sup_{t\geq 0}\|u_i(t)\|_{\LO{p}} \leq C_p \quad \forall i=1,\ldots, m.
	\end{equation}
	By using the $L^p$-energy function $\L_p[u]$ defined in \eqref{Lp} and the computations similar to Lemma \ref{Lp-bound}, we obtain \eqref{e3} which we recall here
	\begin{equation*}
		\L_p' + \frac{\alpha_p}{2}\sumk\intO \bra{|\na u_k|^{p/2} + |u_k|^p}dx \leq C_p\bra{1+\sumk \intO u_k^{p-1+r}dx}.
	\end{equation*}
	We now apply Lemma \ref{GN} to the right hand side, bearing in mind that the $L^1$-bound is uniform in time (thanks to Lemma \ref{L1-bound-uniform}), to get
	\begin{equation*}
		\L_p'(t) + C\intO \sumk u_k(x,t)^pdx \leq C
	\end{equation*}
	which leads to
	\begin{equation*}
		\L_p'(t) + \sigma \L_p(t) \leq C
	\end{equation*}
	for some constant $\sigma>0$. Gronwall's lemma yields
	\begin{equation*}
		\sup_{t\geq 0}\L_p(t) \leq C,
	\end{equation*}
	which implies \eqref{e4}. To finally see that the solution is bounded uniformly in time in sup norm, we use a smooth time-truncated function $\psi: \R \to [0,1]$ with $\psi(s) = 0$ for $s\leq 0$ and $\psi(s) = 1$ for $s\geq 1$, $0\leq \psi' \leq C$ and its shifted version $\psi_\tau(\cdot) = \psi(\cdot-\tau)$ for any $\tau\in \mathbb N$. Let $\tau\in\mathbb N$ be arbitrary. It is straightforward to show that since $u = (u_i)_{i=1,\ldots,m}$ is a weak solution to \eqref{Sys}, the function $\psi_\tau u = (\psi_\tau u_i)_{i=1,\ldots,m}$ is a weak solution to the following
	\begin{equation*}
	\begin{cases}
		\pa_t(\psi_\tau u_i) - \na\cdot(D_i\na(\psi_\tau u_i)) + \na\cdot(B_i(\psi_\tau u_i)) = \psi_\tau' u_i + \psi_\tau F_i(x,t,u), &x\in\Omega, t\in (\tau,\tau+2)\\
		(\psi_\tau u_i)(x,t) =0, &x\in\pa\Omega, t\in (\tau,\tau+2),\\
		(\psi_\tau u_i)(x,\tau) = 0, &x\in\Omega.
	\end{cases}
	\end{equation*}
	Thanks to \eqref{e4} and the polynomial growth \eqref{F5}, we have 
	\begin{equation*}
		\psi_\tau' u_i + \psi_\tau F_i(x,t,u) \leq G_i(x,t,u):= C\bra{1+ \sumk u_k^{\ell}}.
	\end{equation*}
	Thanks to \eqref{e4} for any $1\leq p<\infty$ a constant $C_p>0$ exists such that
	\begin{equation*}
		\|G_i(x,t,u)\|_{L^{p}(\Omega\times(\tau,\tau+2))} \leq C_p \quad \forall i=1,\ldots, m.
	\end{equation*}
	Therefore, by the smooth effect of parabolic operator $\pa_tv - \na\cdot(D_i\na v) + \na\cdot(B_i v )$ (see e.g. \cite[Proposition 3.1]{nittka2014inhomogeneous} or \cite{LSU68}), similar to Proposition \ref{pro1},  we get
	\begin{equation*}
		\|\psi_\tau u_i\|_{L^{\infty}(\Omega\times(\tau,\tau+2))} \leq C \quad \forall i=1,\ldots, m,
	\end{equation*}
	where $C$ is a constant {\it independent of $\tau\in \mathbb N$}. Thanks to $\psi_\tau \geq 0$ and $\psi|_{(\tau+1,\tau+2)} \equiv 1$, we obtain finally the uniform-in-time bound
	\begin{equation*}
		\sup_{t\geq 0}\|u_i(t)\|_{\LO{\infty}} \leq C \quad \forall i=1,\ldots, m.
	\end{equation*}
\end{proof}


\subsection{Generalizations: Proof of Theorems \ref{thm2} and \ref{thm3}}\label{subs2}
\begin{proof}[Proof of Theorem \ref{thm2}] We give a formal proof as its rigor can be easily obtained through approximation. Since $h_k(\cdot)$ is convex, we have
	\begin{align*}
		\na\cdot(D_i\na(h_i(u_i))) &= \na\cdot(D_ih_i'(u_i)\na u_i)\\
		&= h'(u_i)\na\cdot(D_i\na u_i) + h_i''(u_i)D_i|\na u_i|^2\\
		&\geq h'(u_i)\na \cdot(D_i\na u_i).
	\end{align*}
	Therefore, by defining $v_i:= h_i(u_i)\geq 0$ we have
	\begin{equation}\label{ee2}
		\pa_t v_i - \na\cdot(D_i\na v_i) \leq G_i(x,t,u):= h_i'(u_i)F_i(x,t,u), \quad x\in\Omega, \; t>0,
	\end{equation}
	with initial data $v_i(x,0) = h_i(u_{i,0}(x))$ and homogeneous Dirichlet boundary condition $v_i(x,t) = 0$ for $x\in\pa\Omega$ and $t>0$. Thanks to \eqref{H1} and  we have
	\begin{equation*}
		\sumi G_i(x,t,u) \leq K_5\sumi v_i + K_6
	\end{equation*}
	and
	\begin{equation*}
		A\begin{pmatrix}G_i(x,t,u)\\ \cdots\\ G_m(x,t,u)\end{pmatrix} \leq K_7\overrightarrow{1}\bra{\sumi v_i + 1}^r.
	\end{equation*}
	We can now reapply the methods in the proof of Theorem \ref{thm1}, keeping in mind that though $v_i$ satisfies \eqref{ee2} with an inequality all calculations are still available thanks to its non-negativity, to obtain $v_i \in L^\infty_{\text{loc}}(0,\infty;\LO{\infty})$, and in case $K_5<0$ or $K_5 = K_6 = 0$ in \eqref{H1},
	\begin{equation*}
		\text{ess}\sup_{t\geq 0}\|v_i(t)\|_{\LO{\infty}} <+\infty, \quad \forall i=1,\ldots, m.
	\end{equation*}
	Thanks to the assumption \eqref{H1}, the global existence and boundedness of \eqref{Sys} follow immediately.
\end{proof}
\begin{proof}[Proof of Theorem \ref{thm3}]
	For the global existence, we only need to show that for any $1\leq p<\infty$ and any $T>0$, there exists $C_{T,p}>0$ such that
	\begin{equation}\label{e6}
		\|u_i\|_{\LQ{p}} \leq C_{T,p} \quad \forall i=1,\ldots, m.
	\end{equation}
	The rest follows exactly as in Proposition \ref{pro1}. To show \eqref{e6}, we utilize the $L^p$-energy functions $\L_p(t)$ constructed in Lemma \ref{Lp-bound}. Repeat the arguments in the proof of Lemma \ref{Lp-bound} until \eqref{e3} we end up with
	\begin{equation}\label{e7}
		\L_p'(t)+ \frac{\alpha_p}{2}\sumk \intO\bra{|\na u_k^{p/2}|^2 + |u_k|^p}dx \leq C_p\bra{1+\sumk \intO u_k^{p-1+r}dx}.
	\end{equation}
	
	\medskip
	\noindent\underline{\it In case \eqref{La} holds}, we apply Lemma \ref{GN} to estimate
	\begin{equation*}
		\intO u_{k}^{p-1+r}dx \leq \frac{\alpha_p}{2}\intO \bra{|\na u_k^{p/2}|^2 + |u_k|^p}dx + C_{T}
	\end{equation*}
	where $C_{T}$ depends on $\F(T)$. Inserting this into \eqref{e7} yields
	\begin{equation*}
		\L_p'(t) + \delta \L_p(t) \leq C_{p,T},
	\end{equation*}
	which implies \eqref{e6}, thanks to Gronwall's lemma.
	
	\medskip
	\noindent\underline{\it In case \eqref{Lb} holds}, we integrate \eqref{e7} in time to obtain
	\begin{equation}\label{e8}
	\begin{aligned}
		&\sup_{t\in (0,T)}\L_p(t) + \frac{\alpha_p}{2}\sumk \intQT \bra{|\na u_k^{p/2}|^2 + |u_k|^p}dxdt\\
		&\leq \L_p(0) + C_pT + C_p\sumk\intQT u_k^{p-1+r}dxdt. 
	\end{aligned}
	\end{equation}
	Denote by $y_k:= u_k^{p/2}$. The left hand side of \eqref{e8} can be estimated below by
	\begin{equation}\label{e8_1}
		\text{LHS of (\ref{e8})} \geq C\sumk\bra{\|y_k\|_{L^{\infty}(0,T;\LO{2})}^2 + \|y_k\|_{L^2(0,T;H^1(\Omega))}^2}.
	\end{equation}
	For the right hand side of \eqref{e8}, we first consider 
	\begin{equation}\label{be_vt}
	\vt > p-1+r
	\end{equation}
	as a constant to be determined later. Of course we are only interested in the case when $p-1+r>b$, otherwise the right hand side of \eqref{e8} is bounded thanks to \eqref{Lb}. By the interpolation inequality we have
	\begin{equation}\label{e9}
		\intQT u_k^{p-1+r}dxdt = \|u_k\|_{\LQ{p-1+r}}^{p-1+r}\leq \|u_k\|_{\LQ{b}}^{\theta(p-1+r)}\|u_k\|_{\LQ{\vt}}^{(1-\theta)(p-1+r)},
	\end{equation}
	where $\theta\in (0,1)$ satisfies
	\begin{equation*}
		\frac{1}{p-1+r} = \frac{\theta}{b} + \frac{1-\theta}{\vt},
	\end{equation*}
	which implies 
	\begin{equation*}
		(1-\theta)(p-1+r) = \frac{\vt(p-1+r-b)}{\vt - b}.
	\end{equation*}
	Using this, and taking into account \eqref{Lb}, \eqref{e9} implies
	\begin{equation}\label{e9_1}
	\begin{aligned}
		\intQT u_k^{p-1+r}dxdt &\leq \F(T)^{\theta(p-1+r)}\|u_k\|_{\LQ{\vt}}^{\frac{\vt(p-1+r-b)}{\vt - b}}\\
		&= \F(T)^{\theta(p-1+r)}\bra{\intQT u_k^{\vt}dxdt}^{\frac{p-1+r-b}{\vt - b}}\\
		&= C_T\bra{\intQT y_k^{\frac{2\vt}{p}}dxdt}^{\frac{p-1+r-b}{\vt - b}}.
	\end{aligned}
	\end{equation}
	For $p$ large enough, we choose $\vt$ close enough to (but bigger than) $p-1+r$ so that $H^1(\Omega)\hookrightarrow \LO{\frac{2\vt}{p}}$. That means $\vt$ is arbitrary for $n\leq 2$ and
	\begin{equation}\label{up_vt1}
		\frac{2\vt}{p} \leq \frac{2n}{n-2} \Leftrightarrow \vt \leq \frac{pn}{n-2} \quad\text{ for }\quad n\geq 3.
	\end{equation}
	Thus, we can use the Gagliardo-Nirenberg's inequality to estimate
	\begin{equation}\label{e10}
		\intO y_k^{\frac{2\vt}{p}}dx = \|y_k\|_{\LO{\frac{2\vt}{p}}}^{\frac{2\vt}{p}} \leq C\|y_k\|_{H^1(\Omega)}^{\alpha\cdot \frac{2\vt}{p}}\|y_k\|_{\LO{2}}^{(1-\alpha)\cdot \frac{2\vt}{p}}
	\end{equation}
	where $\alpha\in (0,1)$ satisfies
	\begin{equation*}
		\frac{p}{2\vt} = \bra{\frac 12 - \frac 1n}\alpha + \frac{1-\alpha}{2}.
	\end{equation*}
	From this
	\begin{equation*}
		\alpha\cdot\frac{2\vt}{p} = \frac{n(\vt - p)}{p} \quad \text{ and } \quad (1-\alpha)\cdot\frac{2\vt}{p} = \frac{np - (n-2)\vt}{p}.
	\end{equation*}
	Therefore, we obtain from \eqref{e10} that
	\begin{equation*}
		\intO y_k^{\frac{2\vt}{p}}dx \leq C\|y_k\|_{H^1(\Omega)}^{\frac{n(\vt-p)}{p}}\|y_k\|_{\LO{2}}^{\frac{np-(n-2)\vt}{p}}.
	\end{equation*}
	It follows that
	\begin{align*}
		\intQT y_k^{\frac{2\vt}{p}}dxdt &\leq C\int_0^T\|y_k\|_{H^1(\Omega)}^{\frac{n(\vt-p)}{p}}\|y_k\|_{\LO{2}}^{\frac{np-(n-2)\vt}{p}} dt\\
		&\leq C\|y_k\|_{L^{\infty}(0,T;\LO{2})}^{\frac{np-(n-2)\vt}{p}}\int_0^T\|y_k\|_{H^1(\Omega)}^{\frac{n(\vt-p)}{p}}dt.
	\end{align*}
	We choose 
	\begin{equation}\label{up_vt2}
	\frac{n(\vt - p)}{p}\leq 2 \Leftrightarrow \vt \leq \frac{p(n+2)}{n},
	\end{equation}
	which is possible since $p - 1 + r < p + \frac{2p}{n}$ for $p$ large enough. Thus, by H\"older's inequality,
	\begin{equation*}
		\intQT y_k^{\frac{2\vt}{p}}dxdt \leq C_T\|y_k\|_{L^{\infty}(0,T;\LO{2})}^{\frac{np-(n-2)\vt}{p}}\|y_k\|_{L^2(0,T;H^1(\Omega))}^{\frac{n(\vt-p)}{p}}.
	\end{equation*}
	Inserting this into \eqref{e9_1} yields
	\begin{equation}\label{e11}
		\intQT u_k^{p-1+r}dxdt \leq C_T\bra{\|y_k\|_{L^{\infty}(0,T;\LO{2})}^{\frac{np-(n-2)\vt}{p}}\|y_k\|_{L^2(0,T;H^1(\Omega))}^{\frac{n(\vt-p)}{p}}}^{\frac{p-1+r-b}{\vt - b}}.
	\end{equation}
	We check that we can choose $\vt$ such that
	\begin{equation}\label{e12}
		\frac{p-1+r-b}{\vt - b}\cdot\bra{\frac{np-(n-2)\vt}{p} + \frac{n(\vt-p)}{p}} < 2.
	\end{equation}
	Indeed, this is equivalent to 
	\begin{align}\label{be_vt2}
		\frac{p-1+r-b}{\vt - b}\cdot \frac{2\vt}{p}<2 &\Leftrightarrow (p-1+r-b)\vt < (\vt-b)p\nonumber\\
		&\Leftrightarrow \vt > \frac{bp}{1+b-r}.
	\end{align}
	We now check that we can choose $\vt$ which satisfies all the conditions \eqref{be_vt}, \eqref{up_vt1}, \eqref{up_vt2} and \eqref{be_vt2}. This is fulfilled provided
	\begin{equation*}
		\frac{bp}{1+b-r}<\frac{p(n+2)}{n}, \quad\text{which is equivalent to}\quad r < 1 + \frac{2b}{n+2},
	\end{equation*}
	and this is exactly our assumption \eqref{assump_r}. Now by \eqref{e12}, we can use Young's inequality of the form
	\begin{equation*}
		x^{\lam_1}y^{\lam_2} \leq \eps(x^2+y^2) + C_{\eps} \quad \text{ for } \quad \lam_1+\lam_2 < 2,
	\end{equation*}
	to estimate \eqref{e11} further as
	\begin{equation*}
		\intQT u_k^{p-1+r}dxdt \leq \eps\bra{\|y_k\|_{L^{\infty}(0,T;\LO{2})}^2 + \|y_k\|_{L^2(0,T;H^1(\Omega))}^2} + C_{T,\eps}.
	\end{equation*}
	Thus
	\begin{equation*}
		\text{RHS of (\ref{e8})} \leq \L_p(0) + \eps\sumk \bra{\|y_k\|_{L^{\infty}(0,T;\LO{2})}^2 + \|y_k\|_{L^2(0,T;H^1(\Omega))}}^2 + C_{T,\eps}.
	\end{equation*}
	Combining this with \eqref{e8_1} gives us the desired estimate \eqref{e6}.
	
	\medskip
	For the uniform-in-time bounds, we use similar arguments to the proof of Theorem \ref{thm1} study \eqref{Sys} (multiplied by a time-truncated function $\psi_\tau$) on each interval $(\tau,\tau+2)$ to ultimately obtain
	\begin{equation*}
		\sup_{\tau\in \mathbb N}\|u_i\|_{L^{\infty}(\Omega\times(\tau,\tau+2))} < +\infty \quad \forall i=1,\ldots, m.
	\end{equation*}
	We leave the details for the interested reader.
\end{proof}
\subsection{Other boundary conditions or quasilinear systems}\label{subs3}
First, we state the definition of weak solutions to \eqref{Sys_other_bc} as it is different from that of \eqref{Sys}.
\begin{definition}\label{def_diff_bc}
	A vector of non-negative concentrations $u = (u_1, \ldots, u_m)$ is called a weak solution to \eqref{Sys_other_bc} on $(0,T)$ if
	\begin{equation*}
		u_i\in C([0,T]; \LO{2})\cap L^2(0,T;H^1(\Omega)), \quad F_i(u)\in L^2(0,T;\LO{2}),
	\end{equation*}
	with $u_i(\cdot,0)= u_{i,0}(\cdot)$ for all $i=1,\ldots, m$, and for any test function $\varphi \in L^2(0,T;H^1(\Omega))$ with $\pa_t\varphi \in L^2(0,T;H^{-1}(\Omega))$, it holds that
	\begin{equation}\label{weak_bc}
	\begin{aligned}
		\intO u_i(x,t)\varphi(x,t)dx\bigg|_{t=0}^{t=T} - \intQT u_i\pa_t\varphi dxdt + \intQT D_i(x,t)\na u_i \cdot \na\varphi dxdt\\
		+\alpha_i\int_0^T\int_{\pa\Omega}u_i\varphi d\H^{n-1}dt = \intQT F_i(x,t,u)\varphi dxdt.
	\end{aligned}
	\end{equation}
\end{definition}
\begin{proof}[Proof of Theorem \ref{thm3_1}]
	The proof of this theorem is similar to that of Theorems \ref{thm1} and \ref{thm3}, except for the fact that the $L^1$-norm can be obtained in a different, and easier, way. Since $\varphi \equiv 1$ is an admissible test function we get from \eqref{weak_bc} and \eqref{F3} that
	\begin{align*}
		\sumi \intO c_iu_i(\cdot,t) dx\bigg|_{t=0}^{t=T} + \sumi c_i\alpha_i\int_0^T\int_{\pa\Omega} u_id\H^{n-1}dt\\
		\leq K_1\intQT u_i dxdt + K_2|\Omega|T.
	\end{align*}
	The $L^1$-bound, which is uniform in time in case $K_1<0$ or $K_1 = K_2 =0$, then follows immediately from the Gronwall's inequality.
\end{proof}
\begin{proof}[Proof of Theorem \ref{thm3_2}]
	The definition of weak solutions of \eqref{Sys_quasilinear} is similar to Definition \ref{def_weak}. Note that the Galerkin method in the proof of Lemma \ref{lem:existence_truncated} is not applicable due to the nonlinear dependence of $A_i(x,t,u)$ on $u$. We resort to a different strategy.
	
	For fixed $\eps>0$, we consider the truncated system: for all $i=1,\ldots, m$,
	\begin{equation}\label{Sys_truncated_quasilinear}
		\begin{cases}
			\pa_t \ue_i - \na\cdot\bra{A_i^\eps(x,t,\ue)\na \ue_i} = F_i^\eps(x,t,\ue), & x\in\Omega, t>0,\\
			u_i(x,t) = 0, & x\in\pa\Omega, t>0,\\
			u_i(x,0) = \ue_{i,0}(x), & x\in\Omega,
		\end{cases}
	\end{equation}
	where for all $i=1,\ldots, m$,
	\begin{align*}
		\bullet\quad & F_i^{\eps}(x,t,\ue) = F_i(x,t,\ue)\sbra{ 1+\eps\sumj |F_j(x,t,\ue)|}^{-1},\\
		\bullet\quad& \ue_{i,0}\in C^2(\Omega)\cap C(\bar \Omega) \; \text{ and } \; \lim_{\eps\to 0}\|\ue_{i,0} - u_{i,0}\|_{\LO{\infty}} = 0,\\
		\bullet\quad& \Omega\times \R_+\times \R^m \ni (x,t,\omega)\mapsto A_i^\eps(x,t,\omega) \text{ is H\"older continuous in each component},\\
		&\quad \frac{\wh\lam}{2}|\xi|^2 \leq \xi^\top A_i^\eps(x,t,\omega)\xi \quad \forall (x,t,\omega)\in \Omega\times\R_+\times \R_+^m, \; \forall \xi\in\R^n,\\
		&\quad A_i^\eps(x,t,\omega)\to A_i(x,t,\omega) \quad \forall \omega\in\R^m, \; \text{ a.e.} (x,t)\in\Omega\times\R_+.
	\end{align*}
	Thanks to these regularization, the existence of a weak solution $\ue = (\ue_i)_{i=1,\ldots,m}$ to \eqref{Sys_truncated_quasilinear}  follows from classical results, see e.g. \cite{palagachev2011quasilinear}. Moreover, this solution is unique \cite{nittka2013quasilinear}. Thanks to this uniqueness, we can also get the non-negativity of $\ue$ using similar computations to that of Lemma \ref{lem:existence_truncated}.  Following the arguments of Lemma \ref{L1-bound}, Lemma \ref{Lp-bound}, Proposition \ref{pro1}, and Theorem \ref{thm3} we obtain that
	\begin{equation*}
		\|\ue_i\|_{\LQ{\infty}} \leq C_T \quad \forall i=1,\ldots, m.
	\end{equation*}	
	Note that this is possible since we used only the uniform ellipticity of $A_i^\eps$ to derive this bound, which is uniformly in $\eps>0$. From \eqref{Sys}, we obtain $\{\ue_i\}_{\eps>0}$ is bounded in $L^2(0,T;H_0^1(\Omega))$ and $\{\pa_t\ue_i\}_{\eps>0}$ is bounded in $L^2(0,T;H^{-1}(\Omega))$. The Aubin-Lions lemma gives strong in $L^2(Q_T)$, and consequently point-wise, convergence $\ue_i \to u_i$ (up to a subsequent) for all $i=1,\ldots, m$. Passing to the limit $\eps\to0$ for the weak formulation of \eqref{Sys_truncated_quasilinear}, we only have to show that
	\begin{equation*}
		\intQT A_i^\eps(x,t,\ue)\na \ue_i\cdot \na \varphi dxdt \xrightarrow{\eps \to 0} \intQT A_i(x,t,u)\na u_i \cdot \na \varphi dxdt.
	\end{equation*}
	This is guaranteed thanks to the boundedness of $A_i^\eps$, $A_i^\eps(x,t,\ue)\to A_i(x,t,u)$ almost everywhere, and $\ue_i \rightharpoonup u_i$ weakly in $L^2(0,T;H_0^1(\Omega))$. The uniform-in-time bound in sup-norm can be obtained in similar ways as in previous theorems, so we omit the details.
	
	If $A_i$ is merely bounded in $(x,t)$, it seems not possible to show uniqueness of weak solutions. Under the stronger assumption $(iv)$ that, $A(\cdot,\cdot,\omega)$ is H\"older continuous on $\Omega\times \R_+$, one can show that the gradient $\na u_i$ is sup-norm bounded, and the uniqueness follows. We refer the interested reader to \cite{nittka2013quasilinear} for more details.
	\end{proof}
\section{Applications to a model of an infectious disease}\label{sec:applications}
{\color{black}
We apply our theoretical results to a model for the spread of infectious disease within a host population that occupies a highly heterogeneous habitat. Related problems have been considered in many works in the literature, see e.g. \cite{fitzgibbon2001mathematical,bendahmane2002existence,eisenberg2013cholera,yamazaki2017global,yin2020reaction} and the references therein. Here we extend the model considered in \cite{yin2020reaction}. We consider a population confined in a bounded domain $\Omega\subset\mathbb R^n$, $n\geq 1$. The population is compartmentalized into the susceptible class $S$, the infected class $I$, and the recovered class $R$, whose densities are denoted by $s(x,t)$, $i(x,t)$ and $r(x,t)$, respectively. Moreover, we denote by $b(x,t)$ the density of the pathogen associated with the disease. We consider the following system
\begin{equation}\label{irs_Sys}
\begin{cases}
	\pa_t s = \na\cdot(D_S(x,t)\na s) - \sigma_I(x)si - \sigma_B(x) sb + \gamma r, &x\in\Omega, \; t>0,\\
	\pa_t i = \na\cdot(D_I(x,t)\na i) + \sigma_I(x) si + \sigma_B(x) sb - (\lam+\alpha)i, &x\in\Omega, \; t>0,\\
	\pa_t r = \na\cdot(D_R(x,t)\na r) + \lam i - \gamma r, &x\in\Omega, \; t>0,\\
	\pa_t b = \na\cdot(D_B(x,t)\na b + c(x,t)b) + \phi(x) i - \delta b, &x\in\Omega, \; t>0.
\end{cases}
\end{equation}
where we assume
\begin{itemize}
	\item The diffusivities $D_S(x,t), D_I(x,t), D_R(x,t)$ and $D_B(x,t)$ are uniformly bounded on $\Omega$ and there exists a $d_*>0$ so that
	\begin{equation}\label{D}
		0 < d_* \leq \min_{x\in\Omega, t>0} \{D_S(x,t), D_I(x,t), D_R(x,t), D_B(x,t)\}.
	\end{equation}
	\item The exists a $c^* > 0$ so that 
	\begin{equation}\label{C}
		\max_{1\leq j\leq n}\sup_{t>0}\|c_j(t)\|_{\LO{\infty}} < c^*.
	\end{equation}
	\item There exist $\sigma_*$ and $\sigma^*$ such that 
	\begin{equation}\label{sigma}
		0<\sigma_* \leq \min_{x\in\Omega} \{\sigma_I(x), \sigma_B(x)\} \leq \max_{x\in\Omega}\{\sigma_I(x),\sigma_B(x)\} \leq \sigma^*.
	\end{equation}
\end{itemize}
The system \eqref{irs_Sys} is subject to no-flux boundary condition 
for $x\in\pa\Omega$, $t>0$,
\begin{equation*}
	D_S\na s\cdot {\nu} = D_I\na i\cdot \nu = D_R\na r\cdot\nu = (D_B\na b + c(x,t)b)\cdot \nu = 0,
\end{equation*}
and bounded, non-negative initial data for $x\in\Omega$,
\begin{equation*}
	s(x,0) = s_0(x), \; i(x,0) = i_0(x), \; r(x,0) = r_0(x), \; b(x,0) = b_0(x).
\end{equation*}
For more discussion about the modeling of \eqref{irs_Sys}, we refer the reader to \cite{yin2020reaction}.}

We are in position to show the global existence, boundedness and asymptotic behavior of solutions to \eqref{irs_Sys}. It is emphasized that the uniform-in-time boundedness plays an important role in establishing the large time behavior of the solution.
\begin{theorem}\label{thm4}
	Assume \eqref{D}, \eqref{C}, \eqref{sigma}, positive parameters $\gamma, \lam, \alpha, \delta>0$, $\phi(x)\geq 0$ a.e. $x\in\Omega$ and
	\begin{equation}\label{cond_phi}
		\|\phi\|_{\LO{\infty}} \le \alpha.
	\end{equation}
	For any bounded, non-negative initial data, there exists a unique global bounded weak solution to \eqref{Sys} which is bounded uniformly in time, i.e.
	\begin{equation}\label{irs_bound}
		\sup_{t\geq 0}\big\{\|s(t)\|_{\LO{\infty}}, \|i(t)\|_{\LO{\infty}}, \|r(t)\|_{\LO{\infty}}, \|b(t)\|_{\LO{\infty}} \} < +\infty.
	\end{equation}
	Moreover, we have the following asymptotic behavior for any $1<p<\infty$,
	\begin{equation}\label{decay}
		\lim_{t\to\infty}\bra{\|i(t)\|_{\LO{p}} + \|r(t)\|_{\LO{p}} + \|b(t)\|_{\LO{p}} + \|s(t)-s_\infty\|_{\LO{p}}} = 0
	\end{equation}
	where 
	\begin{equation*}
		0< s_\infty = \intO(s_0+i_0+r_0)dx - \alpha\int_0^\infty \intO i(x,z)dxdz < +\infty.
	\end{equation*}
\end{theorem}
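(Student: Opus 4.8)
The plan is to reduce \eqref{irs_Sys} to the abstract results above and then extract the long-time behaviour \eqref{decay} by a cascade of mass-balance and energy estimates; the positivity of $s_\infty$ will be the delicate point. \textbf{Step 1: checking the structural hypotheses.} Writing $u=(s,i,r,b)$, the reaction field is quasi-positive and has quadratic growth, so \eqref{F1}, \eqref{F2}, \eqref{F5} hold. Summing the four nonlinearities with weights $c_1=\dots=c_4=1$ and using \eqref{cond_phi} gives $F_s+F_i+F_r+F_b=(\phi-\alpha)i-\delta b\le 0$, i.e. \eqref{F3} with $K_1=K_2=0$ (mass dissipation). For the intermediate sum condition \eqref{F4} I would use the lower triangular matrix with rows $(1,0,0,0),(1,1,0,0),(0,0,1,0),(0,0,0,1)$: the only super-diagonal quadratic term, $-\sigma_B sb$ in $F_s$, is non-positive, and $F_s+F_i=\gamma r-(\lam+\alpha)i$ is linear, so \eqref{F4} holds with $r=1<1+\tfrac2n$ for every $n\ge1$. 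The $b$-equation carries the bounded drift $c(x,t)$ by \eqref{C} and the boundary conditions are of no-flux type; hence, combining the treatment of the advection term as in \eqref{e1} (proof of Lemma \ref{Lp-bound}) with the no-flux setting of Theorem \ref{thm3_1} — for which $\varphi\equiv1$ is an admissible test function and the boundary terms created by the integration by parts in the $L^p$-energy computation cancel because the no-flux condition for $b$ is $(D_B\na b+cb)\cdot\nu=0$ — one obtains a unique global non-negative weak solution with the local-in-time $L^\infty$ bound, and, since $K_1=K_2=0$, the uniform-in-time bound \eqref{irs_bound}.

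\textbf{Step 2: decay of $i$, $b$, $r$.} Adding the $s$-, $i$- and $r$-equations and integrating over $\Omega$ (all no-flux fluxes vanish) gives $\frac{d}{dt}\intO(s+i+r)\,dx=-\alpha\intO i\,dx$, hence $\intO(s+i+r)(t)\,dx+\alpha\int_0^t\intO i\,dx\,dz=\intO(s_0+i_0+r_0)\,dx$; non-negativity of the left side yields $i\in L^1(\Omega\times(0,\infty))$ with $\intO i(t)\,dx\le\frac1\alpha\intO(s_0+i_0+r_0)\,dx$. Because $\frac{d}{dt}\intO i\,dx$ is bounded (thanks to \eqref{irs_bound}), $t\mapsto\intO i(t)\,dx$ is Lipschitz and integrable on $(0,\infty)$, so $\intO i(t)\,dx\to0$, and interpolation with the uniform $L^\infty$ bound gives $\|i(t)\|_{\LO p}\to0$ for all $1<p<\infty$. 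Integrating the $b$- and $r$-equations, $\frac{d}{dt}\intO b\,dx+\delta\intO b\,dx=\intO\phi i\,dx\to0$ and $\frac{d}{dt}\intO r\,dx+\gamma\intO r\,dx=\lam\intO i\,dx\to0$, so by Gronwall $\intO b(t)\,dx,\intO r(t)\,dx\to0$ and hence $\|b(t)\|_{\LO p},\|r(t)\|_{\LO p}\to0$; integrating these relations over $(0,\infty)$ also gives $\int_0^\infty\intO(i+b)\,dx\,dt<\infty$.

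\textbf{Step 3: homogenisation of $s$ and identification of $s_\infty$.} Set $g:=-\sigma_I si-\sigma_B sb+\gamma r$; by Step 2, $\|g(t)\|_{\LO p}\to0$. With $\bar s(t):=|\Omega|^{-1}\intO s(t)\,dx$, the mean-zero function $w:=s-\bar s$ solves $\pa_t w-\na\cdot(D_S\na w)=g-\bar g$ with no-flux boundary conditions; testing with $w$, using the ellipticity $D_S\ge d_*$ and the Poincaré–Wirtinger inequality $\|w\|_{\LO2}\le C_P\|\na w\|_{\LO2}$, gives $\frac{d}{dt}\|w(t)\|_{\LO2}^2+\frac{d_*}{C_P^2}\|w(t)\|_{\LO2}^2\le C\|g(t)\|_{\LO2}^2$, so $\|w(t)\|_{\LO2}\to0$ and, interpolating with $\|w(t)\|_{\LO\infty}\le2\|s(t)\|_{\LO\infty}\le C$, $\|w(t)\|_{\LO p}\to0$. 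Since the mass identity of Step 2 forces $\intO s(t)\,dx\to\intO(s_0+i_0+r_0)\,dx-\alpha\int_0^\infty\intO i\,dx\,dt=s_\infty$ (because $\intO i(t),\intO r(t)\to0$, and $0\le s_\infty\le\intO(s_0+i_0+r_0)<\infty$), this establishes the convergence \eqref{decay} of $s$ to its spatially homogeneous state determined by $s_\infty$.

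\textbf{Step 4: $s_\infty>0$ — the main obstacle.} It remains to exclude $\intO s(t)\,dx\to0$. For fixed $\eps_0>0$ I would test the $s$-equation with $-(s+\eps_0)^{-1}\in L^2(0,T;H^1(\Omega))$: the diffusion contribution has the favourable sign, and using $s(s+\eps_0)^{-1}\le1$ and $\gamma r(s+\eps_0)^{-1}\ge0$ one obtains $\frac{d}{dt}\bigl(-\intO\log(s+\eps_0)\,dx\bigr)\le\sigma^*\intO(i+b)\,dx$; integrating in time and using $\int_0^\infty\intO(i+b)\,dx\,dt<\infty$ from Step 2 shows $-\intO\log(s(t)+\eps_0)\,dx\le V_*(\eps_0)$ uniformly in $t$. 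Jensen's inequality then gives $\intO s(t)\,dx\ge|\Omega|e^{-V_*(\eps_0)/|\Omega|}-|\Omega|\eps_0$. If $s_0\ge c_0>0$, $V_*(\eps_0)$ is bounded independently of $\eps_0$ and letting $\eps_0\to0$ yields $\intO s(t)\,dx$ bounded below by a fixed positive constant, hence $s_\infty>0$; for general non-trivial non-negative data one first applies the De Giorgi–Nash–Moser / Harnack inequality to the scalar equation for $s$ (valid for merely bounded measurable $D_S$) to get $s(\cdot,t_1)\ge c_1>0$ for some $t_1>0$, and then runs the argument on $[t_1,\infty)$. This logarithmic estimate, together with the quantitative control of $\int_0^\infty\intO(i+b)$, is the technically hardest part; every other ingredient is a direct application of the earlier theorems and standard parabolic estimates.
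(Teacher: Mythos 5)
Your proposal is correct and shares the paper's overall architecture: verify \eqref{F2}, \eqref{F3} (with $K_1=K_2=0$ via \eqref{cond_phi}), an order-one intermediate sum condition and \eqref{F5}, invoke the no-flux variant (Theorem \ref{thm3_1}, adapted to the drift in the $b$-equation) for global existence and \eqref{irs_bound}, then use the mass identity, the decay of $i,r,b$, and a Poincar\'e--Wirtinger argument to homogenize $s$ and identify $s_\infty$. Where you genuinely diverge is the decay step: the paper only extracts $\int_t^{t+1}\|i(z)\|_{\LO{1}}dz\to0$ from the time-integrability of $\|i\|_{\LO{1}}$ and then upgrades to pointwise-in-time $L^p$ decay by testing the equations on unit windows $(\tau,\tau+2)$ with the cut-off $\varphi_\tau$ and $p\varphi_\tau^p b^{p-1}$ (and similarly for $r$, $i$); you instead get pointwise decay of the spatial averages directly --- $t\mapsto\intO i\,dx$ is Lipschitz and integrable on $(0,\infty)$, and $\intO b\,dx$, $\intO r\,dx$ obey linear ODE inequalities with decaying sources --- and then interpolate with the uniform $\LO{\infty}$ bound. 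Your route is more elementary and avoids the cut-off energy computations; the paper's window-testing argument is more robust (it does not rely on writing ODEs for the averages) but is otherwise equivalent here. Both proofs of the homogenization of $s$ are essentially identical.

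On Step 4: note that the paper's own proof never argues the positivity $s_\infty>0$ (it only derives the limit formula), so your logarithmic estimate with test function $-(s+\eps_0)^{-1}$, combined with $i,b\in L^1(\Omega\times(0,\infty))$ and Jensen, is an addition rather than a reproduction; it is sound for data with $s_0\ge c_0>0$ (the test function is admissible since it is bounded with gradient $(s+\eps_0)^{-2}\na s\in L^2$). For general non-negative data the Harnack/strong-positivity step you sketch needs a nondegeneracy hypothesis --- if $s_0=i_0=r_0\equiv0$ then $s_\infty=0$, so the claim cannot hold for all non-negative data --- and the up-to-the-boundary weak Harnack inequality for merely measurable $D_S$ under no-flux conditions, while standard, deserves a citation or a short chaining argument rather than a one-line appeal.
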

One can interpret the convergence of $s$ as meaning that the final susceptible population equals to the difference of the initial total population $\intO(s_0 + i_0 + r_0)dx$ and the total  mortality over time $\alpha\int_0^{\infty}\intO i(x,z)dxdz$, which is expected from realistic situations.
\begin{proof}
	Denote by $u=(s,i,r,b)$ the vector of unknowns, and $F_s, F_i, F_r$ and $F_b$ the nonlinearities in the equations of $s, i, r$ and $b$, respectively. Thanks to \eqref{cond_phi}, we have $F_s(u) \leq \gamma r$, $F_s(u) + F_i(u)\leq \gamma r$, $F_s(u) + F_i(u) + F_r(u) \leq 0$, and $F_s(u) + F_i(u) + F_r(u) + F_b(u) \leq 0$. The global existence and uniform boundedness \eqref{irs_bound} of a unique weak solution to \eqref{irs_Sys} then follow from Theorem \ref{thm3_1} with slight modifications (due to the different boundary condition of $b$). 
	
	\medskip
	We now turn to the large time behavior. By summing the equations of $s, i, r$ and taking $\varphi \equiv 1$ as a test function, we have
	\begin{equation}\label{ee0}
		\intO (s(x,t) + i(x,t) + r(x,t)) + \alpha \int_0^t\intO i(x,z)dxdz = \intO (s_0(x) + i_0(x) + r_0(x))dx. 
	\end{equation}
	Due to the non-negativity of $i$, it follows that
		$\int_0^\infty \|i(z)\|_{\LO{1}}dz < +\infty$,
	which implies
	\begin{equation*}
		\lim_{t\to\infty}\int_t^{t+1}\|i(z)\|_{\LO{1}}dz = 0.
	\end{equation*}
	By integrating the equations of $r$ and $b$ on $\Omega\times(0,t)$, we obtain easily
	\begin{equation*}
		\lim_{t\to\infty}\int_t^{t+1}\|r(z)\|_{\LO{1}}dz = 0\quad \text{ and }\quad \lim_{t\to\infty}\int_t^{t+1}\|b(z)\|_{\LO{1}}dz = 0.
	\end{equation*}
	Thanks to the bound in $\LO{\infty}$-norm \eqref{irs_bound} and the inequality $\|f\|_{\LO{p}}^p \leq \|f\|_{\LO{\infty}}^{p-1}\|f\|_{\LO{1}}$ we get for any $p>1$
	\begin{equation}\label{integration_Lp_decay}
		\lim_{t\to\infty}\int_t^{t+1}\bra{\|i(z)\|_{\LO{p}}^p + \|r(z)\|_{\LO{p}}^p + \|b(z)\|_{\LO{p}}^p}dz = 0.
	\end{equation}
	We now show the decay of $b$ in $\LO{p}$-norm for $p>1$. For $\tau\in \mathbb N$, recall the time cut-off function $\varphi_\tau\in C^\infty(\R)$ satisfying $\varphi_\tau(\cdot) = 0$ on $(-\infty,\tau]$ an $\varphi_\tau(\cdot) = 1$ on $[\tau+1,\infty)$. It's clear that $\pt u = (\pt s,\pt i, \pt r, \pt b)$ is a also a weak solution to \eqref{irs_Sys}, which satisfies $\pt y(x,\tau) = 0$ for $y\in \{s,i,r,b\}$. In particular, by taking $p\pt^p b^{p-1}$ as a test function for the equation of $b$ we obtain
	\begin{equation}\label{ee}
	\begin{aligned}
		\|(\pt b)(\tau+1)\|_{\LO{p}}^p + p(p-1)\int_{\tau}^{\tau+1}\intO\pt^pb^{p-2}(D_B(x,z)\na b(x,z))\cdot \na b(x,z) dxdz \\
		- \delta p\int_{\tau}^{\tau+1}\intO \pt^pb(x,z)^pdxdz= -p\int_{\tau}^{\tau+1}\intO c(x,z) \pt^p b^{p-1}\na b dxdz\\
		+p\int_{\tau}^{\tau+1}\intO \pt^p\pt' b^{p}dxdz + p\int_\tau^{\tau+1}\intO\pt^p \phi i b^{p-1}dxdz.
	\end{aligned}
	\end{equation}
	By H\"older's and Young's inequalities, we have
	\begin{align*}
		\left|p\int_{\tau}^{\tau+1}\!\!\intO c\pt^p b^{p-1}\na b dxdz \right|
		\leq p(p-1)\frac{d_*}{2}\int_{\tau}^{\tau+1}\!\!\intO \pt^pb^{p-2}|\na b|^2dxdz + C\int_{\tau}^{\tau+1}\!\!\intO |b|^pdxdz.
	\end{align*}
	Inserting this into \eqref{ee}, using \eqref{D} and H\"older's inequality yield
	\begin{equation*}
		\|b(\tau+1)\|_{\LO{p}}^p \leq C\int_\tau^{\tau+1}\bra{\|i(z)\|_{\LO{p}}^p + \|b(z)\|_{\LO{p}}^p}dz
	\end{equation*}
	where $C$ is independent of $\tau$. Thus, it follows from \eqref{integration_Lp_decay} that $\lim_{t\to\infty}\|b(t)\|_{\LO{p}} = 0$. By using similar arguments, $\lim_{t\to\infty}\|r(t)\|_{\LO{p}} = 0$. 
	For $i$, we have 
	\begin{align*}
		\|i(\tau+1)\|_{\LO{p}}^p &\leq C\int_{\tau}^{\tau+1}\intO\bra{|i(z)|^p + |s(z)||i(z)|^p + |s(z)||b(z)||i(z)|^{p-1}}dxdz\\
		&\leq C\bra{1+\sup_{t\geq 0}\|s(t)\|_{\LO{\infty}}}\int_{\tau}^{\tau+1}\bra{\|i(z)\|_{\LO{p}}^p + \|b(z)\|_{\LO{p}}^p}dz
	\end{align*}
	which implies $\lim_{t\to\infty}\|i(t)\|_{\LO{\infty}} = 0$. It remains to show the asymptotic of $s$. Denote by $\bar{s}(t) = \frac{1}{|\Omega|}\intO s(x,t)dx$. Direct calculations give
	\begin{align*}
		\frac 12\frac{d}{dt}\|s - \bar s\|_{\LO{2}}^2 \leq -d_*\intO |\na s|^2dx + \|s-\bar s\|_{\LO{2}}\|\sigma_I si + \sigma_B sb - \gamma \bar{s}_t\|_{\LO{2}}.
	\end{align*}
	By using Young's inequality, it yields
	\begin{align*}
		&\|s - \bar s\|_{\LO{2}}\|\sigma_I si + \sigma_B sb - \gamma\bar s_t\|_{\LO{2}}\\
		&\leq \frac{d_*C_{\text{PW}}}{2}\|s-\bar s\|_{\LO{2}}^2 + \frac{1}{2d_*C_{\text{PW}}}\|\sigma_I si + \sigma_B sb - \gamma\bar s_t\|_{\LO{2}}^2.
	\end{align*}
	Thus, by the Poincar\'e-Wirtinger inequality $\intO |\na s|^2dx \geq C_{\text{PW}}\|s - \bar s\|_{\LO{2}}^2$, we have
	\begin{equation}\label{ee3}
		\frac{d}{dt}\|s - \bar s\|_{\LO{2}}^2 + d_*C_{\text{PW}}\|s - \bar s\|_{\LO{2}}^2 \leq C\|\sigma_I si + \sigma_B sb - \gamma \bar{s}_t\|_{\LO{2}}^2.
	\end{equation}
	From the uniform bounds of $\sigma_I, \sigma_B, s$, and the decay of $i$ and $b$, we have
	\begin{equation}\label{ee4}
		\lim_{t\to\infty}\|\sigma_I si + \sigma_B sb\|_{\LO{2}} \leq \lim_{t\to\infty}\sigma^*\|s(t)\|_{\LO{\infty}}\bra{\|i(t)\|_{\LO{2}} + \|b(t)\|_{\LO{2}}} = 0.
	\end{equation}
	From the equation of $s$,
	\begin{align*}	
		\left|\bar{s}_t\right| \leq \bra{\sigma^* + \gamma}\bra{\|s\|_{\LO{\infty}}(\|i\|_{\LO{1}} + \|b\|_{\LO{1}}) + \gamma\|r\|_{\LO{1}}},
	\end{align*}
	and thus
	\begin{equation}\label{ee5}
		\|\bar s_t(z)\|_{\LO{2}}^2 = |\Omega||\bar s_t(z)|^2 \xrightarrow{z\to\infty} 0. 
	\end{equation}
	From \eqref{ee4} and \eqref{ee5}, \eqref{ee3} implies that
	\begin{equation}\label{ee6}
		\lim_{t\to\infty}\|s(t) - \bar{s}(t)\|_{\LO{2}}^2 = 0.
	\end{equation}
	Finally, by letting $t\to\infty$ in \eqref{ee0}, it follows
	\begin{equation*}
		\lim_{t\to\infty}\bar{s}(t) = \lim_{t\to\infty}\intO s(x,t)dx = \intO(s_0 + i_0 + r_0)dx - \alpha\int_0^\infty\intO i(x,z)dxdz= s_\infty,
	\end{equation*}
	which, in combination with \eqref{ee6} implies
	\begin{equation*}
		\lim_{t\to\infty}\|s(t)-s_\infty\|_{\LO{2}} = 0.
	\end{equation*}
	The uniform bound of $s$ in $\LO{\infty}$ and interpolation inequality imply the behavior \eqref{decay}.
\end{proof}

\section{Two technical lemmas}\label{sec:technical}
\begin{lemma}\label{Hp-lem7}
	Suppose $m_1\in \mathbb N$, $\theta= (\theta_1,\ldots, \theta_{m})$, where $\theta_1,...,\theta_{m}$ are positive real numbers, $\beta\in \mathbb Z_+^{m}$, and $\H_p[u]$ is defined in (\ref{Hp}). Then
	$$\frac{\partial}{\partial t}\H_0[u](t)=0,\text{\quad  }\frac{\partial}{\partial t}\H_1[u](t)=\sum_{j=1}^{m}\theta_j\frac{\partial}{\partial t}u_j(t),$$  
	and for $p\in\mathbb N$ such that $p\ge 2$, 
	\begin{equation*}
	\frac{\partial}{\partial t}\H_p[u](t) = \sum_{|\beta| = p-1}\begin{pmatrix} p\\ \beta \end{pmatrix} \theta^{\beta^2}u(t)^{\beta}\sum_{j=1}^{m}\theta_j^{2\beta_j+1}\frac{\partial}{\partial t}u_j(t).
	\end{equation*}
\end{lemma}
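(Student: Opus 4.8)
The plan is to differentiate \eqref{Hp} directly via the product rule and then reindex the resulting sum. The cases $p=0$ and $p=1$ are immediate. For $p=0$ the only multi-index with $|\beta|=0$ is $\beta=0$, so $\H_0[u]\equiv\theta^{0}u^{0}=1$ and $\frac{\partial}{\partial t}\H_0[u]=0$. For $p=1$ the multi-indices with $|\beta|=1$ are precisely $e_1,\ldots,e_m$ (with $e_j$ the $j$-th unit vector of $\mathbb Z_+^m$), for which $\binom{1}{\beta}=1$, $\theta^{\beta^2}=\theta_j$ and $u^\beta=u_j$; hence $\H_1[u](t)=\sum_{j=1}^m\theta_j u_j(t)$ and the claimed formula follows.

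For $p\ge 2$, I would begin from the product rule: since $u(t)^\beta=\prod_{i=1}^m u_i(t)^{\beta_i}$, one has $\frac{\partial}{\partial t}u(t)^\beta=\sum_{k:\beta_k\ge 1}\beta_k\,u(t)^{\beta-e_k}\frac{\partial}{\partial t}u_k(t)$, so after interchanging finite sums,
\[
\frac{\partial}{\partial t}\H_p[u](t)=\sum_{k=1}^m\frac{\partial}{\partial t}u_k(t)\sum_{\substack{|\beta|=p\\ \beta_k\ge 1}}\binom{p}{\beta}\beta_k\,\theta^{\beta^2}\,u(t)^{\beta-e_k}.
\]
The decisive step is the change of summation variable $\gamma=\beta-e_k$, which is a bijection from $\{\beta\in\mathbb Z_+^m:|\beta|=p,\ \beta_k\ge 1\}$ onto $\{\gamma\in\mathbb Z_+^m:|\gamma|=p-1\}$, together with the three elementary identities obtained from $\beta_k=\gamma_k+1$: first, $\binom{p}{\beta}\beta_k=\dfrac{p!}{\gamma_k!\prod_{i\ne k}\gamma_i!}=\binom{p}{\gamma}$ (in the generalized sense $\binom{p}{\gamma}=p!/(\gamma_1!\cdots\gamma_m!)$, which is meaningful even though now $|\gamma|=p-1$); second, $\theta^{\beta^2}=\theta_k^{\gamma_k^2+2\gamma_k+1}\prod_{i\ne k}\theta_i^{\gamma_i^2}=\theta_k^{2\gamma_k+1}\theta^{\gamma^2}$; and third, $u(t)^{\beta-e_k}=u(t)^\gamma$. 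Substituting these and swapping the order of summation back gives $\frac{\partial}{\partial t}\H_p[u](t)=\sum_{|\gamma|=p-1}\binom{p}{\gamma}\theta^{\gamma^2}u(t)^\gamma\sum_{k=1}^m\theta_k^{2\gamma_k+1}\frac{\partial}{\partial t}u_k(t)$, which is the asserted identity after renaming $\gamma$ as $\beta$ and $k$ as $j$.

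I do not expect a genuine obstacle here; the only point requiring attention is the bookkeeping in the reindexing — verifying that the factor $\beta_k$ converts the multinomial symbol $\binom{p}{\beta}$ into $\binom{p}{\gamma}$ and that the exponent $\beta_k^2$ of $\theta_k$ splits off exactly the factor $\theta_k^{2\gamma_k+1}$. Once these two identities are checked, the remainder is a routine interchange of finite sums, and one may note in passing that the same computation also reproduces the $p=1$ formula under the convention that the sum over $|\gamma|=0$ reduces to the single term $\gamma=0$.
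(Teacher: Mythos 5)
Your argument is correct, and it is a genuinely different (and more direct) route than the paper's. You differentiate $\H_p[u]$ term by term with the product rule and then reindex via $\gamma=\beta-e_k$, checking the two bookkeeping identities $\binom{p}{\beta}\beta_k=\binom{p}{\gamma}$ (with the symbol understood as $p!/(\gamma_1!\cdots\gamma_m!)$, exactly the convention of \eqref{our-def}, which is also how the lemma's statement uses it for $|\beta|=p-1$) and $\theta^{\beta^2}=\theta_k^{2\gamma_k+1}\theta^{\gamma^2}$; both are verified correctly, the map $\beta\mapsto\beta-e_k$ is indeed a bijection onto $\{|\gamma|=p-1\}$, and the interchange of finite sums is harmless. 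The paper instead proceeds by induction on the number of components $m$: it splits off $u_1$, rewrites $\H_p[u]$ as a sum over $\beta_1$ of $\H_{p-\beta_1}[\tilde u]$-type expressions, and invokes the induction hypothesis for the $(m-1)$-component function, reassembling the pieces at the end. Your direct reindexing is shorter and avoids induction altogether; the paper's inductive decomposition has the advantage of setting up the same splitting machinery that is then reused in the harder companion result, Lemma \ref{Hp-lem8}, where the gradient terms make a one-line reindexing less transparent. Either proof fully establishes the lemma, including the $p=0,1$ cases, which you handle correctly as degenerate instances.
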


\begin{proof}
	The results for $\H_0[u](t)$ and $\H_1[u](t)$ are trivial. The same is true for the case when $m=1$. Suppose $p\ge 2$ and $m_1\ge 2$. We proceed by induction on the value $m$, and assume $k\in \mathbb N$ such that the result is true for $m=k$. Suppose $m=k+1$ and denote
	$$\tilde\beta=(\beta_2,...,\beta_{m})\text{ and }\tilde u=(u_2,...,u_{m}).$$
	Then we can rewrite $\H_p[u]$ as
	\begin{equation}\label{Hp-eq1}
	\H_p[u] = \sum_{\beta_1=0}^p\frac{1}{\beta_1!}\theta_1^{\beta_1^2}u_1^{\beta_1}\sum_{|\tilde\beta|=p-\beta_1}\begin{pmatrix}
	p\\ \tilde\beta\end{pmatrix}\tilde\theta^{\tilde\beta^2}\tilde u^{\tilde\beta}.
	\end{equation}
	Consequently,
	\begin{align}\label{Hp-eq2}
	\frac{\partial}{\partial t}\H_p[u] &= \sum_{\beta_1=1}^p\frac{1}{\beta_1!}\theta_1^{\beta_1^2}\beta_1u_1^{\beta_1-1}\frac{\partial}{\partial t}u_1\sum_{|\tilde\beta|=p-\beta_1}\begin{pmatrix}
	p\\ \tilde\beta\end{pmatrix}\tilde\theta^{\tilde\beta^2}\tilde u^{\tilde\beta}\nonumber\\
	&+\sum_{\beta_1=0}^p\frac{1}{\beta_1!}\theta_1^{\beta_1^2}u_1^{\beta_1}\frac{\partial}{\partial t}\left(\sum_{|\tilde\beta|=p-\beta_1}\begin{pmatrix}
	p\\ \tilde\beta\end{pmatrix}\tilde\theta^{\tilde\beta^2}\tilde u^{\tilde\beta}\right)\nonumber\\
	&=\sum_{\beta_1=1}^p\frac{1}{\beta_1!}\theta_1^{\beta_1^2}\beta_1u_1^{\beta_1-1}\frac{\partial}{\partial t}u_1\sum_{|\tilde\beta|=p-\beta_1}\begin{pmatrix}
	p\\ \tilde\beta\end{pmatrix}\tilde\theta^{\tilde\beta^2}\tilde u^{\tilde\beta}\nonumber\\
	&+\sum_{\beta_1=0}^{p-1}\frac{1}{\beta_1!}\theta_1^{\beta_1^2}u_1^{\beta_1}\frac{p!}{(p-\beta_1)!}\frac{\partial}{\partial t}H_{p-\beta_1}[\tilde u].
	\end{align}
	Now, from our induction hypothesis, 
	\begin{equation}\label{Hp-eq3}
	\frac{\partial}{\partial t}\H_{p-\beta_1}[\tilde u] = \sum_{|\tilde\beta| = p-\beta_1-1}\begin{pmatrix} p-\beta_1\\ \tilde\beta \end{pmatrix} \tilde\theta^{\tilde\beta^2}\tilde u^{\tilde\beta}\sum_{j=1}^{m-1}\tilde\theta_j^{2\tilde\beta_j+1}\frac{\partial}{\partial t}\tilde u_j.
	\end{equation}
	Therefore, substituting (\ref{Hp-eq3}) into (\ref{Hp-eq2}), and noting that $\tilde u_j=u_{j+1}$ and $\tilde\theta_j=\theta_{j+1}$, gives 
	\begin{align}\label{Hp-eq4}
	\frac{\partial}{\partial t}\H_p[u] &=\sum_{\beta_1=1}^p\frac{1}{\beta_1!}\theta_1^{\beta_1^2}\beta_1u_1^{\beta_1-1}\frac{\partial}{\partial t}u_1\sum_{|\tilde\beta|=p-\beta_1}\begin{pmatrix}
	p\\ \tilde\beta\end{pmatrix}\tilde\theta^{\tilde\beta^2}\tilde u^{\tilde\beta}\nonumber\\
	&+\sum_{\beta_1=0}^{p-1}\frac{1}{\beta_1!}\theta_1^{\beta_1^2}u_1^{\beta_1}\frac{p!}{(p-\beta_1)!}\sum_{|\tilde\beta| = p-\beta_1-1}\begin{pmatrix} p-\beta_1\\ \tilde\beta \end{pmatrix} \tilde\theta^{\tilde\beta^2}\tilde u^{\tilde\beta}\sum_{j=1}^{m-1}\tilde\theta_j^{2\tilde\beta_j+1}\frac{\partial}{\partial t}\tilde u_j\nonumber\\
	&=\sum_{\beta_1=0}^{p-1}\frac{1}{\beta_1!}\theta_1^{(\beta_1+1)^2}u_1^{\beta_1}\frac{\partial}{\partial t}u_1\sum_{|\tilde\beta|=p-\beta_1-1}\begin{pmatrix}
	p\\ \tilde\beta\end{pmatrix}\tilde\theta^{\tilde\beta^2}\tilde u^{\tilde\beta}\nonumber\\
	&+\sum_{\beta_1=0}^{p-1}\frac{1}{\beta_1!}\theta_1^{\beta_1^2}u_1^{\beta_1}\frac{p!}{(p-\beta_1)!}\sum_{|\tilde\beta| = p-\beta_1-1}\begin{pmatrix} p-\beta_1\\ \tilde\beta \end{pmatrix} \tilde\theta^{\tilde\beta^2}\tilde u^{\tilde\beta}\sum_{j=1}^{m-1}\tilde\theta_j^{2\tilde\beta_j+1}\frac{\partial}{\partial t}\tilde u_j\nonumber\\
	&=\sum_{|\beta|=p-1}\begin{pmatrix}p\\\beta\end{pmatrix}\theta^{\beta^2}u^{\beta}\theta_1^{2\beta_1+1}\frac{\partial}{\partial t}u_1
	+\sum_{|\beta|=p-1}\begin{pmatrix}p\\\beta\end{pmatrix}\theta^{\beta^2}u^{\beta}\sum_{j=2}^{m}\theta_j^{2\beta_j+1}\frac{\partial}{\partial t}u_j\nonumber\\
	&=\sum_{|\beta|=p-1}\begin{pmatrix}p\\\beta\end{pmatrix}\theta^{\beta^2}u^{\beta}\sum_{j=1}^{m}\theta_j^{2\beta_j+1}\frac{\partial}{\partial t}u_j.
	\end{align}\end{proof}

We also need a second technical result. We have recently proved this in the far simpler case when $(a_{i,j}^k)=d_kI_{n\times n}$ in \cite{morgan2021global}. The general case is handled below.

\begin{lemma}\label{Hp-lem8}
	Suppose $m\in \mathbb N$, $\theta= (\theta_1,\ldots, \theta_{m})$, where $\theta_1,...,\theta_{m}$ are positive real numbers, and let $\H_p[u]$ be defined in (\ref{Hp}). If $p\in\mathbb N$ such that $p\ge 2$, then
	$$\sum_{|\beta|=p-1}\begin{pmatrix}p\\ \beta\end{pmatrix}\theta^{\beta^2}\sum_{k=1}^{m}\theta_k^{2\beta_k+1}\left(A_k\nabla u_k\right)\cdot\nabla u^\beta =\sum_{|\beta|=p-2}\left(\begin{array}{c}
	p\\
	\beta
	\end{array}\right)\theta^{\beta^{2}}u^{\beta}\sum_{k=1}^{m}\sum_{r=1}^{m}C_{k,r}(\beta)\left(A_k\nabla u_k\right)\cdot\nabla u_r,$$
	where 
	\[
	C_{k,r}(\beta)=\begin{cases}
	\begin{array}{cc}
	\theta_{k}^{2\beta_{k}+1}\theta_{r}^{2\beta_{r}+1}, & k\ne r,\\
	\theta_{k}^{4\beta_{k}+4}, & k=r.
	\end{array}\end{cases}
	\]
\end{lemma}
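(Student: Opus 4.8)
The plan is to establish this purely algebraic pointwise identity by applying the chain rule to $\nabla u^{\beta}$ and then carefully re-indexing the resulting double sum. First I would use
\begin{equation*}
\nabla u^{\beta} = \sum_{r=1}^{m}\beta_r\, u^{\beta - e_r}\,\nabla u_r,
\end{equation*}
where $e_r\in\mathbb{Z}_+^{m}$ denotes the multi-index with a $1$ in position $r$ and $0$ elsewhere, and where the terms with $\beta_r=0$ are understood to vanish; thus the $r$-summation effectively runs over indices with $\beta_r\ge 1$. Substituting this into the left-hand side of the claimed identity, the quantity becomes a sum over $|\beta|=p-1$, $k$, and $r$ (with $\beta_r\ge 1$) of the terms
\begin{equation*}
\binom{p}{\beta}\theta^{\beta^{2}}\,\theta_k^{2\beta_k+1}\,\beta_r\, u^{\beta-e_r}\,(A_k\nabla u_k)\cdot\nabla u_r .
\end{equation*}

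Next, for each fixed $r$ I would perform the change of summation index $\gamma=\beta-e_r$, which is a bijection from $\{\beta\in\mathbb{Z}_+^{m}:|\beta|=p-1,\ \beta_r\ge 1\}$ onto $\{\gamma\in\mathbb{Z}_+^{m}:|\gamma|=p-2\}$. Three elementary identities take care of the bookkeeping: (i) since $\beta_r/\beta_r!=1/(\beta_r-1)!=1/\gamma_r!$ and $\beta_s!=\gamma_s!$ for $s\ne r$, one has $\binom{p}{\beta}\beta_r=\binom{p}{\gamma}$; (ii) since $(\gamma_r+1)^2=\gamma_r^2+2\gamma_r+1$ while all other components of $\beta^{2}$ and $\gamma^{2}$ coincide, one has $\theta^{\beta^{2}}=\theta_r^{2\gamma_r+1}\,\theta^{\gamma^{2}}$; and (iii) $u^{\beta-e_r}=u^{\gamma}$. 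After this change of variables (and writing $\beta=\gamma+e_r$ in the remaining factor $\theta_k^{2\beta_k+1}$) the left-hand side equals
\begin{equation*}
\sum_{|\gamma|=p-2}\binom{p}{\gamma}\theta^{\gamma^{2}}u^{\gamma}\sum_{k=1}^{m}\sum_{r=1}^{m}\theta_r^{2\gamma_r+1}\,\theta_k^{2(\gamma+e_r)_k+1}\,(A_k\nabla u_k)\cdot\nabla u_r .
\end{equation*}

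Finally I would evaluate the coefficient $\theta_r^{2\gamma_r+1}\theta_k^{2(\gamma+e_r)_k+1}$ in the two cases: for $k\ne r$ it is $\theta_r^{2\gamma_r+1}\theta_k^{2\gamma_k+1}$, and for $k=r$ it is $\theta_r^{2\gamma_r+1}\theta_r^{2\gamma_r+3}=\theta_r^{4\gamma_r+4}$, which are exactly the two branches of $C_{k,r}(\gamma)$ from the statement. Renaming $\gamma$ back to $\beta$ then gives the claimed formula. I do not expect a genuine obstacle here, since the identity is algebraic and holds pointwise in $x$; the only point demanding care is the re-indexing, in particular observing that the vanishing of the $\beta_r=0$ contributions is precisely what allows the restriction to $\beta_r\ge 1$ before the shift, so that $\gamma=\beta-e_r$ always has non-negative components.
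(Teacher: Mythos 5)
Your proposal is correct, and it is a genuinely different (and more direct) argument than the one in the paper. The paper proves Lemma \ref{Hp-lem8} by induction on the number of components $m$: it splits $u=(u_1,\tilde u)$ and $\beta=(\beta_1,\tilde\beta)$, decomposes the left-hand side into two pieces $\mathrm{I}+\mathrm{II}$, handles the terms involving $\nabla u_1$ by an explicit shift in $\beta_1$, and invokes the induction hypothesis for the block in the remaining variables, mirroring the inductive structure already used for Lemma \ref{Hp-lem7}. You instead avoid induction entirely: you expand $\nabla u^{\beta}=\sum_r \beta_r u^{\beta-e_r}\nabla u_r$, re-index via the bijection $\beta\mapsto\gamma=\beta-e_r$ between $\{|\beta|=p-1,\ \beta_r\ge 1\}$ and $\{|\gamma|=p-2\}$, and track the three bookkeeping identities $\binom{p}{\beta}\beta_r=\binom{p}{\gamma}$, $\theta^{\beta^2}=\theta_r^{2\gamma_r+1}\theta^{\gamma^2}$, and $u^{\beta-e_r}=u^{\gamma}$; the case split $k\ne r$ versus $k=r$ then reproduces exactly the two branches of $C_{k,r}$, including the exponent $4\gamma_r+4$ from $\theta_r^{2\gamma_r+1}\theta_r^{2\gamma_r+3}$. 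All of these identities check out (the vanishing of the $\beta_r=0$ terms is precisely what keeps $\gamma\in\mathbb{Z}_+^m$), so your proof is complete. What each approach buys: yours is shorter, treats all components symmetrically, and makes transparent where the coefficients $C_{k,r}$ come from; the paper's inductive route is longer but runs in parallel with its proof of Lemma \ref{Hp-lem7}, so the two lemmas share one organizational template.
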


\begin{proof}The result is easily verified when $m_1=1$, regardless of the choice of $p$, and for $p=2$, regardless of the choice of $m_1$. Suppose $p\ge 2$ and $m_1\ge 2$. We proceed by induction on the value $m_1$, and assume $k\in \mathbb N$ such that the result is is true for $m_1=k$. Suppose $m_1=k+1$ and (as in the proof of Lemma \ref{Hp-lem7}) denote
	$$\tilde\beta=(\beta_2,...,\beta_{m_1})\text{ and }\tilde u=(u_2,...,u_{m_1}).$$
	Also, whenever $w\in\mathbb{R}^n$ and $k\in\{1,...,n\}$, we denote entry $k$ in $w$ by $w_k$.
	Then
	\begin{align}\label{Hp-eq5}
	&\sum_{|\beta|=p-1}\begin{pmatrix}p\\ \beta\end{pmatrix}\theta^{\beta^2}\sum_{i=1}^{m_1}\theta_i^{2\beta_i+1}\left(A_i\nabla u_i\right)\cdot\nabla u^\beta \nonumber\\
	&=\sum_{\beta_1=0}^{p-1}\sum_{|\tilde\beta|=p-\beta_1-1}\begin{pmatrix}p\\ \beta\end{pmatrix}\theta_1^{\beta_1^2}\tilde\theta^{\tilde\beta^2}\left[\theta_1^{2\beta_1+1}\left(A_1\nabla u_1\right)\cdot\nabla u^\beta+\sum_{i=1}^{m_1-1}\tilde\theta_i^{2\tilde\beta_i+1}\left(A_{i+1}\nabla\tilde u_i\right)\cdot\nabla\left(u_1^{\beta_1}\tilde u^{\tilde\beta}\right)\right].
	\end{align}
	Note that for $1\le \beta_1\le p-2$ and $|\tilde\beta|=p-\beta_1-1$
	\begin{align}\label{Hp-eq6}
	\nabla\left(u_1^{\beta_1}\tilde u^{\tilde\beta}\right)=\beta_1 u_1^{\beta_1-1}\tilde u^{\tilde\beta}\nabla u_1+\sum_{j=1,\tilde\beta_j\ne 0}^{m_1-1}\tilde\beta_j u_1^{\beta_1} \tilde u^{\tilde\beta-e_j}\nabla \tilde u_j.
	\end{align}
	Therefore, from (\ref{Hp-eq5}) and (\ref{Hp-eq6}), we have
	\begin{align}\label{Hp-eq7}
	\sum_{|\beta|=p-1}\begin{pmatrix}p\\ \beta\end{pmatrix}\theta^{\beta^2}\sum_{i=1}^{m_1}\theta_i^{2\beta_i+1}\left(A_i\nabla u_i\right)\cdot\nabla u^\beta=\text{I}+\text{II},
	\end{align}
	where
	\begin{align}\label{Hp-eq8}
	\text{I}=\sum_{\beta_1=0}^{p-1}\sum_{|\tilde\beta|=p-\beta_1-1}\begin{pmatrix}p\\\beta\end{pmatrix}\theta_1^{\beta_1^2}\tilde\theta^{\tilde\beta^2} &\bigg[\theta_1^{2\beta_1+1}\left(A_1\nabla u_1\right)\cdot \nabla u^{\beta}\nonumber\\
	&\quad+\sum_{i=1,\beta_1\ne 0}^{m_1-1}\tilde\theta_i^{2\tilde\beta_i+1}\left(A_{i+1}\nabla \tilde u_i\right)\cdot\beta_1 u_1^{\beta_1-1}\tilde u^{\tilde\beta}\nabla u_1\bigg]
	\end{align}
	and
	\begin{align}\label{Hp-eq9}
	\text{II}=\sum_{\beta_1=0}^{p-2}\sum_{|\tilde\beta|=p-\beta_1-1}\begin{pmatrix}p\\\beta\end{pmatrix}\theta_1^{\beta_1^2}\tilde\theta^{\tilde\beta^2}\sum_{i,j=1,\tilde\beta_j\ne 0}^{m_1-1}\tilde\theta_i^{2\tilde\beta_i+1}\left(A_{i+1}\nabla \tilde u_i\right)\cdot\tilde\beta_ju_1^{\beta_1}\tilde u^{\tilde\beta-e_j}\nabla \tilde u_j.
	\end{align}
	Above, $e_j$ denotes row $j$ of the $(m_1-1)\times (m_1-1)$ identity matrix. We start with the analysis of II. We can rewrite
	\begin{align}\label{Hp-eq10}
	\text{II}&=\sum_{\beta_1=0}^{p-2}\frac{1}{\beta_1!}\theta_1^{\beta_1^2}u_1^{\beta_1}\sum_{|\tilde\beta|=p-\beta_1-1}\begin{pmatrix}p\\\tilde\beta\end{pmatrix}\tilde\theta^{\tilde\beta^2}\sum_{i,j=1,\tilde\beta_j\ne 0}^{m_1-1}\tilde\theta_i^{2\tilde\beta_i+1}\left(A_{i+1}\nabla \tilde u_i\right)\cdot\tilde\beta_j\tilde u^{\tilde\beta-e_j}\nabla \tilde u_j\nonumber\\
	&=\sum_{\beta_1=0}^{p-2}\frac{1}{\beta_1!}\theta_1^{\beta_1^2}u_1^{\beta_1}\sum_{|\tilde\beta|=p-\beta_1-1}\begin{pmatrix}p\\\tilde\beta\end{pmatrix}\tilde\theta^{\tilde\beta^2}\sum_{i,j=1,\tilde\beta_j\ne 0}^{m_1-1}\tilde\theta_i^{2\tilde\beta_i+1}\left(A_{i+1}\nabla \tilde u_i\right)\cdot \nabla\tilde u^{\tilde\beta}\nonumber\\
	&=\sum_{\beta_1=0}^{p-2}\frac{1}{\beta_1!}\theta_1^{\beta_1^2}u_1^{\beta_1}\frac{p!}{(p-\beta_1)!}\sum_{|\tilde\beta|=p-\beta_1-1}\begin{pmatrix}p-\beta_1\\\tilde\beta\end{pmatrix}\tilde\theta^{\tilde\beta^2}\sum_{i,j=1,\tilde\beta_j\ne 0}^{m_1-1}\tilde\theta_i^{2\tilde\beta_i+1}\left(A_{i+1}\nabla \tilde u_i\right)\cdot \nabla\tilde u^{\tilde\beta}\nonumber\\
	&=\sum_{\beta_1=0}^{p-2}\frac{1}{\beta_1!}\theta_1^{\beta_1^2}u_1^{\beta_1}\frac{p!}{(p-\beta_1)!}\sum_{|\tilde\beta|=p-\beta_1-2}\begin{pmatrix}p-\beta_1\\ \tilde\beta\end{pmatrix}\tilde\theta^{\tilde\beta^2}\tilde u^{\tilde\beta}\sum_{i,j=1}^{m_1-1} C_{i+1,j+1}\left(A_{i+1}\nabla \tilde u_i\right)\cdot\nabla \tilde u_j\nonumber\\
	&=\sum_{|\beta|=p-2}\begin{pmatrix}p\\\beta\end{pmatrix}\theta^{\beta^2}u^\beta\sum_{i,j=2}^{m_1}C_{i,j}\left(A_i\nabla u_i\right)\cdot\nabla u_j,
	\end{align}
	where the last step follows from the induction hypothesis. Now let's investigate I. We begin by expanding the $\nabla u^\beta$ term to find
	\begin{align}\label{Hp-eq11}
	\text{I}=&\sum_{\beta_1=0}^{p-1}\sum_{|\tilde\beta|=p-\beta_1-1}\begin{pmatrix}p\\\beta\end{pmatrix}\theta_1^{\beta_1^2}\tilde\theta^{\tilde\beta^2}\biggl[\theta_1^{2\beta_1+1}\left(A_1\nabla u_1\right)\cdot \bigg(\beta_1u_1^{\beta_1-1}\tilde u^{\tilde\beta}\nabla u_1+\sum_{i=1}^{m_1-1}\tilde\beta_iu_1^{\beta_1}\tilde u^{\tilde\beta-e_i}\nabla \tilde u_i\bigg)\nonumber\\
	&\qquad\qquad +\sum_{i=1,\beta_1\ne 0}^{m_1-1}\tilde\theta_i^{2\tilde\beta_i+1}\left(A_{i+1}\nabla \tilde u_i\right)\cdot\beta_1 u_1^{\beta_1-1}\tilde u^{\tilde\beta}\nabla u_1\biggr]=:\text{I}_{1,1}+\sum_{i=2}^{m_1}\text{I}_{i,1},
	\end{align}
	where 
	\begin{align}\label{Hp-eq12}
	\text{I}_{1,1}&=\sum_{\beta_1=1}^{p-1}\sum_{|\tilde\beta|=p-\beta_1-1}\begin{pmatrix}p\\\beta\end{pmatrix}\theta_1^{\beta_1^2}\tilde\theta^{\tilde\beta^2}\sum_{l=1}^n \theta_1^{2\beta_1+1}\beta_1u_1^{\beta_1-1}\tilde u^{\tilde\beta}\left(A_1\nabla u_1\right)_l\left(\frac{\partial u_1}{\partial x_l}\right)\nonumber\\
	&=\sum_{\beta_1=1}^{p-1}\sum_{|\tilde\beta|=p-(\beta_1-1)-2}\begin{pmatrix}p\\ (\beta_1-1,\tilde\beta) \end{pmatrix}\theta^{(\beta_1-1,\tilde\beta)^2} u^{(\beta_1-1,\tilde\beta)} \sum_{l=1}^n \theta_1^{4(\beta_1-1)+4}\left(A_1\nabla u_1\right)_l\left(\frac{\partial u_1}{\partial x_l}\right)\nonumber\\
	&=\sum_{\beta_1=0}^{p-2}\sum_{|\tilde\beta|=p-\beta_1-2}\begin{pmatrix}p\\ \beta \end{pmatrix}\theta^{\beta^2} u^{\beta} \sum_{l=1}^n \theta_1^{4\beta_1+4}\left(A_1\nabla u_1\right)_l\left(\frac{\partial u_1}{\partial x_l}\right)\nonumber\\
	&=\sum_{|\beta|=p-2}\begin{pmatrix}p\\ \beta \end{pmatrix}\theta^{\beta^2} u^{\beta} \sum_{l=1}^n \theta_1^{4\beta_1+4}\left(A_1\nabla u_1\right)_l\left(\frac{\partial u_1}{\partial x_l}\right),
	\end{align}
	and for $i\in\{2,...,m_1\}$,
	\begin{align}\label{Hp-eq13}
	\text{I}_{i,1}&=\sum_{\beta_1=0}^{p-1}\sum_{|\tilde\beta|=p-\beta_1-1}\begin{pmatrix}p\\\beta\end{pmatrix}\theta^{\beta^2}\sum_{l=1}^n \left[ \theta_1^{2\beta_1+1}\tilde\beta_{i-1}u_1^{\beta_1}\tilde u^{\tilde\beta-e_{i-1}}\left(A_1\nabla u_1\right)_l\frac{\partial u_i}{\partial x_l}\right.\nonumber\\
	&\quad \left. +\tilde\theta_{i-1}^{2\tilde\beta_{i-1}+1}\beta_1u_1^{\beta_1-1}\tilde u^{\tilde\beta}\frac{\partial u_1}{\partial x_l}\left(A_i\nabla u_i\right)_l\right]\nonumber\\
	&=\sum_{\beta_1=0}^{p-2}\sum_{|\tilde\beta|=p-\beta_1-1,\tilde\beta_{i-1}\ne 0}\begin{pmatrix}p\\(\beta_1,\tilde\beta-e_{i-1})\end{pmatrix}\theta^{\beta^2}\sum_{l=1}^n\theta_1^{2\beta_1+1}u_1^{\beta_1}\tilde u^{\tilde\beta-e_{i-1}}\left(A_1\nabla u_1\right)_l\frac{\partial u_i}{\partial x_l}\nonumber\\
	&\quad +\sum_{\beta_1=1}^{p-1}\sum_{|\tilde\beta|=p-\beta_1-1}\begin{pmatrix}p\\(\beta_1-1,\tilde\beta)\end{pmatrix}\theta^{\beta^2}\sum_{l=1}^n\theta_i^{2\beta_i+1}u_1^{\beta_1-1}\tilde u^{\tilde\beta}\frac{\partial u_1}{\partial x_l}\left(A_i\nabla u_i\right)_l\nonumber\\
	&=\sum_{|\beta|=p-2}\begin{pmatrix}p\\\beta\end{pmatrix}\theta^{\beta^2}u^\beta\sum_{l=1}^n\theta_1^{2\beta_1+1}\theta_i^{2\beta_i+1}\left(A_1\nabla u_1\right)_l\frac{\partial u_i}{\partial x_l}\nonumber\\
	&\quad +\sum_{|\beta|=p-2}\begin{pmatrix}p\\\beta\end{pmatrix}\theta^{\beta^2}u^\beta\sum_{l=1}^n\theta_1^{2\beta_1+1}\theta_i^{2\beta_i+1}\left(A_i\nabla u_i\right)_l\frac{\partial u_1}{\partial x_l}.
	\end{align}
	The result follows by combining (\ref{Hp-eq7}), (\ref{Hp-eq8}), (\ref{Hp-eq10}), (\ref{Hp-eq11}), (\ref{Hp-eq12}) and (\ref{Hp-eq13}). 
\end{proof}

\medskip
\noindent{\bf Acknowledgements:} We would like to thank the referees for their comments and suggestions, which improve the presentation of this paper.

The second author is partially supported by NAWI Graz.

\end{document}